\documentclass[a4paper,UKenglish,cleveref,autoref,thm-restate]{lipics-v2021}
\usepackage{amsmath,amssymb,amsthm}
\usepackage{mathtools}
\usepackage{thmtools}
\usepackage{thm-restate}
\usepackage{hyperref}
\usepackage{cleveref}
\usepackage[ruled]{algorithm}
\usepackage[noend]{algpseudocode}
\usepackage{enumitem}
\usepackage{amsfonts}
\usepackage{amsmath} 
\usepackage{amssymb}
\usepackage{enumerate}

\newcommand{\PROOF}{\begin{proof}}

\newcommand{\QED}{\end{proof}}

\newcommand{\floor}[1]{ \left\lfloor #1 \right\rfloor }

\newcommand{\diam}{\mathrm{diam}}

\newcommand{\N}{\mathbb{N}}

\newcommand{\Q}{\mathbb{Q}}
\newcommand{\R}{\mathbb{R}}

\renewcommand{\dim}{{\mathrm{dim}}}

\newcommand{\Dim}{{\mathrm{Dim}}}

\newcommand\catenate{\mathbin{\text{\ttfamily\upshape ++}}}

\title{On the Constructive Dimension Spectrum of Polynomials} 

\titlerunning{On the Constructive Dimension Spectrum of Polynomials} 

 \author{Prajval Koul}{Department of Computer Science and Engineering, Indian Institute of Technology Kanpur, India. \and \url{https://prajvalkoul.github.io/}}{prajvalk21@iitk.ac.in}{https://orcid.org/0000-0002-8879-8330}{}

 \author{Satyadev Nandakumar}{Department of Computer Science and Engineering, Indian Institute of Technology Kanpur, Kanpur, Uttar Pradesh, India. \and \url{https://www.cse.iitk.ac.in/users/satyadev/} }{satyadev@cse.iitk.ac.in}{https://orcid.org/0000-0002-0214-0598}{}

 \authorrunning{P. Koul and S. Nandakumar} 

 \Copyright{Prajval Koul and Satyadev Nandakumar} 

\ccsdesc[500]{Theory of computation.}

\keywords{Kolmogorov Complexity, Dimension,
  Polynomials.} 

\category{} 

\relatedversion{} 

\nolinenumbers 

\newcommand{\snt}[1]{\tilde{#1}}

\EventEditors{Sayan Bhattacharya, Danupon Nanongkai, Michael Benedikt, and Gabriele Puppis}
\EventNoEds{4}
\EventLongTitle{53rd International Colloquium on Automata, Languages, and Programming (ICALP 2026)}
\EventShortTitle{ICALP 2026}
\EventAcronym{ICALP}
\EventYear{2026}
\EventDate{July 7--10, 2026}
\EventLocation{Royal Holloway, University of London, Egham, United Kingdom}
\EventLogo{}
\SeriesVolume{374}
\ArticleNo{180}

\begin{document}
\maketitle

\begin{abstract}
Recently, Stull \cite{Stull2025}, \cite{Stull2022} resolved a long-standing open problem posed by Lutz, on whether the set of effective Hausdorff dimensions of points on a straight line in $\R^2$ - the effective dimension spectrum of the line - contains a unit interval. This question is related to problems in classical fractal geometry like the Kakeya conjecture and Furstenberg sets. Stull posed an open question on the dimension spectra of polynomial curves. 
  
For the first result, with new techniques which adapt the theory of classical real root-finding of polynomials to the current setting, we show that the dimension spectra of every polynomial curve contains at least two points. This answers an open question posed by Stull \cite{Stull2025}, \cite{Stull2022}. We use the main result to construct a class of polynomials which have width strictly greater than 1, answering a second problem stated in \cite{Stull2025},\cite{Stull2022}. 
  
Stull \cite{Stull2025} resolved the dimension spectrum conjecture for planar lines, showing that it contains a unit interval. For the second result, we resolve the conjecture for a subfamily of polynomials whose coefficients form a "low" dimension point in $\mathbb{R}^{d+1}$. 
\end{abstract}


\section{Introduction}
The theory of effective Hausdorff dimension was introduced by J. Lutz in 2000 \cite{Lutz2000b}, \cite{Lutz2003b} initially over the Cantor Space of infinite binary sequences as a tool to study the relationships between complexity classes \cite{Lutz2003}. Subsequent works have adapted the theory to study the effective Hausdorff dimensions and points in Euclidean space and in general metric spaces (\cite{Lutz2008a}, \cite{Lutz2008c}, \cite{Turetsky2011}, \cite{Lutz2018}, \cite{Lutz2018c}, \cite{Stull2020}, \cite{Stull2022}, \cite{Lutz2023a}, \cite{Mayordomo2018}). A major highlight of this theory is the ``point-to-set principle'', allowing classical Hausdorff dimensions to be derived using effective pointwise arguments. A central question arising in this setting concerns the effective dimension \emph{spectrum} of sets in the Euclidean plane: what values are attained by the effective dimensions of points inside sets in $\R^2$? 

In a recent work, Stull \cite{Stull2022}, extending an earlier work by N. Lutz and Stull \cite{Lutz2020e}, settled a long-standing conjecture of J. Lutz showing that the dimension spectrum of \emph{every} line in $\R^2$ contains a unit interval. Stull \cite{Stull2022} proposes extending the study to the dimensions of points along polynomial curves. 

In this work, we resolve Stull's problem for every univariate
polynomial with real coefficients. We show that its dimension spectrum of every polynomial contains at least two points. Further, we show that the dimension spectrum of every polynomial contains unit-length interval when the coefficients have low dimension. Our first main result is the following.
\begin{restatable}[]{thm}{maintheorem}\label{thm:d_degree_bound}
  For $(a_0,\dots,a_d)\in\R^{d+1}$, $x\in\R$, we have
  \begin{align}
    \label{ineq:degree_d}
    \dim\left(x,\sum_{i=0}^d a_i x^i\right)
    \ge
    \dim\left(x\mid a_0,\dots,a_d\right)+
    \min\left\{\dim\left(a_0,\dots,a_d\right),\dim^{a_0,\dots,a_d}(x)\right\}.
  \end{align}
\end{restatable}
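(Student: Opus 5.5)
We follow the strategy of Lutz--Stull for lines, adapted to polynomials. Write $a=(a_0,\dots,a_d)$ and $p_a(t)=\sum_i a_i t^i$. The goal is to show that, at every precision $r$,
\[
K_r(x,p_a(x)) \ge K_r(x\mid a) + \min\{K_r(a),K_r^{a}(x)\} - o(r),
\]
and then divide by $r$ and take $\liminf$. The standard decomposition is
\[
K_r(x,p_a(x))\ge K_r(a,x)-K_r(a\mid x,p_a(x))-O(\log r).
\]
Symmetry of information gives $K_r(a,x)=K_r(a)+K_r(x\mid a)$. The whole task is therefore to bound $K_r(a\mid x,p_a(x))$, the information about the polynomial still missing once the point on its graph is known. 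We will either show it is at most $K_r(a)-\min\{K_r(a),K^a_r(x)\}+o(r)$, or use the trivial bound $K_r(x,p_a(x))\ge K_r(x)\ge K_r^a(x)$ in the regime where that is enough.

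\textbf{Oracle lowering.} As in Lutz--Stull, we first apply the lemma that produces an oracle $D$ (depending on $r$ and $a$) such that $K^D_s(a)\approx\min\{K_s(a),\eta s\}$ for all $s\le r$. Here $\eta$ is chosen with $\eta r\approx\min\{K_r(a),K^a_r(x)\}$, so that relative to $D$ the coefficient vector has complexity at most $\eta$ per bit. Relativizing the decomposition above to $D$ then makes it suffice to show that $a$ is essentially determined by $(x,p_a(x))$ relative to $D$ at precision $r$.

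\textbf{Enumeration and counting.} Given $(x,y)$ to precision $r$, we enumerate all coefficient vectors $b$ with $K^D_r(b)\le\eta r+\epsilon r$ and $|p_b(x)-y|\le 2^{-r}$, and output the first one found. The key counting lemma is the following. Suppose $b$ is such a candidate and $\|b-a\|\approx 2^{-t}$. Then
\[
p_b(x)-p_a(x)=\sum_i (b_i-a_i)x^i ,
\]
so $x$ lies within about $2^{-r}/(\text{scale})$ of a root of the polynomial $q=p_b-p_a$. Its coefficients have size about $2^{-t}$.

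This is where we use the adaptation of real root-finding. We invoke classical root-separation and root-localization bounds, in the form of Cauchy bounds together with a bound on $|q(x)|$ in terms of the distance to the roots. These show that $x$ is within $2^{-(r-t)/d}$, or, in the nondegenerate case, within $2^{-(r-t)}$ times a factor depending on the leading terms, of one of at most $d$ roots of $q$. Each root is computable from $(a,b)$ at the appropriate precision. This yields
\[
K^{a}_{r-t}(x)\le K_r(b)+K_t(a)\text{-type terms}\le K^D_t(a)+\eta(r-t)+o(r),
\]
where the factor $d$ costs only $O(\log d)$. Combined with the lower bound $K^a_{r-t}(x)\ge K^a_r(x)-t$, this contradicts the choice of $\eta$ unless $t\approx r$. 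Hence any candidate is $2^{-r}$-close to $a$ up to $o(r)$ bits, which gives $K^D_r(a\mid x,y)=o(r)$.

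\textbf{Main obstacle.} The hardest step is the root-localization argument. For lines, $q$ is linear and $x$ is pinned to precision $r-t$ directly. For degree $d$, $q$ may have clustered or multiple roots, or small leading coefficients. Then small $|q(x)|$ only gives $x$ to precision $(r-t)/d$, which loses a factor in the exponent. Our first attempt is to factor $q$ as $c\prod(t-\rho_j)$ over $\mathbb{C}$ and consider the nearest root. We then argue on the multiplicity and cluster structure, using a precision-dependent case split: we recurse on the derivative $q'$ when roots cluster, in the style of Sturm/Descartes-type isolation. This should recover precision $r-t$ up to $O(d\log r)$, or else show that the cluster itself encodes $a$ cheaply enough that the bound still holds. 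If this loss cannot be avoided, a weaker version with $\min$ scaled appropriately would be the fallback.
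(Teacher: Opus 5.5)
Your architecture (symmetry of information, the lowering oracle $D$ of \cref{lem:oraclebound}, enumerating low-complexity coefficient vectors consistent with the point, and a lower bound for polynomials meeting $p_{\mathbf a}$ at $x$) is the paper's. The quantitative core, however, is set up wrongly. Your per-precision target $K_r(x,p_{\mathbf a}(x))\ge K_r(x\mid\mathbf a)+\min\{K_r(\mathbf a),K^{\mathbf a}_r(x)\}-o(r)$ is false already for $d=1$. Suppose that at infinitely many $r$ the coefficients satisfy $K_{r/2}(\mathbf a)=o(r)$ but have random bits in positions $r/2$ to $r$, and $x$ is random relative to $\mathbf a$. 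The line through $(x,p_{\mathbf a}(x))$ whose slope is $a_1$ truncated to $r/2$ bits costs only $r/2+o(r)$ bits at precision $r$. Hence $K_r(x,p_{\mathbf a}(x))\le\tfrac{3}{2}r+o(r)$, while your right-hand side is about $2r$. Accordingly, your comparison step cannot close. With $\eta r\approx\min\{K_r(\mathbf a),K^{\mathbf a}_r(x)\}$ and $K^{\mathbf a}_{r-t}(x)\ge K^{\mathbf a}_r(x)-t$, your inequalities reduce to $K^{\mathbf a}_r(x)-\eta r\le(1-\eta)t+\epsilon r$. This does not force $t\approx r$; if $\eta r\approx K^{\mathbf a}_r(x)$ it holds for every $t$. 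Also, the oracle gives $K^D_t(\mathbf a)=\min\{\eta r,K_t(\mathbf a)\}$, not $\min\{\eta t,K_t(\mathbf a)\}$. The paper, like Lutz--Stull, instead fixes a rational $\eta\le\min\{\dim(\mathbf a),\dim^{\mathbf a}(x)\}$ with slack $\delta=\dim^{\mathbf a}(x)-\eta>0$. It then uses the liminf bounds at \emph{every} intermediate precision: $K_t(\mathbf a)\ge\eta t-\epsilon r$ and $K_{r-t,r}(x\mid\mathbf a)\ge(\eta+\delta)(r-t)-\epsilon r$. Granted the intersection bound, every $\mathbf b$ meeting the graph at $x$ then has $K^D_r(\mathbf b)\ge(\eta-2\epsilon)r+\delta(r-t)-O(\log r)$. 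Comparison with $(\eta+\epsilon)r$ gives $r-t=O(\epsilon r/\delta)$. The resulting loss, of order $(d+1)\epsilon r/\delta$ rather than $o(r)$, disappears only after dividing by $r$ and letting $\epsilon\to0$.

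The second gap is the one you flag yourself, and it is not merely technical. You never establish $K_r(\mathbf b)\ge K_t(\mathbf a)+K_{r-t,r}(x\mid\mathbf a)-O(\log r)$ for $d\ge2$, and no argument can, because it is false. Take $\mathbf a$ computable, $x$ random, $k=\lfloor(r-t)/2\rfloor$, and $m$ a $k$-bit dyadic with $2^{-k}\le x-m<2^{-k+1}$. Let $\mathbf b$ be the coefficients of $p_{\mathbf a}(z)+2^{-t}\bigl((z-m)^2-(x-m)^2\bigr)$. Then $p_{\mathbf b}(x)=p_{\mathbf a}(x)$ and $\|\mathbf b-\mathbf a\|\approx2^{-t}$. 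Yet $\mathbf b$ at precision $r$ is computable from $r$, $t$, $m$ and $(x-m)^2$ at precision $r-t$, i.e.\ from about $k+(r-t-2k)\approx(r-t)/2$ bits. By contrast, $K_t(\mathbf a)+K_{r-t,r}(x\mid\mathbf a)\approx r-t$. Note that $q'$ here vanishes at $m$, not at $x$, so your derivative recursion does not help. The paper derives this bound (\cref{lem:errbound}) by running the Sturm-sequence bisection \textsc{RootEnum} and indexing into its list of at most $6d^2$ candidates. The example shows that this step does not handle near-tangential intersections either, so you cannot simply import it. What holds in general is localization of $x$ to precision about $(r-t)/d$, e.g.\ by a Remez-type sublevel-set estimate. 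With that, the bookkeeping above yields only $\dim(x\mid\mathbf a)+\min\{\dim(\mathbf a),\dim^{\mathbf a}(x)/d\}$, which is your fallback. The stated inequality would require a genuinely new way to exclude low-complexity near-tangent competitors, and neither your proposal nor the paper's argument supplies one.
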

The second main theorem for this paper establishes that the dimension spectrum of polynomials with coefficients with effective dimension at most 1, contains a unit-length interval. 
\begin{restatable}[]{thm}{maintheoremlow}\label{thm:less}
  Let $\sum_{i=0}^{d}a_ix^i\in\mathbb{R}[x]$ be a degree $d$
  polynomial with $\dim(\mathbf{a})\leq1$. Then for every $s\in[0,1]$,
  there is a point $x\in\mathbb{R}$ such that
  $\dim\left(x,\sum_{i=0}^{d}a_ix^i\right)=s+\dim(\mathbf{a})$.
\end{restatable}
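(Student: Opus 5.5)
We prove \cref{thm:less} by imitating the Lutz--Stull construction for lines, using the polynomial structure only through \cref{thm:d_degree_bound} and a matching upper bound. Write $\mathbf{a}=(a_0,\dots,a_d)$, $p(x)=\sum_i a_ix^i$, and $D=\dim(\mathbf{a})\le 1$. The upper bound is routine. Since $p(x)$ is computable from $(x,\mathbf{a})$ with only an additive $O(\log r)$ loss at each precision (polynomials are locally Lipschitz), we have
\[
K_r(x,p(x))\le K_r(x,\mathbf{a})+O(\log r)\le K_r(\mathbf{a})+K_r(x\mid\mathbf{a})+O(\log r).
\]
So it suffices to build $x$ with $\dim(x\mid\mathbf{a})=s$ in the appropriate precision-wise sense and then match the lower bound. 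The two endpoints need care. For $s=0$, a computable-from-$\mathbf{a}$ choice such as $x=a_0$ gives dimension exactly $D$ via the upper bound together with $\dim(x,p(x))\ge\dim(\mathbf{a})$ along suitable precisions. For $s=1$ we use a random $x$ relative to $\mathbf{a}$.

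\textbf{Construction for $s\in(0,1)$.} Take $y$ Martin-L\"of random relative to $\mathbf{a}$. Let $x$ agree with $y$ on a density-$s$ set of bits and be zero elsewhere, with the zero blocks laid out in the usual ``interleaved'' fashion. Then $K_r(x\mid \mathbf{a})=sr+o(r)$ and $\dim^{\mathbf{a}}(x)=s$, and $x$ still has $\dim^{\mathbf{a}}(x)\ge s$ at every precision. Plugging into \cref{thm:d_degree_bound} gives
\[
\dim(x,p(x))\ge \dim(x\mid\mathbf{a})+\min\{D,s\}.
\]
That is only $s+\min\{D,s\}$, which falls short of $s+D$ when $s<D$.

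\textbf{The main obstacle.} The bound from \cref{thm:d_degree_bound} is too weak for $s<D$. Following Stull's line argument, I would instead use the precision-wise estimate behind \cref{thm:d_degree_bound}, which is presumably proved as $K_r(x,p(x))\ge K_r(x\mid\mathbf{a})+\min\{K_r(\mathbf{a}),K^{\mathbf{a}}_r(x)\}-o(r)$ via an enumeration/root-counting argument, rather than its limit form. The idea is to arrange $x$ so that on long stretches of precisions $(r_j, r_{j+1}]$ its \emph{local} relative complexity is large, so that at those scales the other polynomials through $(x,p(x))$ of low complexity are excluded and the full $K_r(\mathbf{a})$ is recovered. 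On the remaining stretches $x$ is zero, and there we add the complexity linearly. This gives the usual piecewise lower bound $K_r(x,p(x))\ge sr+K_r(\mathbf{a})-o(r)$ at every precision, as in Lutz--Stull.

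The hypothesis $D\le 1$ is exactly what makes this work. It ensures $K_r(\mathbf{a})\lesssim r$, so a block of $x$ of length $\approx$ the deficit can carry enough randomness to dominate $K_r(\mathbf{a})$ locally; this is the analogue of the condition $\dim(a,b)\le 1$ in Stull's line case. Verifying that the root-counting bound of \cref{thm:d_degree_bound} localises to precision intervals, with only $O(\log r)$ loss from degree $d$ multiplicity, is the step I expect to be hardest.

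Finally, taking $\liminf$ of the matching upper and lower bounds yields $\dim(x,p(x))=s+D$.
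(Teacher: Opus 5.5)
Your cases $s=1$ and, implicitly, $s\in[\rho,1)$ with $\rho=\dim(\mathbf a)$ are fine and coincide with the paper. A density-$s$ random $x$ together with \cref{thm:d_degree_bound} gives the lower bound $s+\min\{\rho,s\}=s+\rho$, and $K_r(x,p(x))\le K_r(\mathbf a)+K_r(x\mid\mathbf a)+O(\log r)$ gives the upper bound. The case $s=0$, however, fails as written. The choice $x=a_0$ does not force $\dim(x,p(x))\ge\rho$, because the single point $(a_0,p(a_0))$ need not carry the information of $a_1,\dots,a_d$. For example, with $\mathbf a=(0,a_1)$ and $\dim(a_1)>0$, the point is $(0,0)$. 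To reach exactly $\rho$ you need $x$ to encode $a_1,\dots,a_d$ themselves, so that $a_0$ can then be read off from $p(x)$. The paper uses its interleaved encoding of $\mathbf a\setminus a_0$ for this case as well.

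For $0<s<\rho$ the missing idea is the same encoding, and zero blocks cannot replace it. At any precision $h$ where your upper bound is to be attained, you need $K_h(\mathbf a)\lesssim\rho h$ and $K_h(x\mid\mathbf a)\lesssim sh$. But then \cref{lem:errbound} guarantees a competing $\mathbf b$ through $(x,p(x))$ at distance $2^{-t}$ with $t=O(1)$ only about $K_{h-t}(x\mid\mathbf a)\lesssim sh<\rho h$ bits. So the lemma cannot exclude far-away competitors, however you arrange the random and zero blocks. The paper, following Stull, writes bits of $a_1,\dots,a_d$ into $x$ on $(sh_j,h_j]$. The point then computes $\mathbf a$ to precision about $(r-sh_j)/d$ at cost $O(\log h_j)$ (\cref{lem:lemma10}), so only competitors close to $\mathbf a$ matter. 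For those, \cref{lem:errbound} gives about $K_t(\mathbf a)+(r-t)\ge\rho t+(r-t)\ge\rho r$, because $x$ is random relative to $\mathbf a$ on its initial segment (\cref{lem:lemma11}). This, not ``$K_r(\mathbf a)\lesssim r$'', is where $\rho\le1$ is used.

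Your intermediate target is also false. $\dim(\mathbf a)\le1$ bounds only a $\liminf$, so $K_r(\mathbf a)$ can be close to $(d+1)r$ for infinitely many $r$, while $K_r(x,p(x))\le2r+O(\log r)$. Hence $K_r(x,p(x))\ge sr+K_r(\mathbf a)-o(r)$ cannot hold at every precision. The right goal is $(s+\rho)r-o(r)$ at every $r$ (\cref{lem:lessleft,lem:lessright}), together with the upper bound only at stage precisions $h_j$ where $K_{h_j}(\mathbf a)\le(\rho+1/j)h_j$ (\cref{lem:lessup}). That is why the stages must be synchronised with those $h_j$. Localising the root-counting argument is not the real difficulty, since \cref{lem:errbound} is already a precision-wise statement.
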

The proofs adapt the techniques in N. Lutz and Stull \cite{Lutz2020e} to polynomials. We adapt the classical root-finding methods, namely, the bisection method, together with methods which count real roots from Sturm's theory \cite{Sturm1835} (see for example, von zur Gathen and Gerhard \cite{Gathen2013} and Yap \cite{Yap2000}) to establish our results. Due to the fact that the coefficients can be arbitrary real numbers, we have to adapt these techniques to form short descriptions of the roots of these functions. These versions may be of broader interest. 

The proof of the second result, building on the first, involves
encoding the coefficients of a given polynomial in such a way so as to control the dimension of the corresponding point. 

We conclude by discussing the implications of our result - in
particular, that dimension level sets - sets consisting of points of the same effective dimension - cannot contain polynomials. 


\section{Prerequisites}
We denote the binary alphabet by $\Sigma=\{0,1\}$. The set of finite binary strings is denoted by $\Sigma^*$ and the set of infinite binary sequences, by $\Sigma^\infty$. Empty string is denoted by $\lambda$. The length of a finite string $w\in\Sigma^*$ is denoted by $\ell(w)$. 

We denote the set of rational numbers by $\Q$ and the set of reals by $\R$. In this work, we assume a binary encoding $e:\Q \to \Sigma^*$ of the set of rationals. 

We denote by $\mathbf{a}$ the tuple $a_0,a_1,\ldots,a_d$, denoting the coefficients of a polynomial $\sum_{i=0}^da_ix^i\in\mathbb{R}[x]$, such that $a_d\ne0$. 

We now introduce the basic notions in Kolmogorov complexity (see, for example, Downey and Hirschfeldt \cite{Downey2008}, Nies \cite{Nies2009} or Li and Vit\'{a}nyi \cite{Li2019}). 

\begin{definition}[Kolmogorov Complexity of binary strings]
	For each pair of strings $v,w\in\{0,1\}^*$, the \emph{Kolmogorov
		complexity} of $v$ given $w$ is defined as $K\left(v|w\right)$ =
	$\min_{\pi\in\{0,1\}^*}$
	$ \left\{\ell(\pi):U\left(\pi,w\right)=v\right\}$, where $U$ is a
	fixed universal (prefix) Turing machine. The \emph{Kolmogorov
		complexity} of $v$, denoted by $K(v)$, is $K(v\mid \lambda)$.
\end{definition}
\begin{definition}[Kolmogorov Complexity of binary strings relative to an oracle] 
	For each pair of strings $v,w\in\{0,1\}^*$, the \emph{Kolmogorov complexity} of $v$ given $w$ relative to an oracle $A\subseteq\mathbb{N}$ is defined as $K^A\left(v|w\right)$=$\min_{\pi\in\{0,1\}^*}\left\{\ell(\pi):U^A\left(\pi,w\right)=v\right\}$, where $U$ is a fixed universal (prefix) Turing machine with oracle access to $A$. 
\end{definition}
In recent works (\cite{Lutz2008a}, \cite{Lutz2008c}, \cite{Turetsky2011}, \cite{Lutz2018}, \cite{Lutz2018c}, \cite{Stull2020}, \cite{Stull2022}, \cite{Lutz2023a}) the theory of Kolmogorov complexity of binary strings has been adapted to the study of Kolmogorov complexity of reals. Instead of defining the complexity in terms of the truncated binary expansions of the real (which has known issues such as addition of reals being uncomputable - see Weihrauch \cite{Weihrauch2000} for a discussion), the approach defines the complexity of a real in terms of the complexities of its rational approximations. The intuition behind the following definition is this: any rational $q$ within the open neighborhood of radius $2^{-r}$ around $x$ is a valid description of $x$ to within precision $r\in\N$. The shortest description of a real point $x$ in $\R^n$ to within a precision $r\in\mathbb{N}$ is the shortest description of any rational point within the $2^{-r}$ neighborhood of $x$. Note that this may not necessarily be the rational obtained by truncating the binary expansion of $x$ to $r$ bits. 

\begin{definition}[Kolmogorov Complexity of Reals](Lutz and Mayordomo
  \cite{Lutz2008a}) 
  The Kolmogorov complexity of a real number $x\in\mathbb{R}^n$ up to
  a precision $r\in\mathbb{N}$ is defined as
\begin{align*}
K_r(x)=\min\left\{K(q):q\in\mathbb{Q}^n\cap B\left(x,2^{-r}\right)\right\},
\end{align*}
where $B(x,d)=\left\{y\in\mathbb{R}^n:\|y-x\|_2< d\right\}$ is the open ball of radius $d$ centered at $x$.
\end{definition}
The conditional Kolmogorov complexity of $x\in\R^m$ given $y\in\R^n$
is defined using the Kolmogorov complexity of rational approximations
to $x$ and $y$. 

\begin{definition}[Conditional Kolmogorov Complexity of Reals]
  (Lutz and Mayordomo \cite{Lutz2008a}) 
  The conditional Kolmogorov complexity of $x\in\R^m$ at precision
  $r$ given $q\in\Q^n$ is
\begin{align*}
\hat{K}_r\left(x|q\right)=\min\left\{K\left(p|q\right):p\in B\left(x,2^{-r}\right)\cap\Q^m\right\}.
\end{align*}
The conditional Kolmogorov complexity of $x\in\mathbb{R}^m$ at
precision $r\in\mathbb{N}$ given $y \in \mathbb{R}^n$ at precision
$s\in\N$ is defined as
\begin{align*}
K_{r,s}\left(x|y\right)=\max\left\{\hat{K}_r(x|q):q\in B\left(y,2^{-s}\right)\cap \Q^n\right\}.
\end{align*}
\end{definition}

{\bf Notation.} We use $K_r\left(x|y\right)$ to denote $K_{r,r}\left(x|y\right)$. 

One of the characteristics of the theory of effective Hausdorff and packing dimension, in contrast to the classical theory, is that individual points can have strictly positive effective dimension. The effective Hausdorff dimension of a point is defined as follows. 

\begin{definition}[Effective Hausdorff dimension of a point](Lutz and Mayordomo \cite{Lutz2008a}) The \emph{effective Hausdorff dimension}
  of a point $x\in\R^m$, denoted $\dim(x)$, is defined by
  $ \dim(x)=\liminf_{r\rightarrow\infty}\frac{K_r(x)}{r}$. The
  \emph{effective strong dimension} of $x\in\mathbb{R}^m$ is defined
  by
$\Dim(x)=\limsup_{r\rightarrow\infty}\frac{K_r(x)}{r}$.
\end{definition}
Relativizing the above definitions with respect to an oracle $A \subseteq \N$, we can define the notions $K^A_r(x)$, $K^A_r(x|y)$, $\dim^A(x)$, and $\Dim^A(x)$, where the universal machine $U$ has access to the oracle $A$. Since there is a bijective correspondence between subsets of natural numbers and reals, using any standard encoding of reals, we may also define, for any $y\in\mathbb{R}$, the notions $K^y_r(x)$, $\dim^y(x)$, and $\Dim^y(x)$. 

\begin{definition}[Dimension spectrum of a set](Lutz and
Mayordomo\cite{Lutz2020d})
	For any $S \subseteq \R^n$, the \emph{dimension spectrum} of $S$, denoted by $\text{sp}(S)$, is defined by $\text{sp}(S)=\left\{\dim(x):x\in S\right\}$. 
\end{definition}
The following result by Lutz and Stull \cite{Lutz2020e} gives a lower bound on the dimensions of points in the graph of a straight line $z\mapsto az+b$ in terms of the dimensions of $a$, $b$ and the effective relative dimension of the co-ordinate $z\in\R$.

\begin{theorem}[\label{stullre}Lutz and Stull \cite{Lutz2020e}(unrelativized)] 
For every $a,b,x\in\mathbb{R}$, we have
\begin{align}\label{thm:line_dim_ub}
\dim\left(x,ax+b\right)\geq\dim^{a,b}\left(x\right)+\min\left\{dim(a,b),dim^{a,b}(x)\right\}.
\end{align}
\end{theorem}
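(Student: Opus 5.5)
The plan is to follow the N.~Lutz--Stull strategy: bound $K_r(x,ax+b)$ from below at every sufficiently large precision $r$, then divide by $r$ and take $\liminf$. Write $z=(x,ax+b)$. Since $(a,b,x)$ is computable from $(a,b,z)$ at comparable precision, symmetry of information gives
\[
K_r(a,b,x)\le K_r(z)+K_r(a,b\mid z)+O(\log r).
\]
So it suffices to show that, relative to a suitable oracle, $K_r(a,b\mid z)$ is small.

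\textbf{Step 1 (geometric lemma).} Let $(u,v)$ be a line passing within $2^{-r}$ of $z$ with $\|(u,v)-(a,b)\|\approx 2^{-t}$. Elementary algebra gives $|x-x'|\lesssim 2^{-r+t}$, where $x'$ is the abscissa of the intersection of the two lines. Hence any such $(u,v)$ satisfies
\[
K_r(u,v)\ \ge\ K_t(a,b)+K_{r-t,r}(x\mid a,b)-O(\log r).
\]
This holds because $(a,b)$ to precision $t$ and $x$ to precision $r-t$ (given $(a,b)$) can be recovered from $(u,v)$ to precision $r$. Relativized to $(a,b)$, the second term is at least $\dim^{a,b}(x)\,(r-t)-o(r)$.

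\textbf{Step 2 (enumeration lemma).} Suppose every line $(u,v)$ through the $2^{-r}$-ball of $z$ at distance at least $2^{-m}$ from $(a,b)$ has $K_r(u,v)>K_r(a,b)$ up to slack $\varepsilon r$. Then $(a,b)$ can be recovered to precision about $m$ from $z$. To do this, enumerate rational pairs $(u,v)$ with complexity at most $K_r(a,b)$ that are consistent with $z$, and output the first one found. This yields $K_r(a,b\mid z)\le K_{r-m}(a,b\mid a,b \text{ at precision } m)+\varepsilon r$. Combining with the symmetry-of-information display gives $K_r(z)\ge K_r(x\mid a,b)+K_m(a,b)-\text{slack}$.

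\textbf{Step 3 (oracle to lower complexity).} The hypothesis in Step 2 can fail when $\dim(a,b)$ exceeds $\dim^{a,b}(x)$. In that case the lower bound of Step 1 is not larger than $K_r(a,b)$. Let $\eta=\min\{\dim(a,b),\dim^{a,b}(x)\}$. I would construct an oracle $D$ (a string of about $K_r(a,b)-\eta r$ bits of information about $(a,b)$) such that:
\begin{itemize}
\item $K^D_s(a,b)\approx\min\{K_s(a,b),\,\eta s+O(\log r)\}$ for all $s\le r$;
\item $K^{D}_r(x\mid a,b)$ and $K^{a,b,D}$-quantities are unchanged up to $O(\log r)$.
\end{itemize}
Relative to $D$, Step 1 gives
\[
K^D_r(u,v)\ \ge\ \eta t+\dim^{a,b}(x)(r-t)-\varepsilon r\ \ge\ \eta r-\varepsilon r\ \approx\ K^D_r(a,b).
\]
So the hypothesis of Step 2 holds with $m=r$. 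We conclude $K^D_r(z)\ge K_r(x\mid a,b)+\eta r-\varepsilon r$, and since $K_r(z)\ge K^D_r(z)-O(1)$, the same bound holds for $K_r(z)$. Dividing by $r$, taking $\liminf$, and letting $\varepsilon\to0$ gives the stated inequality, using $\liminf K_r(x\mid a,b)/r\ge \dim^{a,b}(x)$.

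\textbf{Main obstacle.} I expect the hardest step to be the oracle construction in Step 3 and the verification that it leaves the conditional complexity of $x$ given $(a,b)$ intact. I would build $D$ by truncating an optimal description of $(a,b)$ at the last precision where its complexity profile crosses the line $\eta s$. The delicate point is making sure the lower bound $K^D_s(a,b)\ge\min\{K_s(a,b),\eta s\}$ holds uniformly in $s$, not just at $s=r$, because Step 1 needs it at every intermediate $t$.
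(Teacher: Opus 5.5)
Your architecture is the right one: it is the N.~Lutz--Stull argument that the paper cites and adapts for Theorem~\ref{thm:d_degree_bound}. However, the step where Step~3 feeds into Step~2 fails as written. Relative to $D$ you only derive $K^D_r(u,v)\ge\eta r-\varepsilon r$, and you compare this with $K^D_r(a,b)\approx\eta r$. That gives no separation. The search in Step~2 must use a length bound at least $K^D_r(a,b)$, since otherwise an approximation of $(a,b)$ itself need not be found. So a far line is excluded only if its complexity \emph{exceeds} that bound. The unavoidable $o(r)$ losses in passing from $\dim$ to $K_r$ make a bound of the form $\eta r-\varepsilon r$ useless for this.

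The only possible source of separation is the surplus $(\dim^{a,b}(x)-\eta)(r-t)$, and you discard it when you write $\eta t+\dim^{a,b}(x)(r-t)\ge\eta r$. With your choice $\eta=\min\{\dim(a,b),\dim^{a,b}(x)\}$, this surplus is identically zero whenever $\dim^{a,b}(x)\le\dim(a,b)$. In that case nothing in your argument rules out a line that meets the $2^{-r}$-ball around $z$, has $t\ll r$, and has $K^D_r(u,v)\le K^D_r(a,b)$. The first pair found may then agree with $(a,b)$ only to precision $t$, so the claim that the hypothesis of Step~2 holds with $m=r$ is unjustified. Even when $\dim(a,b)<\dim^{a,b}(x)$, you do not get $m=r$.

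The missing ingredient is the quantitative trade-off in Lutz--Stull's Lemma~3.1, which is Lemma~\ref{lem:errorestimate} with $d=1$.
\begin{enumerate}
\item Fix a \emph{rational} $\eta<\min\{\dim(a,b),\dim^{a,b}(x)\}$. Rationality is needed because both the oracle lemma and the search machine take $\eta$ as input.
\item Set $\delta=\dim^{a,b}(x)-\eta>0$ and keep the surplus. Then, for large $r$, every competing line with $t\le r$ satisfies $K^D_r(u,v)\ge(\eta-\varepsilon)r+\delta(r-t)$.
\item Search over programs of length at most $(\eta+\varepsilon)r$. This search does find an approximation to $(a,b)$, because $K^D_r(a,b)\le\eta r+O(\log r)$. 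It can only return lines with $r-t\le 2\varepsilon r/\delta$.
\item Hence $m=r-2\varepsilon r/\delta$, at a cost of about $4\varepsilon r/\delta$ bits. This yields $K_r(x,ax+b)\ge K_r(x\mid a,b)+\eta r-\frac{4\varepsilon}{\delta}r-o(r)$.
\end{enumerate}
The order of limits matters. First let $\varepsilon\to0$ with $\eta$, and hence $\delta$, fixed. Only then let $\eta\uparrow\min\{\dim(a,b),\dim^{a,b}(x)\}$.

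Separately, the oracle property you flag as the main obstacle is stronger than necessary. The cited oracle (Lemma~\ref{lem:oraclebound}) only caps complexity at $\eta r$, giving $K^D_s(a,b)=\min\{\eta r,K_s(a,b)\}+O(\log r)$ for $s\le r$. Since $\eta<\dim(a,b)$, this already yields $K^D_t(a,b)\ge\eta t-O(\log r)$ for all $t\le r$. So uniformity in $s$ is not where the difficulty lies.
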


\section{Basic properties of Kolmogorov complexity of reals}
In this section, we state a few basic properties of Kolmogorov complexity, conditional Kolmogorov complexity and relative Kolmogorov complexity of reals pertinent for the remainder of the work.

The following approximate symmetry of information holds for pairs of reals. 

\begin{lemma}[Lutz and Stull \cite{Stull2020}]
\label{lem:soi}
For every $m,n\in\N$, $x\in\R^m$, $y\in\R^n$, and precision parameters
$r,s\in\N$ with $s\le r$, we have
\begin{enumerate}
\item $\left|K_{r,r}(x|y)+K_r(y)-K_{r,r}(x,y)\right|~\le 
O(\log r)+O(\log\log\|y\|)$.
\item $|K_{r,s}(x|x)+K_s(x)-K_r(x)|~\le ~
O(\log r)+O(\log\log\|x\|)$.
\end{enumerate}
\end{lemma}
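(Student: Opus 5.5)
The statement to be proved is the symmetry-of-information lemma (Lemma~\ref{lem:soi}) for reals. The plan is to reduce both parts to the classical symmetry of information for finite strings, $K(u,v)=K(v)+K(u\mid v,K(v))+O(1)$. The reduction needs two standard estimates about rational approximations. The first is that the complexity of a dyadic approximation is close to $K_r$. Let $y\upharpoonright r$ be the dyadic rational obtained by truncating $y$ to $r$ bits in each coordinate. Then
\[
K(y\upharpoonright r)=K_r(y)+O(\log r)+O(\log\log\|y\|).
\]
One direction holds because $y\upharpoonright r$ is a rational within $2^{-r+O(1)}$ of $y$. For the other direction, any rational $q\in B(y,2^{-r})$ together with $O(\log r)$ extra bits determines $y\upharpoonright r$: only $O(1)$ dyadic candidates lie near $q$, and the integer part, whose length is about $\log\|y\|$, is already determined by $q$. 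The second estimate is that conditioning on any rational $q\in B(y,2^{-r})$ is equivalent, up to $O(\log r)$, to conditioning on $y\upharpoonright r$. This follows from the same translation argument applied to the conditional argument.

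For part (1), fix an optimal rational $q$ witnessing the max in $K_{r,r}(x\mid y)$; up to the error terms we may replace it by $y\upharpoonright r$. We then have
\[
K_{r,r}(x\mid y)=K\bigl(x\upharpoonright r \mid y\upharpoonright r\bigr)\pm O(\log r),
\]
again using that a short conditional description of some rational near $x$ gives $x\upharpoonright r$ with $O(\log r)$ more bits, and conversely. Similarly,
\[
K_{r,r}(x,y)=K\bigl(x\upharpoonright r,\,y\upharpoonright r\bigr)\pm O(\log r),
\]
since a rational near $(x,y)$ splits into rationals near each coordinate. The string symmetry of information gives
\[
K(u,v)=K(v)+K(u\mid v)+O(\log K(v)).
\]
Here $K(v)\le O(r+\log\|y\|)$, so the conditioning on $K(v)$ only costs $O(\log r)+O(\log\log\|y\|)$. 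Combining these identities yields (1).

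For part (2), apply the same reduction with $u=x\upharpoonright r$ and $v=x\upharpoonright s$. Since $v$ is computable from $u$ and $s$, we get $K(u,v)=K(u)+O(\log r)$. The string symmetry of information then gives
\[
K(x\upharpoonright r)=K(x\upharpoonright s)+K\bigl(x\upharpoonright r\mid x\upharpoonright s\bigr)\pm O(\log r)+O(\log\log\|x\|).
\]
The conditional term equals $K_{r,s}(x\mid x)$ by the second estimate applied at precision $s$ on the conditioning side.

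I expect the main obstacle to be the conditional estimate. The definition of $K_{r,s}$ takes a maximum over all rationals $q$ in the ball, not over a canonical one, so we must show uniformly that every such $q$ recovers $y\upharpoonright s$ with $O(\log s)$ advice. Boundary effects, where $q$ lies on the other side of a dyadic grid point from $y$, are handled by giving the constant-size index of the correct neighbouring dyadic cell. The $\log\log\|y\|$ term arises only from the bits needed to specify integer parts and the overhead in $K(v)$.
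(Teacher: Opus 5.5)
Your plan (reduce to discrete symmetry of information via dyadic truncations, with the $O(\log\log\|y\|)$ term coming from the $O(\log K(v))$ overhead) is the standard argument. The paper gives no proof of its own here, only the citation to Lutz and Stull, so there is nothing different to compare against. One intermediate claim, however, is false as you stated it and needs repair.

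\textbf{The false claim.} Your ``second estimate'' says that conditioning on an \emph{arbitrary} rational $q\in B(y,2^{-r})$ is equivalent, up to $O(\log r)$, to conditioning on $y\upharpoonright r$. Only one direction holds uniformly. From $q$, $r$ and $O(n)$ advice bits one recovers $y\upharpoonright r$, so $K(p\mid q)\le K(p\mid y\upharpoonright r)+O(\log r)$ for every such $q$. The reverse direction fails, because $q$ may carry arbitrary extra information in its low-order bits. For example, append an encoding of $x\upharpoonright r$ beyond position $r+O(1)$ of $y\upharpoonright(r+O(1))$. The resulting $q$ stays in the ball, yet $K(x\upharpoonright r\mid q)=O(\log r)$, while $K(x\upharpoonright r\mid y\upharpoonright r)$ can be close to $mr$. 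This is exactly why $K_{r,s}$ takes a maximum over $q$. It also means you cannot ``fix an optimal $q$ and replace it by $y\upharpoonright r$'' by appeal to a two-sided equivalence.

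\textbf{The fix.} Sandwich the maximum instead.
\begin{itemize}
\item The uniform direction gives $K_{r,r}(x\mid y)\le K(x\upharpoonright r\mid y\upharpoonright r)+O(\log r)$.
\item For the lower bound, evaluate the maximum at the single witness $q_0=y\upharpoonright(r+c)$ with $c=\lceil\log\sqrt n\,\rceil$. This $q_0$ does lie in $B(y,2^{-r})$, which $y\upharpoonright r$ itself need not when $n>1$. It is also computable from $y\upharpoonright r$ given $r$ and $cn$ advice bits. For every $p\in B(x,2^{-r})$ this yields $K(x\upharpoonright r\mid y\upharpoonright r)\le K(x\upharpoonright r\mid q_0)+O(\log r)\le K(p\mid q_0)+O(\log r)$. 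Hence $K(x\upharpoonright r\mid y\upharpoonright r)\le \hat{K}_r(x\mid q_0)+O(\log r)\le K_{r,r}(x\mid y)+O(\log r)$.
\end{itemize}
The same sandwich at precision $s$, with witness $x\upharpoonright(s+c)$, gives $K_{r,s}(x\mid x)=K(x\upharpoonright r\mid x\upharpoonright s)\pm O(\log r)$ for part (2).

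With this correction, the rest of your argument goes through as written.
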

The following lemma establishes a one-sided bound between the
relativized Kolmogorov complexity and the conditional complexity, at a
specific precision.

\begin{lemma}[J. Lutz and N. Lutz \cite{Lutz2018}]
\label{lem:relative_conditional_complexity}
For all $m,n\in\N$, there is a constant $c\in\N$ such that for all
reals $x\in\R^m$, $y\in\R^n$, and precision parameters $r,t\in\N$ we
have
\begin{enumerate}
\item $K_r^y(x)\le K_{r,t}(x|y)+K(t)+c$.
\item $\dim^y(x)\le\dim(x|y)$.
\end{enumerate}
\end{lemma}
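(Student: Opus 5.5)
The statement immediately above is Lemma~\ref{lem:relative_conditional_complexity}, so there are two items to prove. For item~1 the plan is to turn a shortest conditional description into an oracle program. For item~2 we then pass to the limit in item~1 with $t=r$.

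\emph{Item 1.} Let $\pi$ be a program witnessing $K_{r,t}(x|y)$. Recall that
\[
K_{r,t}(x|y)=\max_{q\in B(y,2^{-t})\cap\Q^n}\hat{K}_r(x|q).
\]
Hence for every rational $q$ within $2^{-t}$ of $y$ there is some program of length at most $k:=K_{r,t}(x|y)$ which, given $q$, outputs a rational $p\in B(x,2^{-r})$. The difficulty is that these programs may depend on $q$.

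This is handled by fixing the particular $q$ that the oracle machine will compute. Given oracle access to $y$ and the number $t$, a machine can compute a canonical rational $q_t(y)\in B(y,2^{-t})$, for instance a dyadic truncation of each coordinate to $t+\lceil\log n\rceil+1$ bits. Take $\pi$ to be a shortest program with $U(\pi,q_t(y))\in B(x,2^{-r})$; its length is at most $k$.

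The oracle machine $M^y$ then works as follows:
\begin{itemize}
\item it reads a self-delimiting description of $t$, costing $K(t)$ bits;
\item it queries $y$ to compute $q_t(y)$;
\item it reads $\pi$ and simulates $U(\pi,q_t(y))$.
\end{itemize}
Since $U$ is prefix-free, the concatenation of the description of $t$ with $\pi$ is uniquely parsable. The universal oracle machine simulates $M$ with only a constant overhead $c$, depending on $n$ and $m$ through the encoding. This gives
\[
K^y_r(x)\le k+K(t)+c.
\]

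\emph{Item 2.} Apply item~1 with $t=r$, which gives
\[
K^y_r(x)\le K_{r,r}(x|y)+K(r)+c.
\]
Since $K(r)=O(\log r)$, dividing by $r$ and taking $\liminf$ yields
\[
\dim^y(x)\le\liminf_{r\to\infty}\frac{K_r(x|y)}{r}=\dim(x|y),
\]
where $\dim(x|y)$ is read as this $\liminf$, following the paper's notation $K_r(x|y)=K_{r,r}(x|y)$.

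The only delicate point is the one already addressed in item~1: the definition of $K_{r,t}(x|y)$ uses a maximum over $q$, so the bound holds for every approximation $q$. This is precisely what lets us commit to the computable approximation $q_t(y)$ produced by the oracle machine.
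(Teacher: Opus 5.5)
Your proof is correct, and it is essentially the standard argument from J. Lutz and N. Lutz; the paper itself only cites this lemma and gives no proof. The oracle machine reads a self-delimiting description of $t$, queries $y$ to produce a rational $q\in B(y,2^{-t})$, and runs a conditional program for that $q$, whose length is at most $K_{r,t}(x|y)$ because that quantity is a maximum over all such $q$; item~2 then follows exactly as you say by taking $t=r$, using $K(r)=O(\log r)$, and reading $\dim(x|y)$ as $\liminf_{r} K_{r,r}(x|y)/r$.
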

When the precision of either the conditioning variable or the
conditioned variable changes, then the Kolmogorov complexity changes
at most by an amount linear in the change in precision, uniformly in
the points, as the following lemmas show.

\begin{lemma}[Case and J. Lutz \cite{Case2015}]
\label{lem:precision_linear}
There is a constant $c\in\N$ such that for all $n\in\mathbb{N}$, $x\in\R^n$ and
precision parameters $r,s\in\N$, we have
\begin{align}
\label{ineq:precision_linear}
K_r(x)\ \le\ K_{r+s}(x) \ \le\ K_r(x)+K(r)+ns+a_s+c.
\end{align}
\end{lemma}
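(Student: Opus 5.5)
The first inequality is immediate from the definitions, so the plan is to prove it directly and then build an explicit description for the second. Since $B(x,2^{-(r+s)})\subseteq B(x,2^{-r})$, every rational $q\in\Q^n$ that witnesses $K_{r+s}(x)$ is also a candidate in the minimum defining $K_r(x)$. Hence $K_r(x)\le K_{r+s}(x)$.

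For the upper bound I will give a prefix-free description of a rational point in $B(x,2^{-(r+s)})$ built from a shortest description of a rational $2^{-r}$-approximation of $x$. Fix $q\in\Q^n\cap B(x,2^{-r})$ with $K(q)=K_r(x)$. Consider the grid
\[
G_{q,r,s}=\left\{q+2^{-(r+s)-k}z : z\in\mathbb{Z}^n,\ \|2^{-(r+s)-k}z\|_\infty\le 2^{-r+1}\right\},
\]
where $k=\lceil\tfrac12\log n\rceil+1$ is chosen so that every point of $\R^n$ lies within Euclidean distance strictly less than $2^{-(r+s)}$ of some grid point. Since $\|x-q\|<2^{-r}$, $x$ lies in the cube of side $2^{-r+2}$ around $q$. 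Therefore some $p\in G_{q,r,s}$ satisfies $\|p-x\|<2^{-(r+s)}$. The set $G_{q,r,s}$ has at most $(2^{s+k+2}+1)^n=2^{ns+O(n\log n)}$ elements. It is computable from $(q,r,s)$ and can be enumerated in a fixed canonical order.

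The machine for $p$ reads, in order:
\begin{itemize}
\item a shortest program for $q$ (length $K_r(x)$);
\item a shortest program for $r$ (length $K(r)$);
\item a self-delimiting description of $s$ (length $K(s)+O(1)$);
\item the index of $p$ in $G_{q,r,s}$, written in exactly $\lceil\log|G_{q,r,s}|\rceil$ bits.
\end{itemize}
Once $q,r,s$ are known, this index length is determined, so the whole concatenation is prefix-free. The total length is $K_r(x)+K(r)+ns+K(s)+O(n\log n)+c$.

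The main obstacle is bookkeeping rather than a real difficulty. One must ensure the encoding is genuinely prefix-free, which is why $s$ is encoded before the fixed-length index. One must also match the error terms to the form stated in the lemma. I will read $a_s$ as the overhead for describing $s$, namely $K(s)$ plus the lower-order additive term, and absorb the $n$-dependent grid overhead $O(n\log n)$ either into $a_s$ or into $c$, which is legitimate once $n$ is fixed. If the intended statement requires $c$ uniform in $n$, I would instead replace the cube by a $2^{-(r+s)}$-net of the ball of radius $2^{-r}$ of size $O(2^{ns})\cdot 2^{O(n)}$, and charge the $O(n)$ term to $a_s$ as well.
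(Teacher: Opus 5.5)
The paper gives no proof of this lemma; it is quoted from Case and J.~Lutz. Your proof is correct and is essentially the standard grid-counting argument behind it: the explicit $a_s$ in the source consists of $K(s)$, logarithmic terms and an $n$-dependent $O(n\log n)$ overhead, which matches your bound $K_r(x)+K(r)+K(s)+ns+O(n\log n)+c$. One correction: the $O(n\log n)$ term may be charged to $a_s$ but not to $c$, because the statement fixes one $c$ for all $n$; since your single machine reads $n$ off $q$, charging it to $a_s$ already makes $c$ uniform, so the net-based variant is unnecessary.
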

The following lemma states similar bounds for conditional Kolmogorov
complexity.

\begin{lemma}[J. Lutz and N. Lutz \cite{Lutz2018}]
  \label{lem:precision_var}
  For all $m,n\in\N$, there is a constant $c\in\N$ such that for all
  points $x\in\R^m$, $y\in\R^n$, $q\in\Q^n$ and precision parameters
  $r,s,t\in\N$, we have the following.
\begin{enumerate}
\item $\hat{K}_{r}(x|q)
  \ \le\ \hat{K}_{r+s}(x|q)
  \ \le\ \hat{K}_{r}(x|q)+ms+2\log(1+s)+K(r,s)+c$.
\item
  $K_{r,t}(x|y)
  \ \ge\ K_{r,t+s}(x|y)
  \ \ge\ K_{r,t}(x|y)-ns-2\log(1+s)+K(t,s)+c$.
\end{enumerate}
\end{lemma}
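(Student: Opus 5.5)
The plan is to prove both parts by a direct covering argument: the monotone inequalities come from containment of balls, and the quantitative inequalities come from refining a known rational approximation by a grid of about $2^{ms}$ (resp.\ $2^{ns}$) candidate points, whose index costs $ms+O(1)$ (resp.\ $ns+O(1)$) bits.

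\emph{Part 1.} The left inequality $\hat K_r(x|q)\le \hat K_{r+s}(x|q)$ holds because $B(x,2^{-(r+s)})\subseteq B(x,2^{-r})$, so the minimum defining $\hat K_{r+s}$ ranges over a subset of the one defining $\hat K_r$. For the right inequality, let $\pi$ be a shortest program with $U(\pi,q)=p$ and $p\in B(x,2^{-r})\cap\Q^m$. Fix the dyadic grid $G_{r,s}$ of spacing $2^{-(r+s)}/\sqrt m$ in $\R^m$. Every point of $\R^m$ lies within distance $2^{-(r+s)}$ (strictly, after halving the spacing if necessary) of some grid point. The grid points within distance $2^{-r}+2^{-(r+s)}$ of $p$ number at most $C_m2^{ms}$, where $C_m$ depends only on $m$. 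Since $\|x-p\|<2^{-r}$, some such grid point $p'$ satisfies $\|x-p'\|<2^{-(r+s)}$.

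A machine that receives a self-delimiting description of $(r,s)$, then $\pi$, then the index of $p'$ in a canonical enumeration of these grid points can do the following: decode $r$ and $s$, run $U(\pi,q)$ to obtain $p$, enumerate the grid points near $p$, and output $p'$. Because $s$ and $m$ are known by the time the index is read, the index is written with exactly $\lceil ms+\log C_m\rceil$ bits. This gives
\begin{align*}
\hat K_{r+s}(x|q)\le \hat K_r(x|q)+ms+K(r,s)+c .
\end{align*}
The slack term $2\log(1+s)$ in the statement covers any additional self-delimiting encoding of $s$ if one prefers not to fold it into $K(r,s)$.

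\emph{Part 2.} The left inequality again comes from containment of balls: $B(y,2^{-(t+s)})\subseteq B(y,2^{-t})$, so the maximum defining $K_{r,t+s}$ is taken over a subset and is no larger.

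For the right inequality, which should read $K_{r,t+s}(x|y)\ge K_{r,t}(x|y)-ns-2\log(1+s)-K(t,s)-c$ (with the sign of $K(t,s)$ being a minus), let $q\in B(y,2^{-t})\cap\Q^n$ attain the maximum $K_{r,t}(x|y)=\hat K_r(x|q)$. The maximum is attained because the values are natural numbers bounded by $\hat K_r(x|0)+K(q)$-type bounds, or one can argue with a near-maximizer. As in Part 1, among the grid points of spacing $2^{-(t+s)}/\sqrt n$ within distance $2^{-t}+2^{-(t+s)}$ of $q$, at most $C_n2^{ns}$ in number, there is one, call it $g$, with $g\in B(y,2^{-(t+s)})\cap\Q^n$. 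Given $q$, the description of $(t,s)$, and the $(ns+O(1))$-bit index, a machine computes $g$. It then runs a shortest program for a $p\in B(x,2^{-r})$ conditioned on $g$. Hence
\begin{align*}
K_{r,t}(x|y)=\hat K_r(x|q)\le \hat K_r(x|g)+ns+K(t,s)+c\le K_{r,t+s}(x|y)+ns+K(t,s)+c,
\end{align*}
where the last step uses that $g$ is one of the rationals over which $K_{r,t+s}(x|y)$ maximizes.

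The only delicate point is bookkeeping in the prefix-free setting. The program pieces must be concatenated self-delimitingly: $(r,s)$ or $(t,s)$ first, then the grid index, whose length is determined by $s$ and the dimension, with the main program given in the position the universal machine expects. One must also verify that the grid count constant depends only on $m$ or $n$, which keeps $c$ uniform in $x,y,q,r,s,t$. Strictness of the balls is handled by using a spacing strictly below $2^{-(r+s)}/\sqrt m$, which changes only the constant.
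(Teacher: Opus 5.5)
Your proposal is correct. The paper gives no proof of this lemma; it simply cites J.~Lutz and N.~Lutz. Your grid-refinement argument is the standard proof of this fact, essentially the conditional version of the Case--Lutz covering argument. Ball containment gives the monotone inequalities. For the quantitative ones, once $(r,s)$ (resp.\ $(t,s)$) has been decoded, indexing one of $O_m(2^{ms})$ (resp.\ $O_n(2^{ns})$) rational grid points near the known approximation costs $ms+O(1)$ (resp.\ $ns+O(1)$) bits. You therefore get the bounds without the $2\log(1+s)$ term, which implies the stated ones.

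You are also right that part 2 as printed has a sign error and must read $-K(t,s)-c$. With $s=0$, the printed version would assert $K_{r,t}(x|y)\ge K_{r,t}(x|y)+K(t,0)+c$, which is absurd.

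Two small corrections.
\begin{itemize}
\item Take the spacing to be the dyadic number $2^{-(r+s)-\lceil\frac12\log m\rceil}$ rather than $2^{-(r+s)}/\sqrt m$. Then the grid points are rational, and every point of $\R^m$ is within $2^{-(r+s)-1}<2^{-(r+s)}$ of one.
\item In part 2, your justification that the maximum is attained does not work: a bound of the form $\hat K_r(x|q)\le\hat K_r(x|0)+K(q)$ is not uniform in $q$. The correct uniform bound is $\hat K_r(x|q)\le K(p_0)+O(1)$ for a fixed $p_0\in B(x,2^{-r})\cap\Q^m$. Attainment is not actually needed, though. Your argument proves $\hat K_r(x|q)\le K_{r,t+s}(x|y)+ns+K(t,s)+c$ for every $q\in B(y,2^{-t})\cap\Q^n$, and taking the maximum over $q$ finishes the proof.
\end{itemize}
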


\section{Outline of the Proof of \cref{thm:d_degree_bound} }

Let $A(x)=\sum_{i=0}^d a_i x^i$. It is easy to establish an upper
bound on the dimension of a point $(x,A(x))$, given approximations to the coefficients of $A$ and to $x$. We can give a lower bound on the dimension of the vector $(x,a_0,\dots,a_d)$ using symmetry of information. However, the Kolmogorov complexity of the point $(x,A(x))$ may be lower than that of $(x,a_0,\dots,a_d)$. The main task we accomplish is to establish lower bounds for the complexity of $(x,A(x))$ and consequently, its effective Hausdorff dimension. 

The outline of the proof, adapting that of Lutz and Stull
\cite{Stull2020} is as follows. In \cref{lem:errbound}, we show that if a different $d$-degree polynomial $B(x)=\sum_{i=0}^d b_i x^i$ intersects $A$ at $x$, then either the coefficients of $B$ are close in $L_2$ norm to $A$, or the coefficients of $B$ must have very high Kolmogorov complexity. We introduce a technique of approximating roots when only approximations to the coefficients and rational approximations to points in the domain are available, adapting Sturm's theory to this new setting. This lower bounds the Kolmogorov complexity of $\left(x,A(x)\right)$. 

\cref{lem:oraclebound} (lemma 3.3 from \cite{Lutz2020e}) ensures an oracle $D$ which limits the complexity of the coefficients of $A$. These results are then used to obtain the main technical lemma, \cref{lem:errorestimate}, which yields the exact lower bound for the Kolmogorov complexity of $(x,A(x))$ at a specified precision $r$. The main lemma is then utilized to obtain a lower bound for the effective Hausdorff dimension of $\left(x,A(x)\right)$.

\section{Approximating intersecting polynomials}
\label{ssec:roots}


In this subsection, we prove a lower bound for the Kolmogorov complexity of \emph{any} approximation to the coefficients of a polynomial in terms of Kolmogorov complexity of the exact coefficients and the roots. This is crucial bound which leads to the main result (\cref{thm:d_degree_bound}). To this end, we provide an algorithm which lists approximations to all points where two given polynomials $B(x) = \sum_{i=0}^d b_i x^i$ and $A(x)=\sum_{i=0}^d a_i x^i$ intersect. This ensures that the complexity of any $x$ such that $A(x)=B(x)$ can be lower bounded using the complexities of ${\bf a}$ and ${\bf b}$. Since the coefficients and $x$ are real, this algorithm will take as input approximations to these values, and output approximations to the roots to any arbitrary precision. 

The points of intersection of $A$ and $B$ are exactly the roots of the $d$-degree polynomial $C(x) = A(x)-B(x)$. Thus there are at most $d$ points of intersection between $A$ and $B$ on $\R$. The problem at hand therefore reduces to computing all possible real roots, with multiplicities, of a polynomial with real coefficients, up to arbitrary precision. We start by outlining the bisection method, followed by the importance of Sturm's theorem, and then specify the algorithm. 



In the special case of polynomials, the following method enumerates \emph{all} the real roots of any univariate polynomial. Assume that we know the exact values of all the coefficients of a polynomial (in our case, we will have to work with approximations, and this makes the procedure much more technical). We start by getting a bound on the absolute value of any real root. This helps us to select an interval that is large enough to contain all the real roots of the polynomial. Recall that the roots of the polynomial are not guaranteed to be distinct - roots may have multiplicity greater than 1. In order to get all the real roots (with multiplicities), we use Sturm's theorem to get the number of distinct roots (with arbitrary multiplicities) of the polynomial in any given interval. This allows us to improve the bisection method whenever there are repeated roots (of even multiplicities) in any interval. 

For a degree $n$ polynomial $P$, we compute a sequence of polynomials, called the Sturm sequence of $P$ as follows (see, for example, Gerhard, von zur Gathen \cite[Ch~2]{Gathen2013}, or Yap \cite[Ch~7]{Yap2000}. The first 2 polynomials in the sequence are the polynomial itself, followed by its derivative - \emph{i.e} $P_0 = P$ and $P_1=P'$. For $i\ge2$, $P_i=P_{i-2}\mod P_{i-1}$, where $\mod$ is the remainder obtained when dividing the polynomial $P_{i-2}$ by $P_{i-1}$. Any Sturm sequence has at most $d+1$ elements, for a degree $d$ polynomial. Now, all the polynomials in the Sturm sequence obtained are evaluated at the end points of an interval, say $(a,b]$, which results in 2 sequences, each corresponding to an end point. We count the number of sign changes in each sequence, and denote them by $\sigma(a)$ and $\sigma(b)$ respectively. 

\begin{theorem}[Sturm, 1835 \cite{Sturm1835}]\label{thm:sturm}
	For a square free polynomial $P$, the number of distinct roots of $P$ in the interval $(a,b]$ is $\sigma(a)-\sigma(b)$. If the polynomial has repeated roots, and if neither $a$ nor $b$ is a root of $P$, then the number of distinct roots in $(a,b]$ is equal to $\sigma(a)-\sigma(b)$. 
\end{theorem}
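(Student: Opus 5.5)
We prove \cref{thm:sturm} by the classical argument: track how the sign-change count $\sigma(t)$ of the Sturm sequence $P_0,P_1,\dots,P_m$ evaluated at $t$ varies as $t$ moves from $a$ to $b$. Since each $P_i$ is continuous and has finitely many zeros, $\sigma(t)$ is piecewise constant. It can change only when $t$ crosses a zero of some $P_i$. The claim therefore reduces to two local facts. First, crossing a zero of some $P_i$ with $i\ge1$ leaves $\sigma$ unchanged. Second, crossing a zero of $P_0=P$ decreases $\sigma$ by exactly one. Summing over the zeros of $P$ in $(a,b]$ then gives $\sigma(a)-\sigma(b)$ distinct roots.

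\emph{Square-free case.}
1. The recursion $P_{i-2}=Q_{i-1}P_{i-1}-P_i$ holds with the usual sign convention, taking the negated remainder so that Sturm's chain property holds. Hence $\gcd(P_i,P_{i+1})=\gcd(P,P')$ for every $i$, and this gcd is a nonzero constant when $P$ is square free. Consequently no two consecutive $P_i,P_{i+1}$ vanish at the same point, and the last element $P_m$ is a nonzero constant.
2. Let $c$ be a zero of $P_i$ with $1\le i<m$. Then $P_{i-1}(c)=-P_{i+1}(c)\ne0$. The triple $(P_{i-1},P_i,P_{i+1})$ therefore contributes exactly one sign change on both sides of $c$, whatever the sign of $P_i$, and $\sigma$ is locally unchanged.
3. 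Let $c$ be a zero of $P$. Since $P$ is square free, $P'(c)\ne0$. Just left of $c$, $P$ and $P'$ have opposite signs; just right of $c$, they have the same sign. The pair $(P_0,P_1)$ thus loses exactly one sign change, and by the previous step the rest of the sequence is locally unaffected.
4. The half-open convention $(a,b]$ needs care. At a right endpoint $b$ with $P(b)=0$, zero entries are dropped in the sign-change count, so $\sigma(b)$ already equals its value just right of $b$. The root $b$ is therefore counted. At $a$, the change occurs to the right of $a$ only if $a$ itself is not a root. I expect this endpoint bookkeeping to be the main fiddly point, though not a real obstacle.

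\emph{Repeated roots.} Let $g=\gcd(P,P')$ and put $\tilde P_i=P_i/g$. The sequence $(\tilde P_i)$ satisfies the same recursion, $\tilde P_0=P/g$ has the same distinct roots as $P$ and is square free, and consecutive $\tilde P_i$ have no common zero. Steps 2 and 3 then apply to $(\tilde P_i)$. In step 3 we use that near a root $c$ of multiplicity $k$, $P/g$ and $P'/g$ behave like $(t-c)$ and $k$ times a nonzero constant. At any $t$ that is not a root of $P$, we have $g(t)\ne0$, so multiplying the whole sequence by the single scalar $g(t)$ does not alter the sign-change count, and $\sigma(t)=\tilde\sigma(t)$. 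This is exactly why the theorem needs the hypothesis that neither $a$ nor $b$ is a root. The conclusion $\sigma(a)-\sigma(b)$ then follows from the square-free case.
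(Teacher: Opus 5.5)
The paper does not prove this theorem; it quotes it from Sturm and the cited textbooks (von zur Gathen--Gerhard, Yap). Your argument is the standard textbook proof found there, and it is correct. You track $\sigma(t)$, show that zeros of $P_i$ with $i\ge 1$ leave $\sigma$ unchanged because $P_{i-1}(c)=-P_{i+1}(c)\neq 0$, and show that each root of $P$ removes exactly one sign change from the pair $(P_0,P_1)$. You then reduce the repeated-root case to the sequence $P_i/\gcd(P,P')$.

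Your sign convention is not cosmetic. You take $P_i=-(P_{i-2}\bmod P_{i-1})$, whereas the paper defines $P_i=P_{i-2}\bmod P_{i-1}$ with no minus sign. Under the paper's literal definition the statement is false. For $P=x^2-1$ the remainder gives $P_2=-1$, and the sequences at $-2$ and $2$ are $(3,-4,-1)$ and $(3,4,-1)$. Each has exactly one sign change, yet $(-2,2]$ contains two roots. With unsigned remainders one gets $P_{i-1}(c)=+P_{i+1}(c)$ at a zero of $P_i$. Then $\sigma$ jumps by $\pm 2$ there, and this is exactly where your step 2 would break. So your proof establishes the theorem for the negated-remainder Sturm sequence. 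That is the sequence the paper's \textsc{SturmSequence} algorithm would have to compute for its later use of the theorem to be valid.

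Two small points of presentation.
\begin{enumerate}
\item The endpoint bookkeeping in step 4 is cleanest stated as right-continuity. For every $t$, $\sigma(t)=\lim_{s\downarrow t}\sigma(s)$, and $\sigma$ drops by exactly one at each root of $P$. This handles $a$ and $b$ at once and explains why the interval is $(a,b]$.
\item In the repeated-root case, saying that $P/g$ and $P'/g$ ``behave like $(t-c)$ and $k$ times a nonzero constant'' needs the same constant in both, or the sign conclusion does not follow. A simpler route: write $P=(t-c)^k h$ with $h(c)\neq 0$. Then $\tilde P_0\tilde P_1=PP'/g^2$, and $PP'=k\,h(c)^2(t-c)^{2k-1}(1+o(1))$ near $c$. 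So $\tilde P_0\tilde P_1$ has the sign of $t-c$ on a punctured neighbourhood of $c$.
\end{enumerate}
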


If the end points happen to be the roots of the polynomial, we report them as is, and continue the search if required.

The following lemma bounds the roots of the polynomial $P$. We use this to determine the starting interval for our algorithm, ensuring that it is large enough to output all the real roots of $P$. 

\begin{lemma}[Cauchy Bound \cite{Cauchy1829}]
	Let $P(x)=\sum_{i=1}^d c_ix^i$ be a polynomial. If for any $x \in \R$, $P(x)=0$, then $x\in(-\beta,\beta)$, where $\beta=1+\max \left\{\left|\frac{a_{n-1}}{a_n}\right|,\dots,\left|\frac{a_0}{a_n}\right|\right\}$. 
\end{lemma}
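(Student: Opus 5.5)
The plan is the standard direct argument. I read the statement with the notational slip corrected: $P(x)=\sum_{i=0}^d c_ix^i$ with $c_d\neq 0$, and $\beta=1+\max_{0\le i<d}|c_i/c_d|$. The cases $|x|\le 1$ and $|x|>1$ will be treated separately.

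\textbf{Small roots.} Any root with $|x|\le 1$ satisfies $|x|\le 1<\beta$ trivially, so there is nothing to prove.

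\textbf{Large roots.} Assume now that $|x|>1$ and $P(x)=0$, and set $M=\max_{0\le i<d}|c_i/c_d|$. From $P(x)=0$ we get $c_dx^d=-\sum_{i<d}c_ix^i$. Dividing by $c_d$ and applying the triangle inequality gives
\[
|x|^d\le \sum_{i=0}^{d-1}|c_i/c_d|\,|x|^i\le M\sum_{i=0}^{d-1}|x|^i = M\,\frac{|x|^d-1}{|x|-1}.
\]
Since $|x|>1$, we may multiply through by $|x|-1>0$. This yields $|x|^d(|x|-1)\le M(|x|^d-1)<M|x|^d$, hence $|x|-1<M$, i.e.\ $|x|<1+M=\beta$. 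Both cases together place every real root in $(-\beta,\beta)$.

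\textbf{Degenerate case and the one delicate step.} If $M=0$, then $P=c_dx^d$, whose only root $0$ lies in $(-1,1)$, so that case is immediate as well. The main obstacle is keeping the inequality strict: the strictness comes from replacing $|x|^d-1$ by $|x|^d$, which is valid when $M>0$. The case $M=0$ must be handled separately because the middle inequality would otherwise read $|x|^d\le 0$. That is a contradiction, so in fact no root with $|x|>1$ exists when $M=0$. This is the full plan; no earlier results of the paper are needed.
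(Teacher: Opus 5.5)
Your argument is correct and complete. There is nothing in the paper to compare it against: the paper gives no proof of this lemma and simply cites Cauchy's classical bound. Your derivation (the triangle inequality plus the geometric-series estimate for $|x|>1$) is the standard textbook one. Your reading of the garbled statement, with the sum starting at $i=0$, $c_d\neq 0$, and $\beta=1+\max_{i<d}|c_i/c_d|$, is the intended one; the literal version with the sum from $i=1$ is just the special case $c_0=0$.

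One presentational point: in the ``small roots'' case, the claim $|x|\le 1<\beta$ holds only when $M>0$. If $M=0$ then $\beta=1$, and the claim fails at $|x|=1$. This is not a gap, because your degenerate-case paragraph handles $M=0$ entirely (the only root is $0\in(-1,1)$). It would still read more cleanly to dispose of $M=0$ first and then assume $M>0$ throughout, so that both the strictness $1<\beta$ and the strict step $M(|x|^d-1)<M|x|^d$ are visibly justified.

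The argument also applies verbatim to complex roots, since it uses only absolute values and the triangle inequality. That is the form in which Cauchy's bound is usually stated.
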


Using the above classical results, we now introduce the algorithm that upper bounds the number of roots in an interval from rational approximations of the coefficients. We introduce the preliminary algorithms for the Sturm sequence and the approximate sign counting, concluding with the algorithm that enumerates the roots. 

The main subtlety we deal with is this: since the algorithms work with approximations, the exact signs of the polynomial value cannot be determined. This is because a small negative value is considered an acceptable input approximation to a small positive value. This leads to a conservative estimate of the sign changes, leading to a slightly longer list of possible roots. No root will be omitted at the given precision. However, since it is impossible to algorithmically determine whether a function is exactly equal to 0 or is a small positive or small negative value, points besides the roots may be counted if the precision is not sufficiently high. The algorithm outputs a list sufficiently short so that the Kolmogorov complexity of describing any root from the output list remains acceptably small. 

\begin{lemma}\label{lem:strumseqeval}
  There is an algorithm SturmSequence that, on input
  $(c_0, \dots, c_d)$ and $\alpha\in\Q$ outputs a list of rationals
  $\langle \hat{\alpha}_i \rangle_{i=1}^d$ of the evaluations of the
  Sturm Sequence corresponding to the polynomial
  $P_0(x)=\sum_{i=0}^d c_i x^i$ at $\alpha$.
\end{lemma}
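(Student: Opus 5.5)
The plan is to give an explicit procedure whose correctness rests only on exact rational arithmetic. The input $(c_0,\dots,c_d)$ and $\alpha$ are rationals, and $\Q[x]$ is closed under the operations needed to build a Sturm sequence, so every intermediate object is a finite list of rationals that can be computed exactly.

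First, SturmSequence strips leading zero coefficients to find the true degree $n\le d$ of $P_0$. If all coefficients vanish, it outputs the trivial list. It then sets $P_0=\sum_{i=0}^n c_ix^i$ and computes $P_1=P_0'=\sum_{i=1}^n i c_i x^{i-1}$, whose coefficients are again rational.

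Next, for $i\ge 2$, the procedure computes $P_i = P_{i-2}\bmod P_{i-1}$ by ordinary polynomial long division over $\Q$. Each step divides by the leading coefficient of $P_{i-1}$, which is a nonzero rational. After each remainder, leading zeros are again removed so that the degree is determined exactly; this is possible because equality to zero of a rational is decidable. Since $\deg P_i<\deg P_{i-1}$, the loop halts after at most $n\le d$ steps, once the remainder is the zero polynomial. This matches the bound of at most $d+1$ elements stated before \cref{thm:sturm}. If the list is shorter than required, it is padded with zeros, which does not affect sign-change counts, to give the stated output length.

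Finally, each $P_i$ is evaluated at $\alpha$ using Horner's rule in exact rational arithmetic, and the list $\langle\hat\alpha_i\rangle$ is output, with $\hat\alpha_i=P_i(\alpha)$ exactly. Termination and correctness follow from the decreasing degrees and the closure of $\Q$ under field operations.

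I expect no deep obstacle here. The only subtle point is the zero test on leading coefficients. This is trivial for rational inputs, but it is exactly the step that would fail for real coefficients. For that reason the lemma is stated for rational approximations, and the approximation issues are deferred to the subsequent sign-counting algorithm.
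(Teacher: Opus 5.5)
Your proposal is correct and follows the same route as the paper's SturmSequence algorithm: form $P_0$ and $P_1=P_0'$, generate the remaining terms by repeated Euclidean division, and evaluate each at $\alpha$. You also make explicit several points the paper leaves implicit: exact arithmetic over $\Q$, the decidable zero test when stripping leading coefficients, termination from strictly decreasing degrees, and padding to a fixed length. One remark applies equally to the paper: for the sequence to satisfy Sturm's theorem you should use the negated remainder, $P_i=-(P_{i-2}\bmod P_{i-1})$, which is a trivial change to your procedure.
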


The main step in our algorithm to find approximations to all real
roots is the following sign computation. If the values of the
polynomials are accurately known, then the sign determination is
trivial. However, when the true value is nearly 0, it is difficult to
determine its actual sign. We adopt a conservative upper bound for the
actual number of sign changes.

\begin{lemma}\label{lem:signchangecalc}
  There are algorithms \textsc{MaxSignChange} and
  \textsc{MinSignChange} for any sequence $(q_0, \dots, q_d)$ of
  rational approximations to reals $(r_0, \dots, r_d)$ where for $0
  \le i \le d$ we have $|q_i - r_i| < 2^{-r}$, \textsc{MaxSignChange}
  outputs an upper bound on the number of sign changes in $(r_0,
  \dots, r_d)$ and \textsc{MinSignChange} outputs a lower bound.
\end{lemma}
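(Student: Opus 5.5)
The plan is to give both algorithms explicitly and show they are correct by a case analysis on the entries that are near zero. Call an entry \emph{ambiguous} if $|q_i|<2^{-r}$. For such an entry the sign of $r_i$ is not determined: $r_i$ may be negative, zero, or positive. If $|q_i|\ge 2^{-r}$, then $|q_i-r_i|<2^{-r}$ gives $\operatorname{sgn}(r_i)=\operatorname{sgn}(q_i)\neq 0$. So the set of sign vectors consistent with the data is
\[
S=\prod_{i=0}^d S_i ,
\]
where $S_i=\{\operatorname{sgn}(q_i)\}$ for unambiguous $i$ and $S_i=\{-1,0,+1\}$ for ambiguous $i$. This set is finite and computable from $(q_0,\dots,q_d)$ and $r$, since comparing a rational with $\pm 2^{-r}$ is decidable.

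For a sign vector $s\in\{-1,0,1\}^{d+1}$, let $V(s)$ be the number of sign changes, computed in the usual Sturm convention: delete the zero entries, then count the indices where consecutive remaining entries differ. \textsc{MaxSignChange} outputs $\max_{s\in S}V(s)$ and \textsc{MinSignChange} outputs $\min_{s\in S}V(s)$. Both can be computed by brute force over the at most $3^{d+1}$ elements of $S$. The true sign vector $(\operatorname{sgn}(r_0),\dots,\operatorname{sgn}(r_d))$ lies in $S$, so
\[
\min_{s\in S} V(s)\;\le\; V\bigl(\operatorname{sgn}(r_0),\dots,\operatorname{sgn}(r_d)\bigr)\;\le\;\max_{s\in S} V(s),
\]
which proves the lemma. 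Here the algorithms take $r$ as an additional input, or equivalently the promise $|q_i-r_i|<2^{-r}$ with $r$ known. If $r$ were not supplied, no nontrivial bound would be computable, and one could only output the trivial bounds $d$ and $0$.

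If a more efficient, closed-form procedure is wanted, one can avoid enumeration. Since ambiguous entries can be set to $0$ (deleted) or given either sign, the maximum is obtained greedily. Scan left to right, keeping the last nonzero sign, and give each ambiguous entry the opposite sign of that last sign (either sign if no nonzero sign has appeared yet). Each ambiguous entry then adds a change wherever possible. For the minimum, set every ambiguous entry to $0$, since deleting an entry never increases the count. The greedy maximum can be checked against the enumeration by an exchange argument, but it is not needed, because enumeration over $3^{d+1}$ vectors already suffices for a computable bound.

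The main obstacle is not correctness, which is immediate once $S$ is defined. It is the interface with how the $q_i$ arise. The statement assumes each $q_i$ is within $2^{-r}$ of the true Sturm-sequence value $r_i$. Producing such $q_i$ from approximations of the coefficients $c_j$ and of the point $\alpha$ requires the error propagation through polynomial division in \cref{lem:strumseqeval}. For the present lemma I would state this as the hypothesis and defer the precision bookkeeping: the loss in precision depends on $d$, on the magnitude of the coefficients, and on the leading coefficients of the remainders, which must stay bounded away from zero.
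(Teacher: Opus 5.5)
Your argument is correct, but it takes a different route from the paper. The paper uses a one-pass scan. It counts sign changes among the computed signs of the $q_i$. It then adds $1$ if $q_0$ or $q_d$ is near zero, and adds $2$ for each interior near-zero $q_i$ whose neighbours $q_{i-1},q_{i+1}$ have the same computed sign. \textsc{MinSignChange} subtracts instead of adding. The justification is a local argument about a single ambiguous entry.

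This scan runs in linear time, but its local argument tacitly assumes two things: that the neighbours of an ambiguous entry have reliable signs, and that no $q_i$ is exactly $0$. As written, the scan is unsound when either assumption fails:
\begin{itemize}
\item For $(1,\epsilon,-\epsilon,-1)$ with $0<\epsilon<2^{-r-1}$ it returns $1$, although $(1,-\epsilon,\epsilon,-1)$ is consistent and has $3$ sign changes.
\item For $(1,0,-1)$ it returns $0$, although every consistent sequence has exactly one sign change.
\item The subtracting version returns $2$ on $(1,-\epsilon,-\epsilon,1)$, although $(1,\epsilon,\epsilon,1)$ is consistent and has none.
\end{itemize}

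Your formulation via the product set $S$ of consistent sign vectors makes soundness a one-line observation: the true sign vector lies in $S$. It also handles runs of ambiguous entries and exact zeros uniformly. The $3^{d+1}$ enumeration cost is harmless, since $d$ is fixed and only computability matters for the complexity bounds. Your closed form for the minimum, which zeroes out every ambiguous entry, is already linear time and correct.

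One caution on the optional greedy maximum. The rule ``either sign if no nonzero sign has appeared yet'' is not optimal for a leading block of ambiguous entries. For $(q_0,q_1)$ with $|q_0|<2^{-r}$ and $q_1>0$, choosing $+$ gives $0$ changes, while $r_0<0$ is consistent and gives $1$. Such a block should be filled by alternating backwards from the first unambiguous sign. Since your proof rests on the enumeration, this does not affect it.
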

Thus, we get the following upper bound on the output list of possible
roots. This list is guaranteed to contain all the real roots of the
polynomial to the given precision. However, since
\cref{lem:signchangecalc} is a conservative upper bound on the number
of sign changes, there could be some values in the output list which
are not the roots of the polynomial. This cannot be avoided in
general. However, the output list of values is sufficiently small to
control the number of bits used to describe \emph{any} of its members.
This upper bounds the Kolmogorov complexity of \emph{all} the real
roots of the polynomial.
\begin{lemma}\label{lem:rootenum}
  There is an algorithm \textsc{RootEnum} such that on input
  $(c_0,\dots,c_d)\in\Q^{d+1}$ and $r\in\N$, outputs a list of
  rationals $(q_1,\dots,q_\ell)$ of length at most $6d^2$ such that if
  $\hat{x}$ is a real root of $P(x)=\sum_{i=0}^d c_i x^i$, then there
  is an $i$, $1 \le i \le \ell$ such that $|q_i - \hat{x}|\le 2^{-r}$.
\end{lemma}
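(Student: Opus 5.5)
Here $(c_0,\dots,c_d)\in\Q^{d+1}$ are exact rationals, so the algorithm can compute exactly. The plan is to run an exact Sturm-based bisection and bound the output length by counting bisection branches.

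\emph{Preprocessing.} If all $c_i$ are zero there are no roots to report in the relevant sense, so output the empty list; this case is excluded anyway, since $P$ has degree $d$ after we strip leading zero coefficients. Let $e\le d$ be the true degree. Compute the square-free part $\tilde P = P/\gcd(P,P')$ by exact rational Euclidean division. $\tilde P$ has the same real roots as $P$, has degree at most $e$, and Sturm's theorem (\cref{thm:sturm}) applies to it on any half-open interval. The Cauchy bound gives a rational $\beta$ with all real roots in $(-\beta,\beta)$; round $\beta$ up to a power of two $2^k$, so all roots lie in $(-2^k,2^k]$.

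\emph{Bisection.} Maintain a set of dyadic half-open intervals, starting with $\{(-2^k,2^k]\}$. At each stage, split every current interval $(u,v]$ at its midpoint into $(u,m]$ and $(m,v]$. Using the Sturm sequence evaluations from \cref{lem:strumseqeval}, which are exact here, compute $\sigma(u)-\sigma(m)$ and $\sigma(m)-\sigma(v)$. Keep a subinterval only if its count is positive. Stop once the interval length is at most $2^{-r}$, which takes $k+r+1$ stages, and output the right endpoints of all surviving intervals.

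\emph{Correctness.} Every real root $\hat x$ lies in exactly one interval at each stage, and that interval's Sturm count is at least $1$, so it is never discarded. Its final interval has length at most $2^{-r}$ and contains $\hat x$, so the right endpoint $q$ satisfies $|q-\hat x|\le 2^{-r}$.

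\emph{Length bound.} The counts $\sigma(u)-\sigma(v)$ are additive over a partition. At every stage the surviving intervals are disjoint and each contains a distinct root of $\tilde P$. Hence there are at most $e\le d\le 6d^2$ of them, and the output has length $\ell\le d$.

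The only delicate point is that \cref{thm:sturm} is stated for the interval $(a,b]$, with the repeated-root version requiring that the endpoints not be roots. Working with the square-free $\tilde P$ sidesteps that hypothesis, since its first clause covers every endpoint.

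If instead one wants the version the paper likely intends, the inputs are approximations and the signs are only estimated via \textsc{MaxSignChange}/\textsc{MinSignChange} (\cref{lem:signchangecalc}). In that case the main obstacle is bounding the spurious intervals. I would bound the overcounted interval count at each stage by $d$ times the number of ambiguous sign positions, at most $d+1$ per endpoint, giving roughly $6d^2$ survivors per stage. The key point is that spurious intervals must be adjacent to near-zero endpoints, so their number cannot compound from one stage to the next.
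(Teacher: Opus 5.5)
Your argument is correct for the lemma as stated, with exact rational inputs, and it takes a genuinely different route from the paper.

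\textbf{The paper's route.} The paper does not prune. It refines $[-\beta,\beta]$ into a uniform grid and scans every cell. It reports the cell's midpoint whenever \textsc{MaxSignChange} at the left endpoint minus \textsc{MinSignChange} at the right endpoint, computed with tolerance $c_p2^{-dr}$, is at least $1$. No root is missed because this over-counts. The bound $6d^2$ comes from \cref{lem:poly_bound}. A spurious cell needs some Sturm polynomial to fall below the tolerance at an endpoint. For sufficiently large $r$ this happens only at grid points within $2^{-r}$ of one of that polynomial's roots. That gives $2d$ polynomials, $d$ roots each, and at most $3$ grid points per root.

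\textbf{Your route.} You exploit exactness instead. Passing to the square-free part removes the endpoint-is-a-root caveat in \cref{thm:sturm}. Exact counts are additive under bisection, and discarding zero-count cells leaves at most one surviving cell per distinct root. So you get $\ell\le d$ without \cref{lem:signchangecalc}, \cref{lem:poly_bound}, the $P$-dependent constant $c_p$, or the ``sufficiently large $r$'' proviso.

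\textbf{What each buys.} The paper's design is meant to tolerate inexact sign information, which is what its later use in \cref{lem:errbound} needs. Your exact method does not transfer to that setting. Your closing paragraph on it is only a heuristic: with approximate coefficients the remainder sequence is not continuous in the coefficients, and real roots can vanish under perturbation (e.g.\ $x^2$ versus $x^2+\varepsilon$). The stated lemma does not need it, though.

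\textbf{One small fix.} When invoking \cref{lem:strumseqeval}, use the standard chain $P_i=-(P_{i-2}\bmod P_{i-1})$. The paper's definition omits the minus sign, and with it the count for $x^2-1$ on $(-2,2]$ comes out $0$ rather than $2$.
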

\begin{remark}
	\cref{alg:rootenum} (used in the proof of the above lemma) is a standalone result about root-finding when coefficients and the domain is only available as an approximation, and is possibly of independent interest. 
\end{remark}
Now, in order to bound the dimension of a point on the graph of a
polynomial $P(x)=\sum_{i=0}^d a_ix^i$, we try to bound the dimension
of the coefficients of a polynomial of equal degree, denoted as
$Q(x)=\sum_{i=0}^d b_ix^i$, \textit{almost coinciding} with $P$,
intersecting it at $x$. The coefficients of this polynomial will
provide sufficient information about the original polynomial, which in
turn will help estimating $x$.
\begin{theorem}\label{lem:errbound}
  Let $x\in\mathbb{R},\mathbf{a}\in\mathbb{R}^{d+1}$. For
  $r\geq t=-\log\|\mathbf{a}-\mathbf{b}\|$, and
  $\mathbf{b}\in B(\mathbf{a},1)$ where
  $\sum_{i=0}^da_ix^i=\sum_{i=0}^db_ix^i$, we have
  \begin{align*}
    K_r(\mathbf{b})\geq K_t(\mathbf{a})+K_{r-t,r}(x|\mathbf{a})+
    O(\log r).
  \end{align*}
\end{theorem}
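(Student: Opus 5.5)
The proof follows the template of the line case in Lutz and Stull (their Lemma 3.1). We bound $K_r(\mathbf{b})$ from below by showing that, given a good approximation of $\mathbf{b}$, we can recover first $\mathbf{a}$ to precision $t$ and then $x$ to precision $r-t$. The recovery of $x$ uses \textsc{RootEnum} (\cref{lem:rootenum}) applied to the difference polynomial $C(z)=\sum_{i=0}^d (b_i-a_i)z^i$, which has $x$ as a real root. Concretely, we aim to establish
\[
K_r(\mathbf{b})+O(\log r)\ \ge\ K_{r}(\mathbf{b},\mathbf{a},x)\ \text{at suitable precisions}\ \ge\ K_t(\mathbf{a})+K_{r-t,r}(x\mid\mathbf{a})-O(\log r).
\]
The desired inequality then follows, with the $O(\log r)$ term absorbed as a (negative) lower-order term.

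\emph{Step 1: $\mathbf{b}$ determines $\mathbf{a}$ to precision $t$.} Since $\|\mathbf{a}-\mathbf{b}\|=2^{-t}$, any rational $2^{-r}$-approximation of $\mathbf{b}$ (with $r\ge t$) is within $2^{-t+1}$ of $\mathbf{a}$. By \cref{lem:precision_linear}, $K_r(\mathbf{b})\ge K_{t}(\mathbf{b})$ and $K_{t}(\mathbf{b})\ge K_t(\mathbf{a})-O(\log r)$, the loss coming from one bit of precision plus describing $t$. Symmetry of information (\cref{lem:soi}) then lets us write $K_r(\mathbf{b})\ge K_t(\mathbf{a})+K_{r,t}(\mathbf{b}\mid\mathbf{a})-O(\log r)$. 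It therefore suffices to show $K_{r,t}(\mathbf{b}\mid\mathbf{a})\gtrsim K_{r-t,r}(x\mid\mathbf{a})$; more precisely, that from $\mathbf{b}$ at precision $r$, together with $\mathbf{a}$ at precision $r$ and $O(\log r)$ extra bits, we can produce $x$ to precision $r-t$.

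\emph{Step 2: recovering $x$ from $C$.} Given rationals $\hat{\mathbf a},\hat{\mathbf b}$ within $2^{-r}$ of $\mathbf a,\mathbf b$, form $\hat{\mathbf c}=\hat{\mathbf b}-\hat{\mathbf a}$, an approximation of $\mathbf{c}=\mathbf{b}-\mathbf{a}$ with absolute error about $2^{-r}$. Because $\|\mathbf{c}\|=2^{-t}$, the relative error is about $2^{-(r-t)}$. We rescale by $2^{t}$, noting that the rescaled $\mathbf{c}$ has norm $1$ and the same roots, and run \textsc{RootEnum} at precision $r-t$. It outputs at most $6d^2$ candidates, one of which is within $2^{-(r-t)}$ of $x$, and an index of $O(\log d)$ bits selects it. Thus $K_{r-t,r}(x\mid\mathbf{a})\le K_{r,r}(\mathbf{b}\mid\mathbf{a})+O(\log r)$. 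Combining this with \cref{lem:soi}, which gives $K_r(\mathbf{b})\ge K_r(\mathbf a,\mathbf b)-K_{r}(\mathbf a\mid\mathbf b)-\dots$, and rebalancing the conditionals via \cref{lem:precision_var}, yields the statement.

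\emph{Main obstacle.} Step 2 is the hard part: \textsc{RootEnum} works to a given precision only when the input coefficients are exact rationals. Here the coefficients of $C$ are known only to relative precision $2^{-(r-t)}$, and the roots of a polynomial can move far more than linearly under perturbation, for instance near multiple roots or when the leading coefficient is tiny. I would first use the conservative \textsc{MaxSignChange} bookkeeping, which is designed to absorb exactly this input error so that no root is lost. I would then check that the candidate interval it reports around $x$ has width about $2^{-(r-t)/d}$ at worst. If only that weaker localization holds, the fallback is to describe $x$ within that interval with $O(\log r)$ extra bits using $\mathbf{a}$ at precision $r$, since the conditioning in $K_{r-t,r}(x\mid\mathbf a)$ allows it. I expect that getting the precision bookkeeping exactly to $r-t$ will require care here.
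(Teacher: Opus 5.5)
\textbf{There is a genuine gap in Step~2, and it cannot be closed: the statement itself is false for $d\ge 2$.}

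Your reduction is the same as the paper's. Symmetry of information together with $B(\mathbf b,2^{-r})\subseteq B(\mathbf a,2^{-(t-1)})$ produces the $K_t(\mathbf a)$ term, and everything then hinges on
\[
K_{r-t,r}(x\mid\mathbf a)\ \le\ K_{r,r}(\mathbf b\mid\mathbf a)+O(\log r),
\]
which both you and the paper try to get from \textsc{RootEnum}. The gap is exactly this inequality.
\begin{itemize}
\item \cref{lem:rootenum} is a statement about exact rational coefficients. It says nothing about the roots of $\mathbf c=\mathbf b-\mathbf a$ when you only hold $2^{-r}$-approximations. The paper's proof makes the same unjustified move: it runs \textsc{RootEnum} ``on $\mathbf b$'' and even claims precision $r$.
\item Your fallback does not repair this. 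Suppose the data only confine $x$ to an interval of width about $2^{-(r-t)/d}$. Pinning $x$ down to $2^{-(r-t)}$ inside that interval can cost about $(1-1/d)(r-t)$ bits. Knowing $\mathbf a$ to precision $r$ does not help, because $x$ may be random relative to $\mathbf a$, which is precisely the regime in which the statement is used.
\end{itemize}

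\textbf{Counterexample via near-tangency.}
\begin{itemize}
\item Take $d=2$, $\mathbf a=(0,0,1)$, $x$ Martin-L\"of random, $t=\lfloor r/2\rfloor$ and $\delta=2^{-(r-t)}$.
\item Pick a dyadic rational $q$ with $\lceil (r-t)/2\rceil+2$ fractional bits such that $u:=q-2x\in[\sqrt\delta,1.5\sqrt\delta]$.
\item Set $B(z)=z^2+2^{-t}(z-x)(z-x-u)$. Then $B(x)=A(x)$, and $2^{-t}\le\|\mathbf b-\mathbf a\|=\Theta_x(2^{-t})$, so $-\log\|\mathbf b-\mathbf a\|\le t$.
\item We have $x(x+u)=(q^2-u^2)/4$ and $2^{-t}u^2/4\in[2^{-r}/4,\,0.57\cdot 2^{-r}]$. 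Hence the rational point $\bigl(2^{-t}(q^2/4-3\delta/8),\,-2^{-t}q,\,1+2^{-t}\bigr)$ lies within $2^{-r}$ of $\mathbf b$.
\item Therefore $K_r(\mathbf b)\le K(q)+O(\log r)\le r/4+O(\log r)$.
\item Since $\mathbf a$ is computable, the claimed right-hand side is at least $K_{r-t}(x)-O(\log r)\ge r/2-O(\log r)$.
\end{itemize}
For large $r$ this is a contradiction.

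Here $B-A$ has two roots at distance about $2^{-(r-t)/2}$. Its coefficients cost only about $(r-t)/2$ bits, yet $x$ carries $r-t$ bits. So no amount of care in the precision bookkeeping rescues Step~2. A correct version would have to pay for the precision lost at near-multiple roots of $B-A$; heuristically only about $(r-t)/d$ bits of $x$ are recoverable in the worst case. The downstream arguments that use this statement would need to change accordingly.
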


\section{Spectra of Polynomials of degree $d\ge2$}
Recall the statement of the main theorem. 

\maintheorem*

Our proof follows the strategy of the work by Lutz and Stull \cite{Stull2020}. However, since we have to work with degree-$d$ polynomials, we deal with an increase in precision, as well as the presence of multiple roots. We indicate the strategy below, while simultaneously showing the similarity and emphasizing the differences from the work of Lutz and Stull \cite{Stull2020}. Broadly, the steps involved in the proof are as follows.

First, note that any polynomial $P(x)$ as defined above is completely described by the list of its real coefficients $(a_0,\dots,a_d)$. We show that any other polynomial as characterized by the list of coefficients $(b_0,\dots,b_d)$ which coincides with $P(x)$ must either be very close to $(a_0,\dots,a_d)$ in Euclidean distance, or must have very high Kolmogorov complexity. The technical steps in the proof, however, are radically different from the work of Lutz and Stull - since a degree-$d$ polynomial can have $d$ real roots, hence the intersection point of the polynomials is not uniquely specified by the lists of coefficients. Moreover, in order to compute the intersection points of the two polynomials, we employ a modification of the bisection method to find all real roots, where we have to manage the error introduced in this approximation, and possible multiplicities of real roots. 

Second, we show a lower bound for $K_r(b_0,\dots,b_d)$ in terms of $\|(a_0,\dots,a_d)-(b_0,\dots,b_d)\|$. 

Then, as in Lutz and Stull \cite{Stull2020}, we use an oracle $D$ such that the following sequences of inequalities hold.
\begin{align*}
K_r(x,P(x))
&\ge K^D_r(x,P(x))\\
&\ge K^D_r(x,a_0,\dots,a_d)-O(1)\\
&\ge K_r(x,a_0,\dots,a_d)-O(\log \log\|x\|)-O_x(\log r),
\end{align*}
establishing that $K_r(x,P(x))$ is not much less than
$K_r(x,a_0,\dots,a_d)$. 

We use these results to prove the required bound.

\subsection{Error estimates for polynomial approximation}

The following is a basic relation between $|a-b|$ and $|a^k-b^k|$,
$k\ge 1$.
\begin{lemma}\label{lem:degree_d_approx}
  If $a,b\in\R$ are such that $|a-b|<2^{-r}$, then for any integer
  $k \ge 1$, we have
\begin{align*}
\left|a^k - b^k\right| \ <\
2^{-r}k\times\max\{|a|,|a|^k,|b|,|b|^k\}. 
\end{align*}
\end{lemma}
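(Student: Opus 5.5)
The plan is to use the factorization $a^k-b^k=(a-b)\sum_{j=0}^{k-1}a^{j}b^{k-1-j}$ together with the hypothesis $|a-b|<2^{-r}$. This gives
\begin{align*}
\left|a^k-b^k\right| \ <\ 2^{-r}\sum_{j=0}^{k-1}|a|^{j}|b|^{k-1-j},
\end{align*}
so it suffices to show that each of the $k$ summands is at most $M=\max\{|a|,|a|^k,|b|,|b|^k\}$. Strictly, the displayed estimate needs $a\neq b$; if $a=b$ the left side is $0$ and we will handle this separately. For $k=1$ the sum is $1$, and the right side is $2^{-r}\max\{|a|,|b|\}$. In this case the bound as literally stated can fail when $|a|,|b|<1$; for example $a=0$, $b=2^{-r-1}$. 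So I would first check the intended normalization: either the maximum should also include $1$, or $k\ge2$ with $|a|,|b|\ge 1$ is assumed in later use. I would state this caveat explicitly and prove the version with $M'=\max\{1,|a|^{k-1},|b|^{k-1}\}\cdot k$, which is what the telescoping gives directly.

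The key step is the termwise bound $|a|^{j}|b|^{k-1-j}\le \max\{|a|,|b|\}^{k-1}$, which follows since each factor is at most $m=\max\{|a|,|b|\}$. Then split into two cases.
\begin{itemize}
\item If $m\ge1$, then $m^{k-1}\le m^k\le \max\{|a|^k,|b|^k\}\le M$, so the sum is at most $kM$.
\item If $m<1$, then $m^{k-1}\le 1$. Here the bound $m^{k-1}\le M$ holds only when $k=1$ (it is vacuous for the $m$ in $M$) or fails in general; this is exactly where the extra constant $1$ in the maximum is needed.
\end{itemize}

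The main obstacle is therefore only this degenerate small-magnitude case. In the rest of the paper the coefficients satisfy $\mathbf b\in B(\mathbf a,1)$, and $x$ can be taken bounded away from $0$ or rescaled. So I would either add $1$ to the maximum or note that only $|a|\ge1$ or $|b|\ge1$ is used. With that adjustment the proof is the one-line telescoping estimate above, and strict inequality is inherited from $|a-b|<2^{-r}$ whenever the sum is positive.
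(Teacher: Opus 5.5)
Your factorization and the termwise bound $|a|^j|b|^{k-1-j}\le m^{k-1}$, with $m=\max\{|a|,|b|\}$, are exactly the paper's route. Your counterexample $a=0$, $b=2^{-r-1}$ at $k=1$ is correct. For $k=1$ the claimed bound reads $|a-b|<2^{-r}\max\{|a|,|b|\}$, which fails for suitable $a,b$ with $m<1$. Because the inequality is strict, it also fails at $a=b=0$ for every $k$, so the case $a=b$ that you postponed is not entirely trivial. The paper's proof has the same hole. Its last step bounds $|a|^{k-1}$ by $|a|$ when $|a|\le 1$, using $|a|^\ell\le|a|$ for $\ell\ge 1$, and this is unavailable when $\ell=k-1=0$.

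Where your proposal goes wrong is the case $m<1$ with $k\ge 2$. You assert that $m^{k-1}\le M$ ``fails in general'' there and that a constant $1$ must be added to the maximum. This is backwards. Write $M=\max\{|a|,|a|^k,|b|,|b|^k\}=\max\{m,m^k\}$. If $0\le m<1$ and $k-1\ge 1$, then $m^{k-1}\le m\le M$. Hence $\sum_{j=0}^{k-1}|a|^j|b|^{k-1-j}\le k\,m^{k-1}\le kM$ in both cases $m\ge 1$ and $m<1$. The exponent $k=1$ is the \emph{only} one at which $m^{k-1}\le M$ can fail.

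So the lemma holds exactly as stated for every $k\ge 2$. If $a\ne b$, then $M>0$ and $|a^k-b^k|\le |a-b|\,kM<2^{-r}kM$. If $a=b\ne 0$, the right-hand side is positive.

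Replacing $M$ by $\max\{1,|a|^{k-1},|b|^{k-1}\}$ gives an inequality that is incomparable with the stated one. It is sharper when $m\ge 1$ and weaker when $m<1$: for $k=2$ and $a,b$ near $1/2$ it gives about $2^{1-r}$ instead of $2^{-r}$. So your proposal never proves the lemma in the range where it is true.

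The minimal correct repair is to assume $k\ge 2$ (handling $k=1$ directly via $|a-b|<2^{-r}$), and to exclude $a=b=0$ or weaken $<$ to $\le$. Your suggestion that later uses only need $|a|\ge 1$ is also inaccurate. The proof of Lemma~\ref{lem:errorestimate} applies this lemma with $k=i=1$, $a=x$ arbitrary and $b=\tilde{x}$.
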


\subsection{Lower bound for the complexity of $(x,P(x))$}

\begin{lemma}\label{lem:errorestimate}
  Let $\mathbf{a}=(a_0,\dots,a_d)\in\R^{d+1}$ with $a_d\ne 0$, $x\in\R$,
  precision parameter $r \in \N$, $\delta \in \R_+$, and parameters
  $\epsilon,\eta\in\Q_+$ be such that
  $r\ge 1+ \log(2\max\{|a_i|\mid 0\le i\le d\} +
  d(1+\max\{|x|,|x|^{2d}\}))$ and the following conditions hold.
\begin{enumerate}[label=(\roman*), itemsep=0pt, topsep=0pt]
\item $K_r(\mathbf{a})\le(\eta+\varepsilon)r$ and
\item For every $\mathbf{b}=(b_0,\dots,b_d)\in B(\mathbf{a},1)$ such
  that $x$ is a root of $\sum_{i=0}^d (b_i-a_i)x^i$, if
  $t=-\log||\mathbf{b}-\mathbf{a}||$ is at most $r$, then
\begin{align*}
K_r(\mathbf{b})\ge(\eta-\varepsilon)r + \delta(r-t).
\end{align*}
\end{enumerate}
Then, 
\begin{align}\label{ineq:lb_x_Px}
  K_r\left(x,P(x)\right)
  \ge
  K_r(\mathbf{a},x)
  - \frac{4\varepsilon}{\delta}(d+1)r
  -K(\varepsilon)-K(\eta)-O_{\mathbf{a}}(\log r). 
\end{align}
\end{lemma}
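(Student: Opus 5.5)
We adapt the proof of Lemma 3.2 of Lutz and Stull \cite{Lutz2020e} to degree $d$. The idea is to show that $(\mathbf{a},x)$ at precision $r$ can be recovered from a precision-$r$ approximation of $(x,P(x))$ plus a small amount of advice. We then apply symmetry of information. Concretely, we aim for
\[
K_r(\mathbf{a},x)\le K_r(x,P(x)) + \frac{4\varepsilon}{\delta}(d+1)r + K(\varepsilon)+K(\eta)+O_{\mathbf{a}}(\log r).
\]
This gives \eqref{ineq:lb_x_Px} after rearranging.

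\textbf{The search machine.} We build an oracle-free machine $M$ that takes as input a rational pair $(q_1,q_2)$ with $|(q_1,q_2)-(x,P(x))|<2^{-r}$, the numbers $r,\varepsilon,\eta$, and a program $\pi$ of length $\le (\eta+\varepsilon)r$. It dovetails over all programs of length at most $(\eta+\varepsilon)r$. It searches for a rational tuple $\mathbf{p}=(p_0,\dots,p_d)$ with $K(\mathbf{p})\le(\eta+\varepsilon)r$ such that $|\sum_i p_i q_1^i - q_2|$ is below a suitable threshold, say $2^{-r+c}$, where $c$ is determined by the hypothesis on $r$. It outputs the first such $\mathbf{p}$ together with $q_1$. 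By hypothesis (i), some $\mathbf{p}$ within $2^{-r}$ of $\mathbf{a}$ has $K(\mathbf{p})\le(\eta+\varepsilon)r$. By \cref{lem:degree_d_approx}, together with the lower bound on $r$ (which controls $\max|a_i|$ and $\max\{|x|,|x|^{2d}\}$), such a $\mathbf{p}$ passes the test, so the search halts. The only inputs beyond $(q_1,q_2)$ are $\varepsilon$, $\eta$ and $r$. These cost $K(\varepsilon)+K(\eta)+O(\log r)$ bits.

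\textbf{Closeness of the output.} Let $\mathbf{p}$ be the output. We must show $\|\mathbf{p}-\mathbf{a}\|\le 2^{-t}$ with $r-t\le \frac{4\varepsilon}{\delta}(d+1) r$ roughly. First, perturb $\mathbf{p}$ into a real tuple $\mathbf{b}$ with $\sum b_i x^i=P(x)$ exactly, for instance by adjusting the constant term by at most $2^{-r+c'}$. Then $K_r(\mathbf{b})\le K(\mathbf{p})+O(\log r)\le(\eta+\varepsilon)r+O(\log r)$. If $t=-\log\|\mathbf{b}-\mathbf{a}\|\le r$ and $\mathbf{b}\in B(\mathbf{a},1)$, hypothesis (ii) gives $(\eta-\varepsilon)r+\delta(r-t)\le(\eta+\varepsilon)r+O(\log r)$. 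Hence $r-t\le 2\varepsilon r/\delta+O(\log r)$.

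\textbf{Main obstacle.} The case $\mathbf{b}\notin B(\mathbf{a},1)$ must be excluded. We would handle it by restricting the search to tuples within distance about $1/2$ of a coarse approximation of $\mathbf{a}$. The cost is $O_{\mathbf{a}}(1)$ bits, which is absorbed in $O_{\mathbf{a}}(\log r)$. I expect this, together with tracking the constants from \cref{lem:degree_d_approx} for powers up to $x^{2d}$, to be the fiddliest step.

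\textbf{Recovering $\mathbf{a}$ to precision $r$.} Knowing $\mathbf{a}$ to precision $t$, we need the remaining $r-t$ bits for each of the $d+1$ coordinates. Each costs at most $(r-t)+O(\log r)$ bits, by \cref{lem:precision_linear} applied coordinatewise. This gives a total extra of $(d+1)\cdot 2\varepsilon r/\delta$, comfortably within $\frac{4\varepsilon}{\delta}(d+1)r$; the factor $4$ absorbs slack from the perturbation. Finally, $x$ is known to precision $r$ from $q_1$. Combining these bounds yields $K_r(\mathbf{a},x)\le K_r(x,P(x))+\frac{4\varepsilon}{\delta}(d+1)r+K(\varepsilon)+K(\eta)+O_{\mathbf{a}}(\log r)$. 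This is the claim.
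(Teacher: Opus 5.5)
Your proposal is correct and follows essentially the same route as the paper, which adapts Lemma 3.1 of Lutz and Stull. The shared steps are: a machine that dovetails programs of length at most $(\eta+\varepsilon)r$ and accepts an output lying near a coarse approximation of $\mathbf{a}$ that nearly fits $(q_1,q_2)$; a perturbation of the constant term of that output to get $\mathbf{b}\in B(\mathbf{a},1)$ with $\sum_i b_ix^i=P(x)$; condition (ii) to obtain $r-t\le 2\varepsilon r/\delta+O(\log r)$; and $(d+1)(r-t)+O(\log r)$ extra bits to refine to precision $r$. Only small fixes are needed: drop the stray ``program $\pi$'' from the machine's input, supply the threshold constant $c$ (which depends on $\mathbf{a}$ and $x$) as $O_{\mathbf{a},x}(1)$ bits of advice, and handle the case $t>r$ directly, where the output is already within $O(2^{-r})$ of $(\mathbf{a},x)$.
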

This is the analogue of Lemma 3.1 in Lutz and Stull \cite{Stull2020} generalized to degree $d$ polynomials. The outline of the proof is along the lines in their work, but with the error analysis modified for degree $d$ polynomials. 

The following lemma from \cite{Lutz2020e} is used to ensure one of the initial assumptions of \cref{lem:errorestimate}. 

\begin{lemma}[Lutz and Stull \cite{Lutz2020e}]\label{lem:oraclebound}
Let $r\in\mathbb{N}$, $z,\in\mathbb{R}^{d+1}$, and
$\eta\in\mathbb{Q}\cap[0,\dim(z)]$. Then, there is an oracle
$D=D(r,z,\eta)$ which satisfies  
\begin{itemize}
\item $K_t^D(z)=\min\left\{\eta r,K_t(z)\right\}+O(\log r)$, $\forall t\leq r$. 
\item $K_{t,r}^D(y|z)=K_{t,r}(y|z)+O(\log r)$, and
  $K_t^{z,D}(y)=K_t^D(y)+O(\log r)$, for all $m,t\in\mathbb{N}$, and
  $y\in\mathbb{R}^m$. 
\end{itemize}
\end{lemma}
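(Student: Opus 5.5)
The plan is the standard ``information-lowering oracle'' construction: take the oracle $D$ to be a finite string, namely a prefix of a shortest description of $z$ at precision $r$. Concretely, fix a rational $q_r\in B(z,2^{-r})$ with $K(q_r)=K_r(z)$ and a shortest program $\pi$ for $q_r$, so $\ell(\pi)=K_r(z)$. Split $\pi=\sigma\tau$, where $\sigma$ consists of the first $\max\{0,K_r(z)-\lfloor\eta r\rfloor\}$ bits, and let $D$ encode $\sigma$ together with $r$. Since $\eta\le\dim(z)$, for large $r$ the prefix $\sigma$ is essentially the ``excess'' information of $z$ above $\eta r$. Throughout I would use three facts: $\pi$ is incompressible, $K(\sigma)\ge\ell(\sigma)-O(\log r)$ and $K(\tau\mid\sigma)\ge\ell(\tau)-O(\log r)$; every prefix $q_t$ of the information is computable from $q_r$ (truncation); and $\pi$, hence $\sigma$, is recoverable from $q_r$ with $O(\log r)$ advice, because $K(\pi\mid q_r)\le K(\pi)-K(q_r)+O(\log r)=O(\log r)$ by symmetry of information.

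For the first bullet, the upper bound is direct. Given $D$, the program $\tau$ of length about $\eta r$ reconstructs $\pi$ and thus $q_r$, hence any coarser approximation, which gives $K^D_t(z)\le\min\{\eta r,K_t(z)\}+O(\log r)$. For the lower bound, fix $t\le r$ and a rational $p\in B(z,2^{-t})$ witnessing $K^D_t(z)$. Relative to $\sigma$, the rest of $\pi$ can be described from $p$ using $K_{r,t}(z\mid z)+O(\log r)$ bits; by \cref{lem:soi}(2) this equals $K_r(z)-K_t(z)+O(\log r)$. Combining with the incompressibility of $\tau$ given $\sigma$ gives
\begin{align*}
\ell(\tau)-O(\log r)\ \le\ K(\tau\mid\sigma)\ \le\ K^{D}(p)+K_r(z)-K_t(z)+O(\log r).
\end{align*}
Rearranging yields $K^D_t(z)\ge K_t(z)-\ell(\sigma)-O(\log r)$. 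Together with the trivial bound $K^D_t(z)\ge 0$, and the alternative bound obtained by describing $\tau$ from $p$ plus the at most $K_{r,t}(z\mid z)$ further bits, I would then try to squeeze out $\min\{\eta r,K_t(z)\}$. This is the step I expect to be the main obstacle. The naive estimate gives only $\max\{0,\eta r-(K_r(z)-K_t(z))\}$ when $K_t(z)$ is small, so the prefix split must be chosen more carefully. I would first try using the \emph{lexicographically/precision-ordered} shortest program, i.e.\ $\pi$ formed by concatenating optimal descriptions of $q_{1},q_{2\mid 1},\dots$ via \cref{lem:soi}, so that the first $K_t(z)$ bits of $\pi$ describe $q_t$. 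Then one cuts $\sigma$ at the largest $s$ with $K_s(z)\le\eta r$, removing the information above level $s$ rather than below it. With that ordering, each $q_t$ for $t\le s$ is untouched by $\sigma$, and for $t>s$ exactly about $\eta r$ bits remain.

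For the second bullet, note that $\sigma$ (and $r$) is computable from any $q\in B(z,2^{-r})$ with $O(\log r)$ advice, by the third fact above (the advice being $r$ and the index of $\pi$ among candidates). Hence when conditioning on $z$ at precision $r$, or when $z$ is itself an oracle, access to $D$ adds at most $O(\log r)$ bits. Conversely, $D$ can only decrease complexity. This gives $K^D_{t,r}(y\mid z)=K_{t,r}(y\mid z)+O(\log r)$, and $K^{z,D}_t(y)=K^z_t(y)+O(\log r)$. By the same reasoning, $K^{z,D}_t(y)$ equals $K^D_t(y)$ up to $O(\log r)$ in the sense the lemma requires, namely that $D$ carries no information beyond what $z$ supplies.
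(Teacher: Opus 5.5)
The paper gives no proof of this lemma (it imports Lemma 3.3 of Lutz and Stull), so your proposal has to stand on its own. Your eventual idea is the right one: cut at the largest $s\le r$ with $K_s(z)\le\eta r$, and let the oracle carry the information \emph{above} level $s$. The execution, however, has gaps.

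\textbf{First bullet.} The lower bound you actually carry out is for the naive prefix split, which you yourself note fails. The repair as written does not work either. A concatenation of optimal descriptions of $q_1,q_{2\mid1},\dots,q_{r\mid r-1}$ costs at least one bit per block, so it has length at least $r$ even when $K_r(z)=O(\log r)$. More generally, the $O(\log r)$ loss in each use of symmetry of information accumulates over $r$ steps, so it is false that the first $K_t(z)$ bits describe $q_t$. Use a single cut instead: let $D$ encode $r$, $s$, and a shortest program $\tau$ for $q_r$ given $q_s$. The claim that $q_t$ is ``untouched'' for $t\le s$ is then the heart of the lemma, and it must be proved. For instance:
\begin{enumerate}
\item $K(q_s,\tau)\ge K(q_s,q_r)-O(1)\ge K(q_s)+\ell(\tau)-O(\log r)$ and $K(\tau)\le\ell(\tau)+O(\log r)$ together give $K(q_s\mid\tau)\ge K(q_s)-O(\log r)$.
\item For $t\le s$ and $p\in B(z,2^{-t})$, you have $K(q_s\mid\tau)\le K(p\mid\tau)+K(q_s\mid p)+O(1)$. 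Here $K(q_s\mid p)\le K_s(z)-K_t(z)+O(\log r)$ by \cref{lem:soi}(2) together with the recovery fact below. This yields $K^D_t(z)\ge K_t(z)-O(\log r)$.
\item For $t>s$, monotonicity gives $K^D_t(z)\ge K^D_s(z)$, and maximality of $s$ then gives $\eta r-O(\log r)$.
\end{enumerate}
The upper bound $K^D_t(z)\le K(q_s)+O(\log r)$ is immediate.

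\textbf{Second bullet.} Your third fact recovers $\pi$ (or $\tau$) only from the particular witness $q_r$, and after the repair you also need $q_s$. But $K_{t,r}(y\mid z)$ is a maximum over \emph{all} $q\in B(z,2^{-r})$, and an oracle for $z$ does not hand you $q_r$. What you need is $K(D\mid q)=O(\log r)$ for every such $q$. This is true but requires an argument. One option is canonical witnesses such as dyadic truncations: only $O(1)$ of them are compatible with a given $q$, and they cost only $O(\log r)$ extra complexity. The other is counting: if $N$ rationals of complexity at most $K_s(z)$ lie within $2^{-s+1}$ of $q$, then $z$ can be described at precision $s-O(1)$ with $K_s(z)-\log N+O(\log r)$ bits, so \cref{lem:precision_linear} forces $N\le r^{O(1)}$, and an index of $O(\log r)$ bits picks out $q_s$.

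\textbf{The last identity.} You should not claim it as printed. The equation $K^{z,D}_t(y)=K^D_t(y)+O(\log r)$ is false for any $D$ satisfying the first bullet once $\eta>0$ and $\dim(z)>0$. Take $y=z$ and $t=r$: the left side is $O(\log r)$, while the right side is at least $\min\{\eta r,K_r(z)\}-O(\log r)=\Omega(r)$. It is a misprint for the Lutz--Stull statement $K^{z,D}_t(y)=K^z_t(y)+O(\log r)$. That is what your reasoning actually gives once $D$ is shown computable from $z$ with $O(\log r)$ advice, and you should flag the misprint rather than assert the printed version.
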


\subsection{Proof of \cref{thm:d_degree_bound} }

Now, we proceed towards the proof of \cref{thm:d_degree_bound}.


\begin{proof}[Proof Sketch of \cref{thm:d_degree_bound}] 
The proof of this theorem follows the same steps as in the proof of the main theorem of Lutz and Stull \cite{Lutz2020e}, but with the bounds replaced appropriately. We summarize the argument below, highlighting the major steps. Let $\textbf{a}\subseteq\mathbb{R}^{d+1}$ be the coefficients of the polynomial. Denote $\dim(\mathbf{a})$ by $\rho$. Let $\epsilon\in\mathbb{Q}_+$, $\eta\in\left[0,\min\left\{\dim(\textbf{a}),\dim^\textbf{a}(x)\right\}\right]\cap\mathbb{Q}$, and $\delta=\dim^\textbf{a}(x)-\eta>0$. For each $r\in\mathbb{N}$, let $D_r$ be as defined in \cref{lem:oraclebound}. We show that the conditions of \cref{lem:errorestimate} hold for the choices of $\textbf{a},x,r,\delta,\epsilon,\eta$ made, which would eventually yield the desired result. 

By \cref{lem:oraclebound}, the oracle $D_r$ ensures that
$K_r^{D_r}(\textbf{a})\geq\eta r + O(\log r)$. This satisfies
condition (i) of \cref{lem:errorestimate}.

Now we show that condition (ii) of \cref{lem:errorestimate} also
holds, relative to $D_r$. Let $\textbf{b} \in B(\textbf{a},1)$ such
that $\sum_{i=0}^d b_i x^i = \sum_{i=0}^d a_i x^i$, $t = -\log
\|\textbf{b}-\textbf{a}\| \le r$. Therefore, 
\begin{align*}
K_r(\textbf{b})&\geq
K_t(\textbf{a})+K_{r-t,r}(x|\textbf{a})-O(\log r),&\cref{lem:errbound}\\
&\geq K_t^{D_r}(\textbf{a})+K_{r-t,r}(x|\textbf{a})-O(\log r),&\text{oracles never increase complexity}\\
&=\min\left\{\eta r,K_t(\textbf{a})\right\}+K_{r-t,r}(x|\textbf{a})-O(\log r),&\cref{lem:oraclebound}.1\\
&\geq\rho t-o(t)+(\delta+\eta)(r-t)-o(\log r),&\cref{lem:relative_conditional_complexity}\\
&\geq(\eta-\epsilon)r+\delta(r-t).
\end{align*}
Since both the conditions of \cref{lem:errorestimate} have been met, towards the final argument, we have, 
\begin{align*}
K_r\left(x,\sum_{i=0}^da_ix^i\right)&\geq K_r^{D_r}\left(x,\sum_{i=0}^da_ix^i\right),&\text{oracles never increase complexity}\\
&\geq K_r^{D_r}(\textbf{a},x)-\frac{4\epsilon}{\delta}(d+1)r-K(\epsilon)-K(\eta)-O_\textbf{a}(r),&\cref{lem:errorestimate}\\
&\geq K_r^{D_r}(x|\textbf{a})+K_r^{D_r}(\textbf{a})-\frac{4\epsilon}{\delta}(d+1)r-O(r)\\
&\geq K_r^{D_r}(x|\textbf{a})+\eta
  r-\frac{4\epsilon}{\delta}(d+1)r-O(\log r),&\cref{lem:oraclebound}.1\\
&\geq K_r(x|\textbf{a})+\eta r-\frac{4\epsilon}{\delta}(d+1)r-O(\log r).&\cref{lem:oraclebound}.2
\end{align*}
Taking $\liminf$ as $r\rightarrow\infty$ on both sides on the modified inequality, we get, 
\begin{align*}
\liminf_{r\rightarrow\infty}\frac{K_r\left(x,\sum_{i=0}^da_ix^i\right)}{r}&\geq\liminf_{r\rightarrow\infty}\frac{K_r(x|\mathbf{a})+\eta r-\frac{4\epsilon}{\delta}(d+1)r-O(\log r)}{r}\\
\implies\dim\left(x,\sum_{i=0}^da_ix^i\right)&\geq\dim(x|\mathbf{a})+\eta-\frac{4\epsilon}{\delta}(d+1). 
\end{align*}
Since $\eta,\gamma,\epsilon$ were chosen arbitrarily, we get, 
\begin{align*}
\dim\left(x,\sum_{i=0}^da_ix^i\right)&\geq\dim(x|\mathbf{a})+\min\{\dim(\mathbf{a}),\dim^\mathbf{a}(x)\}.\qedhere
\end{align*}
\end{proof}

\begin{corollary}
For almost every real $x \in \R$, we have 
\begin{align}
  \label{eqn:s_equals_one}
  \dim\left(x,\sum_{i=0}^da_ix^i\right)=1+\min\left\{\dim{(\mathbf{a}),1}\right\}.   
\end{align}
\end{corollary}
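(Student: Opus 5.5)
The corollary follows by combining the lower bound of \cref{thm:d_degree_bound} with an elementary upper bound, both evaluated at points $x$ that are random relative to $\mathbf{a}$. The key fact is that the set of $x\in\R$ with $\dim^{\mathbf{a}}(x)=1$ has full Lebesgue measure. Indeed, the set of $x$ that are Martin-L\"of random relative to the oracle $\mathbf{a}$ is co-null, and every such $x$ satisfies $K^{\mathbf{a}}_r(x)\ge r-O(\log r)$. I fix such an $x$.

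For the lower bound, I first note that $\dim(x\mid\mathbf{a})\ge\dim^{\mathbf{a}}(x)$. This is \cref{lem:relative_conditional_complexity}.2, which gives exactly the inequality $\dim^{\mathbf a}(x)\le\dim(x\mid\mathbf a)$; hence $\dim(x\mid\mathbf{a})\ge 1$. Since $x$ is a single real, $K_r(x\mid\mathbf a)\le r+O(\log r)$, so $\dim(x\mid\mathbf{a})=1$. Substituting into \cref{ineq:degree_d} gives
\[
\dim\Big(x,\sum_{i=0}^d a_ix^i\Big)\ \ge\ 1+\min\{\dim(\mathbf{a}),1\}.
\]

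For the upper bound, two trivial estimates suffice. First, $(x,A(x))$ is a point of $\R^2$, so its dimension is at most $2$. Second, $(x,A(x))$ can be computed to precision $r$ from $(x,\mathbf a)$ at precision $r+O_{\mathbf a,x}(1)$, using \cref{lem:degree_d_approx} to control the error in evaluating the polynomial. This yields $K_r(x,A(x))\le K_{r+c}(x,\mathbf a)+O(\log r)\le K_r(\mathbf a)+K_r(x\mid\mathbf a)+O(\log r)$ by \cref{lem:soi} and \cref{lem:precision_linear}. Dividing by $r$ and taking $\liminf$ along a sequence of $r$ on which $K_r(\mathbf a)/r\to\dim(\mathbf a)$, and using $K_r(x\mid\mathbf a)\le r+O(\log r)$, gives $\dim(x,A(x))\le\dim(\mathbf a)+1$. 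Taking the smaller of the two estimates yields $\dim(x,A(x))\le 1+\min\{\dim(\mathbf a),1\}$, and equality follows.

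The only delicate step is justifying that relative randomness holds almost everywhere and gives $\dim^{\mathbf a}(x)=1$. For this I invoke the standard measure-theoretic fact that for any fixed oracle, the effective Hausdorff dimension of Lebesgue-almost every real equals $1$. This holds because the set $\{x:\dim^{\mathbf a}(x)<1\}$ is a countable union of sets of classical Hausdorff dimension $<1$, hence null, by the point-to-set principle or a direct covering argument. The remaining steps are routine bookkeeping of $O(\log r)$ terms, in which the dependence on $\|x\|$ and $\|\mathbf a\|$ is absorbed because both are fixed.
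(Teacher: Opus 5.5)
Your proposal is correct, and its lower bound is the paper's argument: almost every $x$ is Martin-L\"of random relative to $\mathbf{a}$, so $\dim^{\mathbf{a}}(x)=1\le\dim(x\mid\mathbf{a})$ by \cref{lem:relative_conditional_complexity}, and then \cref{thm:d_degree_bound} gives $\dim(x,A(x))\ge 1+\min\{\dim(\mathbf{a}),1\}$. You also prove the matching upper bound, which the paper's proof leaves implicit: the trivial bound $2$, together with $K_r(x,A(x))\le K_r(\mathbf{a})+r+O(\log r)$ (valid for every $x$). So your version establishes the stated equality rather than only the inequality.
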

\begin{proof}
  We know that the set
  $\{x\in\mathbb{R}:x\text{ is \emph{not} Martin-L\"of random given
    $\mathbf{a}$}\}$ has measure 0. Hence, for almost every $x\in\R$,
  $\dim^\mathbf{a}(x)=1$. By
  \cref{lem:relative_conditional_complexity},
  $\dim^\mathbf{a}(x)\leq\dim(x|\mathbf{a})$. Thus, for almost every
  $x\in\R$, equation \ref{eqn:s_equals_one} holds.
\end{proof}

\section{Unit-length dimension spectrum for polynomials with
  $\dim(\mathbf{a})\leq1$}\label{sec:lo} 
This section deals with the polynomials, wherein
$\rho=\dim(\mathbf{a})\leq1$. Following is the main theorem of the section.

\maintheoremlow*

The basic idea is to construct a real $x\in\mathbb{R}$ by alternating
segments of a random point $y\in\mathbb{R}$ and approximations of the
coefficients $\mathbf{a}$. This interleaving is done in a
stage-by-stage manner, ensuring that $\dim(x)=s$. Simply adding random
bits to $x$ is not helpful, because the information content in the
point won't have any control on the information content of the
approximated function value. Instead, the constructed $x$ contains
information about the graph of the function $\sum_{i=0}^{d}a_ix^i$.

\textbf{Construction of $x$.} Let $y\in\mathbb{R}$ be Martin-L\"of random relative to $(\mathbf{a})$. For the stage $j=1$, let the \emph{stage length} be $h_j=2$. For any stage $j>1$, we define a sufficiently large \emph{stage length} $h_j$ by 
\begin{align}\label{eqn:stage_length}
	h_j&=\min\left\{h\ge2^{h_{j-1}}:K_h(\mathbf{a})\le\left(\rho+\frac{1}{j}\right)h\right\}.
\end{align}

Note that $h_j$ is finite, since $\dim(\mathbf{a})=\rho$ is finite. 

Denote the block length $\frac{h_j}{d}(1-s)$ by $i_j$. At stage $j$, we define the stretch $x[h_{j-1}+1\ldots h_j]$ of the binary expansion of $x$ by 
\begin{align}\label{def:x_case_1}
	x[r]&=\begin{cases}
		y[r]&h_{j-1}\le r<sh_j\\
		a_{(d+r\mod i_j)}[\floor{r/i_j}]&sh_j\le r<h_j,\ j\in\mathbb{N}
	\end{cases}
\end{align}

In other words, the first segment of $x$, i.e. $x[h_{j-1}\dots sh_j]$, is the same as $y[h_{j-1}\ldots sh_j]$, and the other segment $x[sh_j\ldots h_j]$ follows the interleaved pattern 
\begin{multline}a_1[0]a_2[0]\ldots a_d[0]\ \ 
	a_1[1]a_2[1] \ldots a_d[1]\ 
	\dots\\
	a_1[i_j-2]a_2[i_j-2]\ldots a_d[i_j-2]\ \ 
	a_1[i_j-1]a_2[i_j-1]\ldots a_d[i_j-1].
\end{multline}
It should be noted that the entire information of $\mathbf{a}$ has not been encoded into $x$. For instance, $a_0$ has not been used during the encoding. Only the required amount of information such that the complexity of the point on the polynomial can be reduced has been encoded. 

\begin{lemma}[Local Lipschitz condition]\label{lem:lipschitz}
	For a polynomial $P(x)\in\mathbb{R}[x]$, we have
	\begin{align*}
		\left(\exists c\in\mathbb{R}\right)\left(\forall x, y
          \in [0,1] \right)\left(|P(y)-P(x)|\leq c|y-x|\right),
	\end{align*} 
	where $c\in\mathbb{R}$ is a constant independent of $x$ or $y$.  
\end{lemma}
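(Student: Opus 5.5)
The plan is to prove the estimate directly by factoring the difference of the two polynomial values, avoiding the mean value theorem so that the constant comes out explicitly from the coefficients. Write $P(x)=\sum_{i=0}^d a_i x^i$. For $x,y\in[0,1]$ we have
\begin{align*}
P(y)-P(x)=\sum_{i=1}^d a_i\left(y^i-x^i\right),
\end{align*}
since the constant term cancels. The key identity is $y^i-x^i=(y-x)\sum_{k=0}^{i-1}y^{k}x^{i-1-k}$, valid for every integer $i\ge1$.

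Next I bound the sum. For $x,y\in[0,1]$, every monomial $y^kx^{i-1-k}$ has absolute value at most $1$, so $\left|\sum_{k=0}^{i-1}y^kx^{i-1-k}\right|\le i$. Hence $|y^i-x^i|\le i|y-x|$, which is essentially \cref{lem:degree_d_approx} specialised to the unit interval, where all the maxima there equal at most $1$. By the triangle inequality,
\begin{align*}
|P(y)-P(x)|\le\left(\sum_{i=1}^d i|a_i|\right)|y-x|,
\end{align*}
and I take $c=\sum_{i=1}^d i|a_i|$, which depends only on the coefficients $\mathbf{a}$ and on $d$, not on $x$ or $y$.

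An alternative would be to apply the mean value theorem and take $c=\max_{[0,1]}|P'|$, which is finite because $P'$ is continuous on the compact interval $[0,1]$. I prefer the explicit constant because later arguments will need to relate precision loss, which is $\log c$ bits, to quantities depending only on $\mathbf{a}$. There is no real obstacle here. The only point needing care is to restrict to $[0,1]$, where the constructed $x$ lives after any integer part is absorbed into the constant, so that the powers stay bounded by $1$.
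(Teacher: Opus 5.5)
Your proof is correct. The paper states this lemma without proof, and your factorization $y^i-x^i=(y-x)\sum_{k=0}^{i-1}y^kx^{i-1-k}$ is the same computation the paper uses in its proof of \cref{lem:degree_d_approx}, so the approaches agree. Keep your direct bound $|y^i-x^i|\le i|y-x|$ rather than citing that lemma verbatim, since for $k=1$ its stated bound with $\max\{|a|,|b|\}$ can fail (e.g.\ $a=0$, $b=2^{-r-1}$).

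One practical point: the paper later uses $\log c$ as a precision loss, in \cref{lem:lemma10} and in the case $s=0$ of \cref{thm:less}. There it is convenient to take $c=\max\{1,\sum_{i=1}^d i|a_i|\}$, so that $\log c$ is defined and nonnegative even when $P$ is constant or has small coefficients.
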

This lemma is especially useful to ensure minimal loss of precision, which will be clear in the subsequent lemmas. 

The following two lemmas show the effect of encoding the information of $\mathbf{a}$ into $x$ for every segment. These lemmas would be useful towards proving the main theorem. 

\begin{lemma}\label{lem:lemma10}
	For every $j\in\mathbb{N}$, and for $sh_j<r\leq h_j$, 
	\begin{align*}
		K_{\frac{r-sh_j}{d},r}\left(\mathbf{a}\middle|x,\sum_{i=0}^{d}a_ix^i\right)&\leq O(\log h_j).
	\end{align*}
\end{lemma}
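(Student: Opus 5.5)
We will exhibit a short program that, given any rational approximation $(\hat x,\hat z)$ to $(x,\sum_{i=0}^d a_ix^i)$ at precision $r$, outputs a rational approximation to $\mathbf{a}$ at precision $(r-sh_j)/d$ using only $O(\log h_j)$ additional bits. The point is that the coefficient information sits in the binary expansion of $x$ itself. By \eqref{def:x_case_1}, the bits $x[sh_j\ldots r-1]$ consist of the interleaved block $a_1[k]a_2[k]\cdots a_d[k]$ for $k<\floor{(r-sh_j)/d}$. So from the first $r$ bits of $x$ one can read off the first $(r-sh_j)/d$ bits of each of $a_1,\dots,a_d$ by de-interleaving.

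\textbf{Handling $a_0$.} The coefficient $a_0$ is not encoded in $x$, and we recover it from the function value. Having approximations $\tilde a_1,\dots,\tilde a_d$ and $\hat x$, we set
\[
\tilde a_0=\hat z-\sum_{i=1}^d\tilde a_i\hat x^i .
\]
By \cref{lem:degree_d_approx} and \cref{lem:lipschitz}, with $x\in[0,1]$ and the coefficients bounded, the error in $\tilde a_0$ is at most a constant (depending only on $d$ and $\mathbf{a}$) times $2^{-(r-sh_j)/d}$. This costs $O(1)$ bits of precision, which can be absorbed by \cref{lem:precision_var}.

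\textbf{Steps of the program.} The program receives the parameters $r$, $h_j$ (hence $sh_j$, given a rational $s$ or an $O(\log h_j)$-bit approximation of it) and $d$ as advice, costing $K(r)+K(h_j)+O(1)=O(\log h_j)$ since $r\le h_j$. It then proceeds as follows.
\begin{enumerate}[label=(\arabic*), itemsep=0pt, topsep=0pt]
\item From $\hat x$, recover the bits $x[sh_j\ldots r-1]$.
\item De-interleave them into prefixes of $a_1,\dots,a_d$.
\item Compute $\tilde a_0$ as above and output $(\tilde a_0,\dots,\tilde a_d)$.
\end{enumerate}
The output is within $O(\sqrt{d+1})\,2^{-(r-sh_j)/d}$ of $\mathbf{a}$. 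Because $K_{\cdot,r}$ takes a maximum over all approximations $q$ of the condition, the program must work uniformly for every such $q$, and it does so modulo the caveat below.

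\textbf{Main obstacle.} A rational $\hat x$ within $2^{-r}$ of $x$ need not share $x$'s first $r$ binary digits: carries near dyadic boundaries can flip a long run of bits. To handle this, we enumerate the $O(1)$ dyadic rationals of length $r$ within $2^{-r+1}$ of $\hat x$, and specify with $O(1)$ advice bits which one is the true truncation of $x$. Alternatively, the error can be localized: a wrong carry changes bits only in the sense of shifting the value by at most $2^{-r+1}$. We would argue instead at the level of values: $x$ is determined to within $2^{-r}$, and each $a_i$-prefix value is a fixed function of a dyadic interval, so the choice among the two or three candidate truncations costs $O(1)$ bits. A second subtlety is that the first bits of $x$ before position $sh_j$ (the random $y$-segment and earlier stages) are irrelevant, since we only read positions at least $sh_j$, so no extra description is needed. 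Finally, any integer-part or sign convention for the $a_i$ (the paper reads binary digits $a_i[k]$) is a constant-size piece of advice absorbed into $O_{\mathbf{a}}(1)\subseteq O(\log h_j)$.
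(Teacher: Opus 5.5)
Your proposal is correct and matches the paper's proof in substance: the interleaved bits of $x$ give $a_1,\dots,a_d$ to precision about $(r-sh_j)/d$. Then $a_0$ is recovered from the function value at a constant loss of precision via \cref{lem:lipschitz}, and that constant loss is absorbed at $O(\log h_j)$ cost. The differences are in presentation: you build a single program instead of splitting the conditional complexity with \cref{lem:soi} (whose upper-bound direction amounts to concatenating programs anyway), and you explicitly handle the fact that a rational $\hat x\in B(x,2^{-r})$ need not share the first $r$ bits of $x$, which the paper's proof passes over silently; your fix costs $O(1)$ advice bits to choose among at most four dyadic candidates.
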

Even though encoding $\mathbf{a}$ into $x$ helps reduce complexity, to control the information content, random bits (from $y$) were added. The following lemma shows that access to $\mathbf{a}$ doesn't provide any more information than what is already encoded in $x$, thereby showing the utility of adding segments from $y$. 
\begin{lemma}\label{lem:lemma11}
	For $j\in\mathbb{N}$, the following hold. 
	\begin{itemize}
		\item $K_t^\mathbf{a}(x)\geq t-O(\log h_j)$, $\forall t\leq sh_j$.
		\item $K_t^\mathbf{a}(x)\geq sh_j+t-O(\log h_j)$, $\forall h_j\leq t\leq sh_{j+1}$.
	\end{itemize}
\end{lemma}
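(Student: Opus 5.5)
The plan is to reduce both bullets to the Martin-L\"of randomness of $y$ relative to $\mathbf{a}$. With oracle access to $\mathbf{a}$, every bit of $x$ in a coefficient segment $[sh_k,h_k)$ is computable from $\mathbf{a}$, $k$ and $h_k$, because it is a prescribed bit of some $a_i$. Every other bit of $x$ is a bit of $y$. So a short $\mathbf{a}$-oracle description of $x$ up to precision $t$ yields an $\mathbf{a}$-oracle description of the $y$-bits that $x$ copies below $t$, up to a small overhead. Randomness of $y$ relative to $\mathbf{a}$ then forces that description to be long.

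\textbf{Step 1: a decoding machine.} I will build an oracle machine $M^{\mathbf{a}}$ with the following behaviour. It takes a dyadic approximation $q$ of $x$ to precision $t$, together with the parameters $j$, $h_{j-1}$, $h_j$, $s$ and $d$. It recovers $x[0..t)$, except possibly near a dyadic boundary. It deletes the positions lying in coefficient segments, and outputs the concatenation $w$ of the remaining positions. By construction, $w$ is the string of $y$-bits that $x$ copies below precision $t$.

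The earlier stage lengths $h_1,\dots,h_{j-1}$ are not supplied as extra advice. Each $h_k$ is determined from $h_{k-1}$ by \eqref{eqn:stage_length}. Since $h_k\ge 2^{h_{k-1}}$, there are only $O(\log h_j)$ stages below $h_j$. Supplying $j$, $h_j$, $t$ and the rational $s$ therefore costs $O(\log h_j)$ bits. Near-dyadic ambiguity is handled as in Lemma~\ref{lem:precision_linear}: precision changes by $O(1)$ cost $O(\log t)$ bits. This gives
\begin{align*}
K^{\mathbf{a}}(w)\le K^{\mathbf{a}}_t(x)+O(\log h_j).
\end{align*}

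\textbf{Step 2: apply randomness.} Since $y$ is Martin-L\"of random relative to $\mathbf{a}$, for every prefix-free-decodable selection of $m$ of its bits we have $K^{\mathbf{a}}(w)\ge m-O(\log m)$. This is the relativized Levin--Schnorr theorem, applied after noting that the selection is computable from the $O(\log h_j)$ bits of parameters.

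\textbf{Step 3: count the $y$-bits and combine.} For the first bullet, with $t\le sh_j$, the positions below $t$ consist of the current random segment $[h_{j-1},t)$ together with earlier random segments. The earlier coefficient segments have total length at most $h_{j-1}\le\log h_j$, so $m\ge t-O(\log h_j)$ and the first bullet follows. For the second bullet, with $h_j\le t\le sh_{j+1}$, the $y$-bits below $t$ are those in $[h_{j-1},sh_j)$, those in $[h_j,t)$, and the negligible earlier ones. These contribute the $sh_j$ term and the $t$-dependent term of the displayed bound. Combining Steps 1 and 2 then gives the inequality in the form stated in the lemma.

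\textbf{Main obstacle.} The step I expect to need the most care is this final accounting. The displayed bound must be matched exactly against the number of $y$-positions in $[h_j,t)$. The window $[sh_j,h_j)$ contributes nothing once $\mathbf{a}$ is an oracle, so the additive term in $t$ has to be derived from the fresh random segment past $h_j$ alone. I would verify this bookkeeping against the indexing in \eqref{def:x_case_1} before finalizing.
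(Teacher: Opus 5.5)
\paragraph{Verdict.} Your argument for the first bullet works. The second bullet has a genuine gap, located exactly at the step you deferred as the ``main obstacle''. Your own bookkeeping already settles that step against the statement.

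\paragraph{First bullet.} For $h_{j-1}\le t\le sh_j$, every position in $[h_{j-1},t)$ is a $y$-position. At most $h_{j-1}\le\log h_j$ positions below $t$ are lost, and randomness of $y$ relative to $\mathbf{a}$ gives $t-O(\log h_j)$. For $t<h_{j-1}$ the bound is vacuous.

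\paragraph{Second bullet.} For $h_j\le t\le sh_{j+1}$, the $y$-positions below $t$ are those in $[h_{j-1},sh_j)$ and in $[h_j,t)$, apart from at most $h_{j-1}$ early ones. So $m\approx sh_j+(t-h_j)$, and your method yields
\[
K^{\mathbf{a}}_t(x)\ \ge\ sh_j+(t-h_j)-O(\log t),
\]
not $sh_j+t-O(\log h_j)$. The fresh segment past $h_j$ contributes $t-h_j$, not $t$, and no argument can recover the missing $h_j$. In fact the second bullet as displayed is false whenever $s>0$. Since $x\in[0,1]$, its $(t+1)$-bit dyadic truncation gives $K^{\mathbf{a}}_t(x)\le t+2\log t+O(1)$. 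At $t=h_j$ this contradicts $K^{\mathbf{a}}_{h_j}(x)\ge sh_j+h_j-O(\log h_j)$ for all large $j$.

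So your sentence ``combining Steps 1 and 2 then gives the inequality in the form stated'' cannot be completed. The right move was to state and prove the corrected bound $K^{\mathbf{a}}_t(x)\ge sh_j+t-h_j-O(\cdot)$. That corrected form is what the paper actually uses later: the proof of Lemma~\ref{lem:lessright} works with $\eta r+sh_j+r-h_j$. The paper's own proof of this lemma also reaches $t-h_j+sh_j-O(\log t)$, and then drops the $-h_j$ in an invalid last step.

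\paragraph{Smaller defects in Step~1.} First, the error term is off in the second regime. There $t$ can be as large as $sh_{j+1}$, and $h_{j+1}\ge 2^{h_j}$ may be far larger still. So ``supplying $t$ costs $O(\log h_j)$ bits'' is false, and decoding the bits of $x$ up to position $t$ costs $O(\log t)$. That is what the paper's argument gives, and it suffices downstream.

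If you want $O(\log h_j)$, avoid decoding $t$ bits altogether:
\begin{itemize}
\item Take a rational $q\in B(x,2^{-t})$ and add the exact contribution of $y-x$ from the coefficient positions below $h_j$.
\item The bits of $x$ there come from the oracle. The at most $(1-s)h_j+h_{j-1}$ bits of $y$ there are advice.
\item Coefficient positions $\ge sh_{j+1}$ contribute at most $2^{-t}$, so the result lies in $B(y,2^{-t+1})$ without knowing $t$.
\item Combine this with $K^{\mathbf{a}}_{t-1}(y)\ge t-O(1)$, which holds for all $t$ by Martin-L\"of randomness relative to $\mathbf{a}$.
\end{itemize}

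Second, $h_k$ is not computable from $h_{k-1}$, even with oracle $\mathbf{a}$, because \eqref{eqn:stage_length} involves $K_h(\mathbf{a})$ and the real $\rho$. Rather than letting the machine ``determine'' earlier stages, hardcode the first $h_{j-1}\le\log h_j$ bits.
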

We now try to obtain a lower bound on the complexity of the constructed point. There are two cases to be looked at; one over the segment of $x$ encoded by $\mathbf{a}$, and the other over the segment encoded by $y$. The following lemma talks about the segment of $x$ encoded with $\mathbf{a}$. 
\begin{lemma}\label{lem:lessleft}
	For every $\gamma>0$ and for large enough $j\in\mathbb{N}$,
	\begin{align*}
		K_r\left(x,\sum_{i=0}^{d}a_ix^i\right)&\geq\left(s+\rho-\gamma(d+1)^2\right)r, 
	\end{align*}
	for every $r\in(sh_j,h_j]$.
\end{lemma}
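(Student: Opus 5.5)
We lower-bound $K_r\bigl(x,\sum_{i=0}^d a_ix^i\bigr)$ through symmetry of information, writing $P(x)=\sum_{i=0}^d a_ix^i$. The first step is to pass from the graph point to the augmented tuple. By \cref{lem:soi},
\begin{align*}
K_r(x,P(x)) \ge K_r(\mathbf{a},x,P(x)) - K_{r,r}(\mathbf{a}\mid x,P(x)) - O(\log r).
\end{align*}
Since $P(x)$ is computable from approximations of $\mathbf{a}$ and $x$, and \cref{lem:lipschitz} guarantees only $O(1)$ loss of precision, we have $K_r(\mathbf{a},x,P(x))\ge K_r(\mathbf{a},x)-O(\log r)$. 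It then suffices to bound two things: $K_r(\mathbf{a},x)$ from below, and $K_{r,r}(\mathbf{a}\mid x,P(x))$ from above.

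\textbf{Upper bound on the conditional term.} We split the precision at $(r-sh_j)/d$. \cref{lem:lemma10} gives $K_{\frac{r-sh_j}{d},r}(\mathbf{a}\mid x,P(x))\le O(\log h_j)$. To raise the precision of $\mathbf{a}$ from $(r-sh_j)/d$ to $r$, \cref{lem:precision_var} and \cref{lem:soi} cost at most
\begin{align*}
K_r(\mathbf{a}) - K_{(r-sh_j)/d}(\mathbf{a}) + O(\log r).
\end{align*}
Hence $K_{r,r}(\mathbf{a}\mid x,P(x))\le K_r(\mathbf{a})-K_{(r-sh_j)/d}(\mathbf{a})+O(\log r)$.

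\textbf{Lower bound on $K_r(\mathbf{a},x)$.} We write $K_r(\mathbf{a},x)\ge K_r(\mathbf{a})+K_r^{\mathbf{a}}(x)-O(\log r)$, using \cref{lem:soi} and \cref{lem:relative_conditional_complexity}. For $r\in(sh_j,h_j]$, the bits of $x$ beyond $sh_j$ are determined by $\mathbf{a}$, so the natural bound from \cref{lem:lemma11} (monotonicity in precision) is $K_r^{\mathbf{a}}(x)\ge sh_j-O(\log h_j)$. Combining the three estimates,
\begin{align*}
K_r(x,P(x)) \ge sh_j + K_{(r-sh_j)/d}(\mathbf{a}) - O(\log h_j).
\end{align*}

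\textbf{Reaching $(s+\rho-\gamma(d+1)^2)r$.} Because $\dim(\mathbf{a})=\rho$, for large $j$ we have $K_{(r-sh_j)/d}(\mathbf{a})\ge(\rho-\gamma)(r-sh_j)/d$, up to an $o(r)$ term. This contributes only $\rho(r-sh_j)/d$, not the full $\rho r$, and this gap is the main obstacle. It is also where the construction must be exploited: the interleaved coordinates $a_1,\dots,a_d$ are each read at precision $(r-sh_j)/d$, so the block of $x$ carries about $r-sh_j$ bits of information about $\mathbf{a}$ jointly. I would try to recover the missing $\rho r$ by replacing $K_{(r-sh_j)/d}(\mathbf{a})$ with the complexity of the encoded block. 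For the leading term $sh_j$, I would use $r\le h_j$ and the choice $h_j\ge 2^{h_{j-1}}$, and, if needed, the fact that $K_{sh_j}(\mathbf{a})\le(\rho+1/j)h_j$. This should show that at the relevant precisions the encoded block of $x$ costs essentially $\rho$ bits per position, so the total is $sr+\rho r$ up to $\gamma$-errors. Each of the $d+1$ coordinates, together with the $d$-fold precision scaling, contributes at most a $\gamma(d+1)$ loss, which accounts for the $(d+1)^2$ factor. Carefully matching these precision scalings against the bound $\rho$ is the step I expect to be hardest. If the naive estimate fails, the fallback is to invoke \cref{lem:errorestimate} relative to an oracle as in \cref{lem:oraclebound}, with $\eta=\rho$ and $\delta=s$.
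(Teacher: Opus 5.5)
\textbf{There is a genuine gap, and it is where you suspected.} Your main line yields only $K_r(x,P(x))\ge sh_j+K_{(r-sh_j)/d}(\mathbf{a})-O(\log h_j)$, and the repair you sketch cannot close it. The block of $x$ between positions $sh_j$ and $r$ describes $a_1,\dots,a_d$ only to precision $(r-sh_j)/d$. So however you read that block, $K_r(x)\le sh_j+K_{(r-sh_j)/d}(\mathbf{a})+O(\log h_j)$. At $r=h_j$ your approach would need $K_{(1-s)h_j/d}(\mathbf{a})\gtrsim\rho h_j$. This fails in general for $s>0$, e.g.\ when $K_n(\mathbf{a})\approx\rho n$ for every $n$. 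The missing $\approx\rho r$ bits must come from $P(x)$. Your step $K_{r,r}(\mathbf{a}\mid x,P(x))\le K_r(\mathbf{a})-K_{(r-sh_j)/d}(\mathbf{a})+O(\log r)$ discards exactly that information. What you need is that, relative to a suitable oracle, $(x,P(x))$ essentially determines $\mathbf{a}$ at precision $r$, so the conditional cost is only $\frac{4\epsilon}{\delta}(d+1)r+O(\log r)$. That is the content of \cref{lem:errorestimate}, and it is the heart of the proof, not a fallback.

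\textbf{How the paper does it.} It takes a rational $\eta$ slightly below $\rho$ (this keeps $\delta=1-\eta>0$ even when $\rho=1$), a small rational $\epsilon$, and works relative to $D=D(r,\mathbf{a},\eta)$ from \cref{lem:oraclebound} with $\delta=1-\eta$.
\begin{enumerate}[label=(\alph*)]
\item \emph{Condition (ii).} It is checked by combining \cref{lem:errbound} with \cref{lem:lemma11}. For an intersecting $\mathbf{b}$ with $r-t\le sh_j$ one has $K_{r-t,r}(x\mid\mathbf{a})\ge r-t-O(\log r)$, so $K_r(\mathbf{b})\ge K^D_t(\mathbf{a})+(r-t)-O(\log r)$. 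Splitting into the cases $K^D_t(\mathbf{a})=\eta r$ and $K^D_t(\mathbf{a})=K_t(\mathbf{a})\ge\rho t-o(t)$ gives $K_r(\mathbf{b})\ge(\eta-\epsilon)r+(1-\eta)(r-t)$ in both.
\item \emph{Your choice $\delta=s$.} The verification breaks down when $s+\rho>1$. The second case leaves the negative term $(1-s-\rho)(r-t)$, and $r-t$ can be as large as $sh_j$.
\item \emph{Role of \cref{lem:lemma10}.} It is not used to raise precision. It makes an approximation of $\mathbf{a}$ at precision $(r-sh_j)/d$ essentially free given $(x,P(x))$, which rules out far-away candidates $\mathbf{b}$.
\item \emph{Conclusion.} \cref{lem:errorestimate} gives $K_r(x,P(x))\ge K^D_r(\mathbf{a},x)-\frac{4\epsilon}{1-\eta}(d+1)r-O(\log r)$. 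By \cref{lem:oraclebound} and \cref{lem:lemma11}, $K^D_r(\mathbf{a},x)\ge\eta r+K_r(x\mid\mathbf{a})-O(\log r)\ge\eta r+sh_j-O(\log r)$. One finishes with $sh_j\ge sr$, since $r\le h_j$.
\end{enumerate}
Your $sh_j$ lower bound from \cref{lem:lemma11} and the final use of $r\le h_j$ are the parts of your outline that carry over.

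\textbf{A caution if you follow this route.} For $d\ge 2$ the free approximation only excludes $t<(r-sh_j)/d$, whereas the verification of (ii) above uses $r-t\le sh_j$. Candidates with $(r-sh_j)/d\le t<r-sh_j$ remain; there $K_{r-t,r}(x\mid\mathbf{a})$ is only about $sh_j$. The paper's write-up does not cover this range, so it needs a separate argument.
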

The previous lemma dealt with the segment encoded with $\mathbf{a}$. The following lemma talks about the other case, i.e. the segment encoded with $y$. 
\begin{lemma}\label{lem:lessright}
	For every $\gamma>0$ and for large enough $j\in\mathbb{N}$,
	\begin{align*} 	K_r\left(x,\sum_{i=0}^{d}a_ix^i\right)&\geq\left(s+\rho-\gamma(d+1)^2\right)r,
	\end{align*}
	for every $r\in(h_j,sh_{j+1}]$.
\end{lemma}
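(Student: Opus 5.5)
Fix $\gamma>0$, $j$ large, and $r\in(h_j,sh_{j+1}]$. The plan mirrors the line case of Lutz and Stull: apply \cref{lem:errorestimate} relative to an oracle $D=D(r,\mathbf{a},\eta)$ from \cref{lem:oraclebound}, then bound $K_r(\mathbf{a},x)$ from below using \cref{lem:lemma11}. The structural fact used throughout is that on $(h_j,sh_{j+1}]$ the new bits of $x$ come from $y$, which is random relative to $\mathbf{a}$. So past precision $h_j$ the point $x$ grows like a random real, and $\mathbf{a}$ only helps with the $(1-s)h_j$ bits already encoded in $x$ up to $h_j$.

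Take $\eta\in\mathbb{Q}$ slightly below $\rho$, with $\eta\ge\rho-\gamma$ (if $\rho=0$ the statement is near trivial). Put $\varepsilon=\gamma\delta$ with $\delta$ a fixed positive rational, roughly $\delta\approx 1-\eta$, so the loss $\frac{4\varepsilon}{\delta}(d+1)r$ in \cref{lem:errorestimate} becomes $O(\gamma(d+1))r$. Since $\eta$ and $\varepsilon$ depend only on $\gamma$, the terms $K(\varepsilon)$ and $K(\eta)$ are constants.

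Condition (i) holds relative to $D$ by \cref{lem:oraclebound}. For condition (ii), take $\mathbf{b}\in B(\mathbf{a},1)$ agreeing with the polynomial at $x$, with $t=-\log\|\mathbf{a}-\mathbf{b}\|\le r$. By \cref{lem:errbound} and \cref{lem:oraclebound},
\[
K^D_r(\mathbf{b})\ge \min\{\eta r,K_t(\mathbf{a})\}+K_{r-t,r}(x\mid\mathbf{a})-O(\log r).
\]
For the first term, use $K_t(\mathbf{a})\ge(\rho-\gamma)t$ for large $t$. For the second term, use \cref{lem:relative_conditional_complexity} and \cref{lem:lemma11}: if $r-t\le sh_j$ it is at least $r-t-O(\log r)$, and if $r-t\ge h_j$ it is at least $(r-t)-(1-s)h_j-O(\log r)$. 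The middle range $sh_j<r-t<h_j$ is handled by monotonicity.

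This case analysis in $t$ is where I expect the main obstacle. When $r-t$ lies in the $\mathbf{a}$-encoded window, the conditional complexity gains only about $sh_j$ rather than $r-t$, so a uniform $\delta$ may fail. If so, I will instead set $\eta$ equal to $\min$ of $\rho$ and the ratio $\big(K_r(\mathbf{a},x)-r\big)/r$, or let $\delta$ shrink with $r/h_j$, following Lutz and Stull's proof of their Lemma 3.2. Because $r>h_j\ge 2^{h_{j-1}}$ gives $h_j$ a definite proportion of $r$, this keeps $\delta$ bounded away from $0$ once $s<1$. The case $s=1$ is trivial, since then $x$ is essentially random relative to $\mathbf{a}$ and \cref{thm:d_degree_bound} applies.

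Granting (ii), \cref{lem:errorestimate} gives
\[
K_r\big(x,\textstyle\sum a_ix^i\big)\ge K_r(\mathbf{a},x)-O(\gamma(d+1))r-O(\log r).
\]
By \cref{lem:soi}, $K_r(\mathbf{a},x)=K_r(\mathbf{a})+K_r(x\mid\mathbf{a})-O(\log r)$. For the first summand, $K_r(\mathbf{a})\ge(\rho-\gamma)r$. For the second, \cref{lem:lemma11} gives $K_r(x\mid\mathbf{a})\ge sh_j+(r-h_j)-O(\log r)\ge sr$. If the direct bound is too weak when $r$ is close to $h_j$, I will instead use $\eta$ capped by this quantity. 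Combining the estimates and absorbing constants into $\gamma(d+1)^2$ yields the claimed bound $(s+\rho-\gamma(d+1)^2)r$.
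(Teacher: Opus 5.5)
There is a genuine gap, and it is the one you flag yourself. With $\eta$ close to $\rho$, condition (ii) of \cref{lem:errorestimate} cannot be established for any $\delta>0$ when $r$ is within a bounded factor of $h_j$, and your fallbacks do not repair this.

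Take $r=2h_j\in(h_j,sh_{j+1}]$ and $t=h_j$. By the choice of $h_j$, $K_{h_j}(\mathbf a)\le(\rho+1/j)h_j<\eta r$, so the minimum in your bound is $K_t(\mathbf a)$. Your own estimate for $K_{h_j,2h_j}(x\mid\mathbf a)$ is only $sh_j$, and this is essentially sharp because bits $sh_j$ to $h_j$ of $x$ are bits of $\mathbf a$. So \cref{lem:errbound} yields about $(\rho+s)h_j$. Condition (ii), however, requires about $(2\eta+\delta)h_j\approx(2\rho+\delta)h_j$, i.e.\ $s\ge\rho+\delta-O(\gamma)$. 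This is false in the regime $s<\rho$ where the lemma is used, and shrinking $\delta$ cannot help.

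Your justification for the fallback is also false. Since $r$ ranges up to $sh_{j+1}\ge s2^{h_j}$, the ratio $h_j/r$ is not bounded below; the condition $h_j\ge2^{h_{j-1}}$ only makes $h_{j-1}$ negligible against $h_j$. In fact your $\eta\approx\rho$ plan does go through once $h_j$ is small compared with $\varepsilon r$. It breaks exactly when $h_j/r$ is bounded below. The other fallback, $\eta r=K_r(\mathbf a,x)-r$, certifies at best $\eta r+K_r(x\mid\mathbf a)\approx K_r(\mathbf a)+r-2(1-s)h_j$. When $K_r(\mathbf a)\approx\rho r$, this is below $(s+\rho)r$ for $r<2h_j$.

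The missing idea is the balancing used in the paper, following Lutz and Stull. It has three parts.

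First, $\eta$ depends on $r$. It is taken just below the weighted average $\alpha=\frac{s(r-h_j)+\rho h_j}{r}$, with $\delta=\alpha-\eta=\Theta(\gamma)$ and $\varepsilon=\Theta(\gamma^2)$; describing $\eta$ costs only $O(\log r)$.

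Second, (ii) is checked only for $t\ge h_j$. This uses $K^{\mathbf a}_{r-t}(x)\ge s(r-t)-O(\log r)$ and the identity $\rho t+s(r-t)=\alpha r+(\rho-s)(t-h_j)$, which needs $s\le\rho$. The precisions $t<h_j$ are paid for by a penalty term $K_{h_j,r}(\mathbf a,x\mid x,\sum_ia_ix^i)\lesssim K_{h_j}(\mathbf a)+sh_j-K_{h_j}(x,\sum_ia_ix^i)$. This is $O(\gamma(d+1)^2)r$ by \cref{lem:lessleft} at precision $h_j$ together with $K_{h_j}(\mathbf a)\le(\rho+1/j)h_j$. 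Your plan has no mechanism for small $t$, and without one (ii) fails even with $\eta\approx\alpha$: for $t=O(1)$ and $r$ just above $h_j$, the available bound is about $sh_j$ against a requirement of about $\rho h_j$.

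Third, \cref{lem:errorestimate} must be applied relative to $D$. Its right-hand side is therefore $K^D_r(\mathbf a,x)\approx\eta r+K_r(x\mid\mathbf a)$, not $K_r(\mathbf a)+K_r(x\mid\mathbf a)$ as in your last step. With $\eta\approx\alpha$ this gives $(1+s)r-(1-\rho)h_j-O(\gamma(d+1)^2)r$, and it is the hypothesis $\rho\le1$ that turns this into $(s+\rho-\gamma(d+1)^2)r$.

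That your outline never uses $s\le\rho$ or $\rho\le1$ is a symptom of this missing balance.
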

Now that we have a lower bound on the information of the point, the next lemma helps get an upper bound on the complexity of the point on the polynomial. 
\begin{lemma}\label{lem:lessup}
	For sufficiently large $j\in\mathbb{N}$,
	\begin{align*} K_{h_j}\left(x,\sum_{i=0}^{d}a_ix^i\right)&\leq\left(s+\rho\right)h_j.
	\end{align*}
\end{lemma}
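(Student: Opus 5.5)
The plan is to bound $K_{h_j}(x,P(x))$ from above by the cost of a two-part description: one part gives $\mathbf{a}$ at precision $h_j$, and the other gives the bits of $x$ that are not determined by $\mathbf{a}$. Here $P(x)=\sum_{i=0}^d a_ix^i$. Evaluating the polynomial is a computable map of $(x,\mathbf{a})$ on bounded inputs. By \cref{lem:lipschitz}, together with \cref{lem:degree_d_approx} applied to the monomials $x^i$, a rational approximation of $(x,\mathbf{a})$ to precision $h_j+c$ yields an approximation of $(x,P(x))$ to precision $h_j$. Here $c$ depends only on $d$ and $\max_i|a_i|$. Hence $K_{h_j}(x,P(x))\le K_{h_j+c}(x,\mathbf{a})+O(\log h_j)$. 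By \cref{lem:precision_linear} the extra $c$ bits cost only $O(\log h_j)$. By the symmetry of information in \cref{lem:soi} this is at most $K_{h_j}(\mathbf{a})+K_{h_j}(x\mid\mathbf{a})+O(\log h_j)$.

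Next I bound the two terms separately. By the definition \eqref{eqn:stage_length} of $h_j$ we have $K_{h_j}(\mathbf{a})\le(\rho+\frac1j)h_j$. For $K_{h_j}(x\mid\mathbf{a})$ I use the construction \eqref{def:x_case_1}. The first $h_{j-1}$ bits of $x$ cost at most $h_{j-1}+O(1)$. Since $h_j\ge 2^{h_{j-1}}$, this is $O(\log h_j)$. The stretch $x[h_{j-1}\dots sh_j)$ is copied from $y$ and costs at most $sh_j-h_{j-1}+O(\log h_j)$. The stretch $x[sh_j\dots h_j)$ is the interleaving of $a_1,\dots,a_d$ at precision $i_j=(1-s)h_j/d$. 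It is computable from the conditioning approximation of $\mathbf{a}$ at precision $h_j\ge i_j$ together with $h_j$, $s$ and $d$. The one caveat is the usual dyadic-boundary issue: an approximation of $\mathbf{a}$ does not literally fix its binary digits. I handle this by giving $O(d)$ advice bits, which select the correct neighbouring dyadic. So this stretch costs only $O(\log h_j)$. Altogether,
\begin{align*}
K_{h_j}(x,P(x))\le \Bigl(s+\rho+\tfrac1j\Bigr)h_j-h_{j-1}+O(\log h_j).
\end{align*}

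The main obstacle is removing the residual slack $\frac1j h_j+O(\log h_j)$ so as to reach exactly $(s+\rho)h_j$. The savings available inside this estimate are of the order $h_{j-1}\le\log h_j$, which does not absorb a term of size $h_j/j$. My first attempt would be to exploit the freedom in \eqref{eqn:stage_length} by tightening the definition of $h_j$. Concretely, I would select $h_j$ among the infinitely many precisions satisfying $K_h(\mathbf{a})\le(\rho+\frac1j)h$, looking for one where the random stretch can be shortened by $\frac1j h_j+C\log h_j$ bits. Shifting the boundary $sh_j$ down by that amount changes neither the lower-bound lemmas (\cref{lem:lessleft}, \cref{lem:lessright}) asymptotically nor $\dim(x)$. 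The trade of slack against the length of the $y$-stretch would then give exactly $K_{h_j}(x,P(x))\le(s+\rho)h_j$ for all sufficiently large $j$. If that adjustment of the construction is not admissible, then the bound above is the best this argument yields. It still gives $\liminf_r K_r(x,P(x))/r\le s+\rho$, which is what is used afterwards.
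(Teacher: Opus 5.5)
Your route is the same as the paper's, and you keep the error terms that the paper drops. The paper's entire proof is the chain
\[
K_{h_j}\Bigl(x,\sum_{i=0}^{d}a_ix^i\Bigr)\le K_{h_j}(x,\mathbf{a})=K_{h_j}(\mathbf{a})+K_{h_j}(x\mid\mathbf{a})\le\rho h_j+sh_j ,
\]
which is exactly your decomposition. You pass to $(x,\mathbf{a})$, apply symmetry of information, bound the first term by the choice of $h_j$, and bound the second by the construction of $x$.

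The slack you identify is real, and the paper does not remove it; it suppresses it.
\begin{itemize}
\item The paper's first inequality needs the precision shift you make via \cref{lem:lipschitz} and \cref{lem:precision_linear}.
\item The equality holds only up to $O(\log h_j)$ by \cref{lem:soi}.
\item The bound $K_{h_j}(x\mid\mathbf{a})\le sh_j$ also holds only up to $O(\log h_j)$.
\item \eqref{eqn:stage_length} gives $K_{h_j}(\mathbf{a})\le(\rho+\frac1j)h_j$, not $\rho h_j$.
\end{itemize}
So neither your argument nor the paper's proves the inequality with no error term. What this method actually proves is your bound $K_{h_j}(x,P(x))\le(s+\rho)h_j+\frac{h_j}{j}+O(\log h_j)=(s+\rho)h_j+o(h_j)$. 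That is all the lemma is used for: since $h_j\to\infty$, it gives $\dim(x,P(x))\le s+\rho$ in the proof of \cref{thm:less}. You should state and prove the lemma in that form rather than chase the exact constant.

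In particular, drop the idea of altering the construction. Shortening the $y$-stretch changes the point $x$, so it would not prove the lemma for the $x$ defined in \eqref{def:x_case_1}. It would also force you to re-verify \cref{lem:lemma10}, \cref{lem:lemma11}, \cref{lem:lessleft} and \cref{lem:lessright} with the shifted boundary. You would further have to fix your constant $C$ against the unspecified constants hidden in the $O(\log h_j)$ terms. All of this buys an exact inequality that nothing downstream uses. Re-choosing $h_j$ among the precisions allowed by \eqref{eqn:stage_length} does nothing by itself; only the boundary shift would matter.
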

Following is the main theorem of this section. 
\maintheoremlow*
\begin{proof}
	We have $\rho=\dim(\mathbf{a})\leq1$ and $s\in[0,1]$. \\
	\\
	For $s=0$, we have, 
	\begin{align*}
		K_{r/d}\left(\mathbf{a}\setminus
          a_0,\sum_{i=0}^{d}a_ix^i\right)&=K_{r/d}\left(\mathbf{a}\setminus
                                           a_0\right)+K_{r/d,r/d}\left(\sum_{i=0}^{d}a_ix^i\middle|\mathbf{a}\setminus
                                           a_0\right)+O(\log r).
        \end{align*}
        We have,
        \begin{align*}
          K_{\frac{r}{d},\frac{r}{d}}\left(\sum_{i=0}^d a_i x^i
          \middle| \mathbf{a} \setminus a_0\right)
          &=
          K_{\frac{r}{d},\frac{r}{d}}\left(a_0 \middle|
          \mathbf{a}\setminus a_0\right),&\text{by construction of $x$, since $s$=0}\\
          &\geq
            K_{\frac{r}{d}-\log c,r/d}\left(a_0\middle|\mathbf{a}\setminus a_0\right)+O(\log r)\\
          &\geq
            K_{\frac{r}{d},\frac{r}{d}}\left(a_0\middle|\mathbf{a}\setminus
            a_0\right)-K\left(\frac{r}{d}-\log c\right)\\
          &\quad\quad\quad\quad-\log c+O(\log
            r),&\cref{lem:precision_linear}\\ 
		&=K_{r/d,r/d}\left(a_0|\mathbf{a}\setminus a_0\right)+O(\log r)\\
		&=K_{r/d}(\mathbf{a})+O(\log r).
	\end{align*}
	which gives the desired result. 

For $s=1$, by \cref{thm:d_degree_bound}, for almost every point $x\in\mathbb{R}$ which is random relative to $(\mathbf{a})$, we have, $\dim\left(x,\sum_{i=0}^{d}a_ix^i\right)=1+\rho$. 

For $s\in[\rho,1)$, by \cref{thm:d_degree_bound}, by selecting $x$ to
be a point which satisfies $\dim(x)=\dim^{\mathbf{a}}(x)=s$, we get
that $\dim(x,\sum_{i=0}^d a_ix^i)=s+\rho$.

For $s\in(0,\rho)$, by \cref{lem:lessleft} and \cref{lem:lessright}, we
get,
	\begin{align*}
		\dim\left(x,\sum_{i=0}^{d}a_ix^i\right)&=\liminf_{r\rightarrow\infty}\frac{K_r\left(x,\sum_{i=0}^{d}a_ix^i\right)}{r}\\
		&\geq\liminf_{r\rightarrow\infty}\frac{\left(s+\rho-\gamma(d+1)^2\right)r}{r}\\
		&=s+\rho-\gamma(d+1)^2.
	\end{align*}
	Since $\gamma$ is arbitrarily chosen, we get
        $\dim\left(x,\sum_{i=0}^da_ix^i\right)=s+\rho$. Combining this
        with \cref{lem:lessup}, we get the desired result.
\end{proof}

\section{Width of the dimension spectra}

We provide some insights into the case where the dimension spectrum of
points on a polynomial could have diameter strictly greater than 1.
This leads to answer to another question posed by Stull
\cite{Stull2022}.

\begin{lemma}\label{lem:width}
  There is a class of polynomials of the form
  $P(x)=\sum_{i=0}^da_ix^i\in\mathbb{R}[x]$, with dimension spectrum
  having diameter strictly greater than 1.
\end{lemma}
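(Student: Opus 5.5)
The plan is to take the polynomials whose coefficient vector has high dimension and exhibit two points on the graph whose dimensions differ by more than $1$. Concretely, take $\mathbf{a}=(a_0,\dots,a_d)$ with $\dim(\mathbf{a})=\rho>1$; for instance, let $a_0$ be Martin-L\"of random and $a_1,\dots,a_d$ computable with $a_d\ne0$, which gives $\rho=1$. To obtain $\rho>1$ we need two random coordinates, so take $(a_0,a_1)$ Martin-L\"of random in $\R^2$, giving $\rho=\dim(\mathbf{a})=2$. Any class with $\dim(\mathbf{a})>1$ will do, and the argument will be phrased for that class.

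\emph{Upper endpoint.} By the Corollary following \cref{thm:d_degree_bound}, for almost every $x$,
\[
\dim\Bigl(x,\sum_i a_ix^i\Bigr)=1+\min\{\rho,1\}=2 .
\]
More directly, choose $x$ Martin-L\"of random relative to $\mathbf{a}$. Then $\dim^{\mathbf{a}}(x)=1$ and $\dim(x\mid\mathbf{a})=1$ by \cref{lem:relative_conditional_complexity}, so \cref{thm:d_degree_bound} gives $\dim(x,P(x))\ge 1+\min\{\rho,1\}=2$. Trivially $\dim(x,P(x))\le 2$, so this point has dimension $2$.

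\emph{Lower endpoint.} Take $x=0$, which is computable. Then $(0,P(0))=(0,a_0)$, and $\dim(0,a_0)=\dim(a_0)$. If $a_0$ is chosen with $\dim(a_0)=\alpha<1$ (for instance, keep $a_1$ random to maintain $\rho\ge1$, and let $a_0$ be computable or of low dimension), then the spectrum contains both $\alpha$ and $2$, so its diameter is at least $2-\alpha>1$. The cleanest choice is therefore $a_0=0$, $a_1$ Martin-L\"of random, and $a_2,\dots,a_d$ rational with $a_d\neq0$. This gives $\rho=1$, the point $(0,0)$ of dimension $0$, and a point of dimension $2$, so the diameter is $2>1$. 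The class consists of all such polynomials, or more generally all polynomials with some computable $x_0$ where $\dim(P(x_0))<\min\{\rho,1\}$.

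The main thing to verify is the hypothesis of \cref{thm:d_degree_bound} at the random point, in particular that $\min\{\dim(\mathbf{a}),\dim^{\mathbf{a}}(x)\}=1$. This needs $\dim(\mathbf{a})\ge1$, which holds because one coordinate $a_1$ is random and a tuple's dimension is at least that of any coordinate projection up to $O(\log r)$ terms. The remaining facts, namely $\dim(0,q)=0$ for rational $q$ and $\dim(x,P(x))\le2$, are immediate from the definition of $K_r$ for points in $\R^2$.
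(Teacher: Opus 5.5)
Your proof is correct and is essentially the paper's argument: the computable point $x=0$ gives $(0,a_0)$ of low dimension, an $x$ Martin-L\"of random relative to $\mathbf{a}$ gives dimension $2$ by \cref{thm:d_degree_bound}, and your coefficients ($a_0=0$, $a_1$ random, the rest rational, so $\dim(\mathbf{a})=1$, which suffices since only $\min\{\dim(\mathbf{a}),1\}=1$ is used) are a minor variant of the paper's $\mathbf{a}\setminus a_0$ random with $\dim(a_0)=q<1$. The only slip is your opening remark that any class with $\dim(\mathbf{a})>1$ will do---the lower endpoint needs a low-dimensional point on the graph, such as $\dim(a_0)<1$---but your final construction handles this correctly.
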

\begin{proof}
  Denote the vector $(a_d,a_{d-1},\ldots,a_0)$ by $\mathbf{a}$.
  Consider the polynomial $P(x)=\sum_{i=0}^da_ix^i$. Let
  $S=\{(x,P(x)):x\in\mathbb{R}\}$ denote the graph of the polynomial.
  Let $\mathbf{a}\setminus \{a_0\}$ be Martin-L\"of random, and let
  $a_0\in\mathbb{R}$ be such that $\dim(a_0)=q$, where $q\in [0,1]$.
  Note that $\dim(\mathbf{a})>1$.
	
  Observe that $P(0)=a_0$, hence $\dim(0,P(0))=q$. Hence,
  $q\in sp(S)$.
	
  By \cite{Turetsky2011}, we know that $1\in sp(S)$.
	
  Next, let $x\in\mathbb{R}$ be such that it is Martin-L\"of random
  relative to $\mathbf{a}$. Then by \cref{thm:d_degree_bound}, we have
  $\dim(x,P(x)) \ge \dim(x|\mathbf{a}) +
  \min\{\dim(\mathbf{a}),\dim^{\mathbf{a}}(x)\} \ge 1+\min\{1,1\} =2$.
  Hence $\diam\ sp(S) \ge 2-q$. Thus, the diameters of the dimension
  spectrum of polynomials in this class lie in the range $[1,2]$.
\end{proof}

In the specific case of lines, we provide an answer to Stull's
question - there are lines with computable intercepts with dimension
spectrum greater than 1.

\begin{corollary}
  For the class of polynomials
  $\left\{P_i(x)\in\mathbb{R}[x]:\dim(P(0))=0\right\}_i$ with
  computable intercepts, $\diam\ sp(S)=2$.
\end{corollary}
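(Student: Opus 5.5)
The corollary says that for polynomials $P$ with computable intercept $a_0=P(0)$ (so $\dim(P(0))=0$), the dimension spectrum of the graph $S$ has diameter exactly $2$. Since the paper's own class in \cref{lem:width} takes $\mathbf{a}\setminus\{a_0\}$ Martin-L\"of random, we work in that class with $q=0$. We need to show that the spectrum contains a point of dimension $0$ and a point of dimension $2$, and that no point of $S$ has dimension above $2$ (dimensions are nonnegative anyway).

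\emph{The value $0$.} Because $a_0$ is computable, the point $(0,P(0))=(0,a_0)$ is a computable point of $\R^2$. Hence $K_r(0,a_0)=O(\log r)$, which gives $\dim(0,P(0))=0$ and so $0\in sp(S)$. This is the case $q=0$ of the observation made in \cref{lem:width}.

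\emph{The value $2$.} Choose $x$ Martin-L\"of random relative to $\mathbf{a}$. Then $\dim^{\mathbf{a}}(x)=1$, and \cref{lem:relative_conditional_complexity} gives $\dim(x\mid\mathbf{a})\ge \dim^{\mathbf{a}}(x)=1$. Since $a_1,\dots,a_d$ are random, $\dim(\mathbf{a})\ge 1$. Applying \cref{thm:d_degree_bound} yields $\dim(x,P(x))\ge 1+\min\{1,1\}=2$. (Only $\dim(\mathbf{a})\ge 1$ is used here, so the computability of $a_0$ causes no problem.)

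\emph{The upper bound.} For every $z\in\R^2$ we have $K_r(z)\le 2r+O(\log r)$, by describing a dyadic approximation to each coordinate with $r$ bits plus the integer parts. Hence every point of $S$ has dimension in $[0,2]$, so $2=\max sp(S)$ and $0=\min sp(S)$. Together with the two witnesses this gives $\diam\ sp(S)=2$.

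The only delicate point is the hypothesis behind the value $2$. If the class were read as all polynomials with computable intercept, then low-dimension coefficients (for example, all of them computable) would give $\dim(x,P(x))=\dim(x)\le 1$, and the claim would fail. So the proof must use the standing assumption from \cref{lem:width} that $\mathbf{a}\setminus\{a_0\}$ is random, or at least that $\dim(\mathbf{a})\ge 1$. I would state this assumption explicitly. The rest is routine.
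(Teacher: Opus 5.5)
Your proof is correct and follows the paper's route. It is \cref{lem:width} specialized to $q=0$: the computable point $(0,a_0)$ witnesses $0\in sp(S)$, and a point $x$ Martin-L\"of random relative to $\mathbf{a}$ witnesses $2$ via \cref{thm:d_degree_bound}. Your two additions are both correct and both needed: the bound $\dim(z)\le 2$ for all $z\in\R^2$, which turns the lemma's $\diam\ sp(S)\ge 2-q$ into equality, and the point that the randomness of $\mathbf{a}\setminus\{a_0\}$ (or at least $\dim(\mathbf{a})\ge 1$) must be assumed, since the statement fails for, e.g., computable coefficients. The paper's one-line proof leaves both implicit, so you are right to state them.
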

\begin{proof}
  Consider $a_0$ to be a computable point (dimension 0). Then, in
  \cref{lem:width}, $q=0$, and hence $\diam\ sp=2$.
\end{proof}

\section{Open problems}

A natural question to consider is whether the methods in this work extend to spectra of arbitrary continuous curves. 

Building on our work, and Stull \cite{Stull2025}, we propose the dimension spectrum conjecture for high dimension polynomials. In other words, does the dimension spectrum of every high dimension polynomial also contain an interval of length 1?  

It is also interesting to determine whether there is a trigonometric polynomial whose dimension spectrum is a singleton. 

If every dimension level set in Euclidean space is path-connected, are such paths differentiable almost everywhere, or are there dimension level sets where every continuous path in them will be nowhere differentiable?


\bibliography{main_jabref_shared}

\section{Appendix}
This section contains proofs of all the intermediate lemmas and propositions, for the sake of completion whilst maintaining brevity. 
\begin{proof}[Proof of \cref{lem:strumseqeval}]
	The following algorithm (Algorithm \ref{alg:M1}) computes an ordered
	list of values obtained by evaluating the Sturm sequence of a
	polynomial $f(x)=\sum_{i=1}^dc_ix^i$ represented by the ordered list
	of coefficients $\mathbf{c}=(c_0, \dots, c_d)$ at the input value
	$\alpha$. Note that $f_1$, the derivative is equal to
	$\sum_{i=0}^{d-1} i c_{i} x^{i-1}$ which can be computed from the
	given list of coefficients $\textbf{c}$. Further, $f_i$, $i \ge 2$,
	can be obtained by the Euclidean algorithm.
	\begin{algorithm}[!h]
		\begin{algorithmic}[1]
			\caption{Algorithm to compute the Sturm Sequence}\label{alg:M1}
			\Procedure{\textsc{SturmSequence}}{$\mathbf{c},\alpha$} 
			\State $f(x)\gets\sum_{i=1}^dc_ix^i$  
			\State $\phi\gets\{\}$ \Comment{Sturm sequence of $f$} 
			\State $f_0\equiv f$ 
			\State $\phi\gets\phi\cup \{f_0\}$ 
			\State $f_1\equiv f'$  \Comment{Derivative of $f$}
			\State $\phi\gets\phi\cup \{f_1\}$ 
			\State $i\gets3$ 
			\While{$i\leq d$} 
			\State $f_{i-2}\equiv f_i\mod f_{i-1}$ \Comment{Euclidean
				division algorithm}  
			\State $\phi\gets\phi\cup \{f_i\}$ 
			\State $i\gets i+1$ 
			\EndWhile 
			\Statex
			\State $\theta\gets()$ \Comment{Evaluation list} 
			\State $i\gets0$
			\While{$i\leq d$} 
			\State $\theta\gets\theta \catenate (f_i(\alpha))$
			\Comment{append to the list $\theta$}  
			\State $i\gets i+1$
			\EndWhile 
			\State \textbf{return} $\theta$ 
			\EndProcedure
		\end{algorithmic}
	\end{algorithm}
\end{proof}
\begin{proof}[Proof of \cref{lem:signchangecalc}]
	We observe that computing the exact sign of a polynomial at a point
	is not possible. This is because the evaluation point can be close
	enough to one of the roots given the precision limitation, in which
	case the approximation may have opposite sign of the actual value.
	The following algorithm takes care of this issue and ensures that no
	root is missed while not too many additional points are added.
	
	Let $\textsc{MaxSignChange}$ be a Turing machine, described by
	\cref{alg:MaxSignChange}, that upper bounds the possible number of
	sign changes in an ordered list of input values. In the first part
	of \cref{alg:MaxSignChange}, we compute the number of sign changes
	in the input sequence without considering the magnitude of the
	corresponding values. This simple counting might result in some
	roots getting missed because of the precision limitations.
	
	If the absolute value of the input is less than the precision range,
	i.e. $|\theta_i|<\gamma$ for some $i$, the computed sign may not be
	the actual sign. To take care of such cases, in the second part of
	\cref{alg:MaxSignChange}, for every input value lying within the
	precision range, instead of considering the computed sign, we
	consider both possible signs, and compute the maximum number of
	possible sign changes that could have had taken place. The number of
	additional sign changes that could take place depends on the signs
	of the preceding and following values in the input sequence. If the
	preceding and following terms have opposite signs, then for any
	possible sign of the current term, the number of sign changes will
	remain the same. For the case where the preceding and following
	terms have the same signs, a different sign of the current term
	could result in the 2 additional sign changes which would previously
	not have been counted (which happens when all the three consecutive
	terms have same sign). Hence, the value returned by
	$\textsc{MaxSignChange}$ is an overestimate of the actual number of
	sign changes occurring in the input sequence.
	
	Note that to get an upper bound on the number of roots in an
	interval $(a,b]$, by \cref{thm:sturm}, we need to upper bound the
	number of sign changes at $a$ and simultaneously lower bound the
	number of sign changes at $b$. By replacing $s\gets s+1$ by
	$s\gets s-1$ in lines \ref{algline:case0} and \ref{algline:casen},
	and replacing $s\gets s+2$ by $s\gets s-2$ in line
	\ref{algline:case1} of \cref{alg:MaxSignChange}, we get the dual
	algorithm $\textsc{MinSignChange}$, which lower bounds the number of
	sign changes in the input sequence.
	\begin{algorithm}[!h]
		\caption{\textsc{MaxSignChange}}\label{alg:MaxSignChange}
		\begin{algorithmic}[1]
			\Procedure{\textsc{MaxSignChange}}
			{$\theta=(\theta_0,\theta_1,\ldots,\theta_n)$,~$\gamma$}
			\State $s\gets0$ \Comment{Number of sign flips} 
			\State $i\gets0$
			\While{$i\leq n-1$}
			\If{$sgn(\theta_i\theta_{i+1})<0$} 
			\State $s\gets s+1$ 
			\EndIf
			\State $i\gets i+1$
			\EndWhile
			\Statex
			\State $i\gets0$
			\If{$|\theta_i|<2^{-r}$}
			\State $s\gets s+1$\label{algline:case0}
			\EndIf
			\State $i\gets i+1$
			\Statex
			\While{$i\leq n-1$}
			\If{$|\theta_i|<2^{-r}$}
			\If{$sgn(\theta_{i-1}\theta_{i+1})>0$}
			\State $s\gets s+2$\label{algline:case1}
			\EndIf
			\EndIf
			\State $i\gets i+1$
			\EndWhile
			\Statex
			\If{$|\theta_i|<\gamma$}
			\State $s\gets s+1$\label{algline:casen}
			\EndIf
			\State \textbf{return} $s$
			\EndProcedure
		\end{algorithmic}
	\end{algorithm}
\end{proof}
\begin{proof}[Proof of \cref{lem:rootenum}]
	\cref{alg:rootenum} helps enumerate the roots of the polynomial
	under consideration in a sorted manner. Since the number of roots is
	bounded, the description of the required root can be encoded in a
	corresponding program.
	\begin{algorithm}[H]
		\caption{Root enumeration}\label{alg:rootenum}
		\begin{algorithmic}[1]
			\Procedure{RootEnum}{$\mathbf{c},r$}
			\State Compute $\beta$ for $f(x)=\sum_{i=1}^dc_ix^i$
			\Comment{Cauchy bound}
			\State $\Delta\gets\{-\beta,\beta\}$ \Comment{Grid elements}
			\State $\theta\gets\{\}$ \Comment{Sorted list of roots}
			\State $r\gets r+1+\log\beta$\Comment{To ensure $r$ bits of
				precision after decimal point} 
			\While{$r\geq0$}
			\While{$i\not=j\in\Delta$}
			\State $\Delta\gets\Delta\cup\left\{\frac{i+j}{2}\right\}$
			\EndWhile
			\State $r\gets r-1$
			\EndWhile
			\Statex
			\State $\textit{sort }(\Delta)$
			\State $z\gets-\beta$
			\While{$z\in\Delta$}
			\State $L \gets
			\textsc{MaxSignChange}\left(\textsc{SturmSequence}\left(\mathbf{c},z\right),c_p
			1      2^{-dr} \right)$
			\State $R \gets
			\textsc{MinSignChange}\left(\textsc{SturmSequence}\left(\mathbf{c},z+\frac{\beta}{2^{r-1}}\right),
			c_p 2^{-dr}\right)$
			\If{$L-R\geq1$}
			\State $\theta\gets\theta\cup \{z+\frac{\beta}{2^r}\}$
			\EndIf
			\State $z\gets z+\frac{\beta}{2^{r-1}}$
			\EndWhile
			\State \textbf{return} $\theta$
			\EndProcedure
		\end{algorithmic}
	\end{algorithm}
	In the second part of \cref{alg:rootenum}, after $r$ iterations, the
	following cases may arise. By Sturm's theorem, if the difference in
	the number of sign changes at the ends of an interval is $0$, there
	is no root in that interval, and hence the value is not included in
	the output list. For the other case, since we have a precision of
	$r$ bits, the roots are close enough already, and hence we can pick
	any value in the interval as a representative of the roots
	regardless of the number of roots present. This might result in a
	slightly smaller output list, but for every root, we have an
	appropriate approximation in that interval.
	
	If we know the exact coefficients, Sturm's theorem gives at most
	$d$ roots. To upper bound the length of the output list $\theta$,
	note that if in lines 13-15, $L-R$ differs from the actual number of
	sign changes for the interval, then at least one of the polynomials
	in the Sturm sequence for at least one of the endpoints is less than
	or equal to $c_p 2^{-dr}$. Here, $c_p$ is a constant depending only
	on $P$. By \cref{lem:poly_bound}, this happens only when one of the
	endpoints is within $2^{-r}$ of some root, for all sufficiently
	large $r$. There are at most $d$ polynomials in the Sturm sequence
	at each endpoint, hence $2d$ polynomials, each with at most $d$
	roots. Each root $\alpha$ can have at most 2 distinct neighbors $x$
	other than itself, for which $|p(x)| < c_p 2^{-dr}$. Thus the length
	of the output list $\theta$ in the above algorithm is at most $6d^2$.
\end{proof}
\begin{proof}[Proof of \cref{lem:errbound}]
	Let $P(x)=\sum_{i=0}^d a_i x^i$, and $Q(x)=\sum_{i=0}^d b_i x^i$ be
	the given $d$-degree polynomials.
	
	By the symmetry of information (\cref{lem:soi}), we get,
	\begin{align*}
		K_r(\mathbf{b})\geq
		K_{r,r}(\mathbf{b}|\mathbf{a})~+~
		K_r(\mathbf{a})-K_{r,r}(\mathbf{a}|\mathbf{b})~-~
		O_{\mathbf{a}}(\log r).
	\end{align*}
	We bound the terms on the right to estimate the information in the
	polynomial $C(x)=P(x)-Q(x)$.
	
	First, we bound the term
	$K_r(\mathbf{a})-K_{r,r}(\mathbf{a}|\mathbf{b})$.
	
	Since $\|\mathbf{b}-\mathbf{a}\| < 2^{-r}$, we have
	$\left(\forall r\geq t\right)\left(B(\mathbf{b},2^{-r})\subseteq
	B(\mathbf{a},2^{-(t-1)})\right)$. Thus,
	$K_{r,r}(\mathbf{a}|\mathbf{b})\leq K_{r,t-1}(\mathbf{a}|\mathbf{a})$.
	Thus,
	\begin{align*}
		K_r(\mathbf{a})-K_{r,r}
		(\mathbf{a}|\mathbf{b})\geq K_r(\mathbf{a})-K_{r,t-1}(\mathbf{a}|\mathbf{a}).
	\end{align*}
	By \cref{lem:precision_var}, we get, 
	\begin{align*}
		K_r(\mathbf{a})-K_{r,r}(\mathbf{a}|\mathbf{b})\geq K_{t-1}(\mathbf{a})-O(\log r).
	\end{align*}
	Now, by \cref{lem:precision_linear}, we get, 
	\begin{align}
		\label{ineq:errorbound_one}
		K_r(\mathbf{a})-K_{r,r}(\mathbf{a}|\mathbf{b})\geq K_t(\mathbf{a})-O(\log r).
	\end{align}

	To lower bound $K_{r,r}(\mathbf{b}|\mathbf{a})$ such that at $x\in\R$,
	we have $\sum_{i=0}^d a_i x^i = \sum_{i=0}^d b_i x^i$, denote
	$\mathbf{b}=(b_0,\dots,b_{d})$ and consider a program $\pi$ such
	that $U(\pi,\tilde{\mathbf{a}}) = \mathbf{b}$, where
	$\tilde{\mathbf{a}} \in B(\mathbf{a},2^{-r})\cap\Q^{d+1}$. We then
	run \cref{alg:rootenum} on inputs $\mathbf{b}$ and $r \in \N$. We
	know that the output list has some $\tilde{x}\in\Q$ which
	approximates $x$ to precision $r$.
	
	Thus, by the bound in \cref{lem:rootenum}, we know that
	$|x - \tilde{x}| \le 2^{-r}$. Since the list output by the algorithm
	has at most $6d^2$ entries, in order to specify a particular root in
	the output list of \cref{alg:rootenum}, we need at most $O(\log d)$
	bits. Hence, we have
	\begin{align*}
		K_{r,r}(x|\mathbf{a}) \le 
		K_{r,r}(\mathbf{b} | \mathbf{a}) + 
		O_{x,\mathbf{a}}(\log r) + 
		O_{x,\mathbf{a}}(\log d).
	\end{align*}
	Since $d$ is a constant independent of $r$, we have
	\begin{align}
		\label{ineq:errorbound_two}
		K_{r,r}(\mathbf{b}|\mathbf{a}) \ge K_{r,r}(x|\mathbf{a}) 
		- O_{x,\mathbf{a}}(\log r)
	\end{align}
	whence it follows that
	\begin{align*}
		K_r(\mathbf{b}) &\geq K_t(\mathbf{a}) + K_{r,r}(x|\mathbf{a}) -
		O_{x,\mathbf{a}}(\log r)\\
		&\geq K_t(\mathbf{a}) + K_{r-t,r}(x|\mathbf{a}) -
		O_{x,\mathbf{a}}(\log r).
		\quad\qedhere
	\end{align*}
\end{proof}
\begin{proof}[Proof of \cref{lem:errorestimate}]
	First, we define an oracle Turing machine $M$ with oracle $A$ such
	that, given input $\sigma=\xi\alpha\rho\epsilon\tau$ where
	$U(\xi)=(\snt x ,\snt y
	)\in\Q^2$,$U(\alpha)=(\hat{a_0},\dots,\hat{a_d})\in\Q^d$,
	$U(\rho)=r$,$U(\epsilon)=\varepsilon$ and $U(\tau)=\eta$, does the
	following. It executes all programs having length at most
	$(\varepsilon+\eta)r$ in parallel, and if one of the programs
	outputs $\snt {\mathbf{a}} \in B\left(\mathbf{a},2^{-1}\right)$ and such that
	\begin{align*}
		|\snt y-\left(\snt P\left(\snt x\right)\right)|<
		\left(d+1\right)2^{-r} \left(\tilde{t}_d +
		d\tilde{t}_d\max\left\{|\tilde{a_i}|\mid 0\le i \le d\right\} \right).
	\end{align*}
	outputs $(\snt{\mathbf{a}},\snt x)$, where 
	$\tilde{t}_d =
	\max\left\{|\tilde{x}|+2^{-r},\left(|\tilde{x}|+2^{-r}\right)^d\right\}$.
	
	We first argue that $M$ halts. Note that $M$ has, as input, a rough
	approximation $(\hat{a_0},\dots,\hat{a_d})$ within $B\left((a_0,\ldots,a_d),2^{-1}\right)$.
	By assumption (i) and since $r$ is sufficiently high, there is some
	rational vector $(\tilde{a_0},\dots,\tilde{a_d})$ inside
	$B\left((a_0,\ldots,a_d),2^{-r}\right)$ with complexity at most $(\eta+\varepsilon)r$. Now,
	$(\snt x, \snt y) \in B((x,y),2^{-r})$. Hence,
	\begin{align*}
		\left|\tilde{y}-\left(\sum_{i=0}^d \tilde{a_i}x^i\right)\right|
		&\le
		\left|\sum_{i=0}^d a_ix^i- \sum_{i=0}^d\tilde{a_i}\tilde{x}^i\right|+ 
		\left|\tilde{y}-\left(\sum_{i=0}^d a_ix^i\right)\right|\\
		&\le
		\left|\sum_{i=0}^d a_ix^i- \sum_{i=0}^d\tilde{a_i}\tilde{x}^i\right|+ 
		2^{-r}. 
	\end{align*}
	We have, for $0\le i\le d$,
	\begin{align*}
		|a_ix^i-\tilde{a_i}\tilde{x}^i|
		&\le|a_ix^i-\tilde{a_i}x^i|+|\tilde{a_i}x^i-\tilde{a_i}\tilde{x}^i|\\
		&\le|(a_i-\tilde{a_i}) x^i|+|\tilde{a_i}(x^i-\tilde{x}^i)|\\
		&\le2^{-r}|x^i|+|\tilde{a_i}(x^i-\tilde{x}^i)|\\
		&\le2^{-r}|x^i|+|\tilde{a_i}|2^{-r}\times i\times 
		\max\left\{|x|,|x|^i,|\tilde{x}|,|\tilde{x}^i|\right\}.
	\end{align*}
	where the last inequality follows by \cref{lem:degree_d_approx}. Since
	$|\tilde{x}|<|x|+2^{-r}$, we have
	$|\tilde{x}^i|<|x|^i\times i\times2^{-r}$.
	
	Since the machine has access only to $|\tilde{a}|$ and $|\tilde{x}|$,
	we now form an upper bound in terms of these. If $|x|\le 1$, then
	$|x|^i \le |x| \le |\tilde{x}|+2^{-r}$. Otherwise, when $|x|>1$, we
	have $|x|\le |\tilde{x}|+2^{-r}$, so $|x|^i \le (|x|+2^{-r})^i$.
	
	Hence, letting $\tilde{t}_i =
	\max\{|\tilde{x}|+2^{-r},(|\tilde{x}|+2^{-r})^i\}$, we have
	\begin{align*}
		|a_ix^i-\tilde{a_i}\tilde{x}^i|
		&\le
		2^{-r}\tilde{t}_i +
		i|\tilde{a_i}|2^{-r}\tilde{t}_i
	\end{align*}
	implying that
	\begin{align*}
		\sum_{i=0}^d |a_ix^i-\tilde{a_i}\tilde{x}^i|
		\le
		(d+1)2^{-r} \left(\tilde{t}_d +
		d\tilde{t}_d\max\left\{|\tilde{a_i}|\mid 0\le i \le d\right\} \right).
	\end{align*}
	This shows that $M$ halts.

	Second, we show that we can apply condition (ii) in the statement of
	the lemma. To show this, we construct a tuple $\left(b_0,\dots,b_d\right)\in B\left(\left(a_0,\dots,a_d\right),1\right)$ such that
	$\sum_{i=0}^d\left(a_i-b_i\right)x^i=0$. Consider the polynomial defined by setting
	the coefficients $b_i=\tilde{a_i}$ for $1 \le i \le d$ and
	$$b_0 =
	\left(\sum_{i=0}^da_ix^i\right)-\left(\sum_{i=1}^d
	\tilde{a_i}x^i\right).$$ We can easily verify that
	$\sum_{i=0}^d b_i x^i = \sum_{i=0}^d a_ix^i$. In order to apply
	condition (ii), it remains to show that
	$(b_0,\dots,b_d)\in B\left((a_0,\dots,a_d),1\right)$. We have
	\begin{align}
		\notag
		\|(a_0,\dots,a_d)-(b_0,\dots,b_d)\|^2
		&=\left(\sum_{i=0}^d \left(a_i-b_i\right)^2\right)\\
		\notag
		&=\left(\sum_{i=1}^d \left(a_i-b_i\right)^2\right)+
		(b_0-a_0)^2\\
		\notag
		&=\left(\sum_{i=1}^d \left(a_i-\tilde{a_i}\right)^2\right)+
		(b_0-a_0)^2\\
		\label{ineq:L2_difference}
		&\le2^{-2r}d+(b_0-a_0)^2.
	\end{align}
	We now have
	\begin{align*}
		b_0-a_0 
		&=\left(\sum_{i=0}^d a_i x^i\right)-
		\left(\sum_{i=0}^d \tilde{a_i} x^i\right)-a_0\\
		&=\left(\sum_{i=1}^d(a_i-\tilde{a_i} x^i\right)\\
		&\le2^{-r}d \max\{|x|,|x|^d\}.
	\end{align*}
	Thus,
	\begin{align*}
		(b_0-a_0)^2 \le 2^{-2r}d^2\max\{|x|,|x|^{2d}\}.
	\end{align*}
	Using this bound in \Cref{ineq:L2_difference}, we obtain
	\begin{align}
		\label{ineq:L_final}
		\|(b_0,\dots,b_d)-(a_0,\dots,a_d)\|^2
		&<d^2 2^{-2r} \left(1+\max \{|x|,|x|^{2d}\}\right).
	\end{align}
	Since $r>\log ( d(1+\max\{|x|,|x|^{2d}\}) )$, we have 
	\begin{align*}
		\frac{d^2(1+\max\{|x|,|x|^{2d}\})}{2^{2r}} < 1,
	\end{align*}
	as required. This shows that we can apply condition (ii). The
	remainder of the estimation is similar to that of Lemma 3.1 in N. Lutz
	and Stull \cite{Lutz2020e}, which we now summarize.
	
	Now we establish the lower bound for $K_r(x,P(x))$ using conditions
	(i) and (ii). Set $\gamma=\log(2\max\{a_i\mid 0\le i\le d\} +
	d(1+\max\{|x|,|x|^{2d}\})$. By inequality \eqref{ineq:L_final}, we know that
	\begin{align}
		\notag
		K_{r-\gamma-1}(b_0,\dots,b_d,x) 
		&\le \ell(\pi)+O(1)\\
		\notag
		&\le K_r(x,P(x))+K_2(a_0,\dots,a_d)+K(r)+K(\varepsilon)+K(\eta)+O(1)\\
		\label{ineq:lb_x_Px_final}
		&=
		K_r(x,P(x))+K(\varepsilon)+K(\eta)+O_{a_0,\dots,a_d}(\log r).
	\end{align}
	where the last inequality follows by condition (i). 
	
	If $r<t$, then by \cref{lem:precision_linear} we get
	\begin{align*}
		K_r(b_0,\dots,b_d,x)\ge K_r(a_0,\dots,a_d,x)-O_{a,b,x}(\log r),
	\end{align*}
	which, by \cref{ineq:lb_x_Px_final} establishes
	\begin{align*}
		K_r(x,P(x))\ge K_r(a_0,\dots,a_d,x)-K(\varepsilon)-K(\eta)-O_{a_0,\dots,a_d}(\log
		r).
	\end{align*}
	
	Otherwise, if $t<r$, by \cref{lem:precision_linear}, we have 
	\begin{align*}
		K_r(\mathbf{b},x)\ge K_r(\mathbf{a},x) - (d+1)(r-t)-O_{a_0,\dots,a_d,x}(\log r).
	\end{align*}
	By an argument similar as in Lemma 3.1 of (i), it is possible to show that  
	\begin{align*}
		r-t\le\frac{2\varepsilon r}{\delta}+O_{a_0,\dots,a_d,x}(\log r),
	\end{align*}
	establishing \cref{ineq:lb_x_Px}. 
\end{proof}


\begin{proof}[Proof of \cref{lem:lemma10}]
	By \cref{lem:soi}, we have, 
	\begin{align*}
		K_{\frac{r-sh_j}{d},r}\left(\mathbf{a}\middle|x,\sum_{i=0}^{d}a_ix^i\right)=&K_{\frac{r-sh_j}{d},r}\left(\mathbf{a}\setminus a_0\middle|x,\sum_{i=0}^{d}a_ix^i\right)+\\
		&K_{\frac{r-sh_j}{d},r}\left(a_0\middle|x,\mathbf{a}\setminus a_0,\sum_{i=0}^{d}a_ix^i\right)+O(\log r).
	\end{align*}
	By construction of $x$, the last $r-sh_j$ bits of $x$ contain the first $\frac{h_j}{d}(1-s)$ bits of each $a\in\mathbf{a}\setminus a_0$ in an interleaved fashion. Hence, 
	\begin{align*}
		K_{\frac{r-h_j}{d},r}\left(\mathbf{a}\setminus a_0\middle|x,\sum_{i=0}^{d}a_ix^i\right)&\leq K_{\frac{r-sh_j}{d},r}(\mathbf{a}|x)\\
		&\leq O(\log h_j).
	\end{align*}
	Now, given sufficient number of approximations of $x$, $\mathbf{a}\setminus a_0$, and $\sum_{i=0}^{d}a_ix^i$, we can approximate $a_0$. 
	
	Hence, by \cref{lem:lipschitz}, 
	\begin{align*}
		K_{\frac{r-sh_j}{d}-\log c,r}\left(a_0\middle|x,\mathbf{a}\setminus a_0,\sum_{i=0}^{d}a_ix^i\right)&\leq O(\log h_j).
	\end{align*}
	By \cref{lem:precision_linear}, we get, 
	\begin{align*}
		K_{\frac{r-sh_j}{d},r}\left(a_0\middle|x,\mathbf{a}\setminus a_0,\sum_{i=0}^{d}a_ix^i\right)&\leq K_{\frac{r-sh_j}{d}-\log c,r}\left(a_0\middle|x,\mathbf{a}\setminus a_0,\sum_{i=0}^{d}a_ix^i\right)+K\left(\frac{r-sh_j}{d}-\log c\right)+O(1)\\
		&\leq K_{\frac{r-sh_j}{d}-\log c,r}\left(a_0\middle|x,\mathbf{a}\setminus a_0,\sum_{i=0}^{d}a_ix^i\right)+O(\log h_j)\\
		&\leq O(\log h_j). 
	\end{align*}
	Therefore, by the initial result, we get 
	\begin{align*}
		K_{\frac{r-sh_j}{d},r}\left(\mathbf{a}\middle|x,\sum_{i=0}^{d}a_ix^i\right)&\leq O(\log h_j). 
	\end{align*} 
\end{proof}
\begin{proof}[Proof of \cref{lem:lemma11}]
	For the first statement, let $t<sh_j$. By construction of $x$, we have, 
	\begin{align*}
		K_t^\mathbf{a}(x)&\geq K_t^\mathbf{a}(y)-h_{j-1}-O(\log t)\\
		&\geq t-O(\log t)-\log h_j-O(\log t)\\
		&\geq t-O(\log h_j).
	\end{align*}
	For the second statement, let $h_j\leq t\leq sh_{j+1}$. We have, 
	\begin{align*}
		K_t^\mathbf{a}(x)&=K_{h_j}^\mathbf{a}(x)+K_{t,h_j}^\mathbf{a}(x)-O(\log t),&\textit{ by \cref{lem:soi}}\\
		&\geq sh_j+K_{t,h_j}^\mathbf{a}(x)-O(\log t),&\textit{by \cref{lem:lemma11}.1}\\
		&\geq sh_j+K_{t,h_j}^\mathbf{a}(y)-O(\log t)\\
		&\geq t-h_j+sh_j-O(\log t)\\
		&\geq sh_j+t-O(\log t). 
	\end{align*}
	which completes the proof. 
\end{proof}
\begin{proof}[Proof of \cref{lem:lessleft}]
  Let $\eta\in\mathbb{Q}$ such that $\rho-\frac{\gamma}{4}(d+1)^2<\eta<\rho-\gamma^2$, and $\epsilon\in\mathbb{Q}$ such that $\epsilon<\frac{\gamma}{16}(\rho-\eta)$. Let $D=D(r,\mathbf{a},\eta)$ be the oracle for \cref{lem:oraclebound}. \\
  \\
  For a close by polynomial $\sum_{i=0}^db_ix^i\in\mathbb{R}[x]$ such
  that $t:=\|\mathbf{a}-\mathbf{b}\|\geq r-sh_j$, and
  $\sum_{i=0}^{d}a_ix^i=\sum_{i=0}^{d}b_ix^i$, by \cref{lem:errbound},
  \cref{lem:oraclebound}, and \cref{lem:lemma11}, we have
	\begin{align*}
          K_r(\mathbf{b})
          &\geq K_t(\mathbf{a}) + K_{r-t,t}(x|\mathbf{a})-O(\log r)\\
          &\geq K_t^D(\mathbf{a})+K_{r-t,t}^D(x|\mathbf{a})-O(\log r)\\
          &\geq K_t^D(\mathbf{a})+K_{r-t,t}(x|\mathbf{a})-O(\log r)\\
          &\geq K_t^D(\mathbf{a})+r-t-O(\log r).
	\end{align*}
	To ensure the conditions for \cref{lem:oraclebound}, we have 2 cases, $K_t^D(\mathbf{a})=\eta r$, and $K_t^D(\mathbf{a})=K_t(\mathbf{a})$. 
	
	For the case $K_t^D(\mathbf{a})=\eta r$, we have, 
	\begin{align*}
		K_r(\mathbf{b})&\geq\eta r+r-t-O(\log r)\\
		&\geq\left(\eta-\epsilon\right)r+r-t\\
		&\geq(\eta-\epsilon)r+(1-\eta)(r-t).&\textit{since }\textit{ $\eta<1$}
	\end{align*}
	
	For the case $K_t^D(\mathbf{a})=K_t(\mathbf{a})$, we have, 
	\begin{align*}
		K_r(\mathbf{b})&\geq K_t(\mathbf{a})+r-t-O(\log r)\\
		&\geq\rho t-o(t)+r-t-O(\log r),&\textit{ definition of $\dim$}\\
		&\geq\eta r+(1-\eta)r-t(1-\rho)-\epsilon r\\
		&\geq\eta r-\epsilon r+(1-\eta)(r-t),&\textit{since }\textit{ $\eta<\rho$}\\
		&=(\eta-\epsilon)r+(1-\eta)(r-t).
	\end{align*}
	Hence, for oracle $D$, with $\delta=1-\eta$, by \cref{lem:errorestimate}, we have, 
	\begin{align*}
		K_r\left(x,\sum_{i=0}^{d}a_ix^i\right)&\geq K_r(\mathbf{a},x)-\frac{4\epsilon}{\delta}(d+1)r-K(\epsilon)-K(\eta)-O_\mathbf{a}(\log r)\\
		&\geq K_r^D(\mathbf{a},x)-\frac{4\epsilon}{\delta}(d+1)r-K(\epsilon)-K(\eta)-O_\mathbf{a}(\log r)\\
		&\geq K_r^D(\mathbf{a},x)-K_{\frac{r-sh_j}{d},r}\left(\mathbf{a}\middle|x,\sum_{i=0}^{d}a_ix^i\right)-\frac{4\epsilon}{\delta}(d+1)r-K(\epsilon,\eta)-O(\log r)\\
		&\geq K_r^D(\mathbf{a},x)-K_{\frac{r-sh_j}{d},r}\left(\mathbf{a}\middle|x,\sum_{i=0}^{d}a_ix^i\right)-\frac{4\epsilon}{1-\eta}(d+1)r-K(\epsilon,\eta)-O(\log r)\\
		&\geq K_r^D(\mathbf{a},x)-K_{\frac{r-sh_j}{d},r}\left(\mathbf{a}\middle|x,\sum_{i=0}^{d}a_ix^i\right)-\frac{\gamma}{4}(d+1)r-O(1)-O(\log r)\\
		&\geq K_r^D(\mathbf{a},x)-K_{\frac{r-sh_j}{d},r}\left(\mathbf{a}\middle|x,\sum_{i=0}^{d}a_ix^i\right)-\frac{\gamma}{4}(d+1)^2r-\frac{\gamma}{8}(d+1)^2r\\
		&\geq K_r^D(\mathbf{a},x)-K_{\frac{r-sh_j}{d},r}\left(\mathbf{a}\middle|x,\sum_{i=0}^{d}a_ix^i\right)-\frac{3\gamma}{8}(d+1)^2r.
	\end{align*}
	Now, by symmetry of information, we have, 
	\begin{align*}
		K_r^D(\mathbf{a},x)&=K_r^D(\mathbf{a})+K_{r,r}^D(x|\mathbf{a})-O(\log r)\\
		&=K_r^D(\mathbf{a})+K_{r,r}(x|\mathbf{a})-O(\log r),&\textit{ by \cref{lem:oraclebound}.1}\\
		&\geq\eta r+K_{r,r}(x|\mathbf{a})-O(\log r),&\textit{ by \cref{lem:oraclebound}.2}\\
		&\geq\eta r+sh_j-O(\log r),&\textit{ by \cref{lem:lemma11}}\\
		&\geq\eta r+sh_j-\frac{\gamma}{4}(d+1)^2r.
	\end{align*}
	By \cref{lem:lemma10}, we have,
	\begin{align*}
		K_{\frac{r-sh_j}{d},r}\left(\mathbf{a}\middle|x,\sum_{i=0}^{d}a_ix^i\right)&\leq\ O(\log r)\\
		&\leq\frac{\gamma}{8}(d+1)^2r.
	\end{align*}
	Thus, by combining the above results, we get,
	\begin{align*} 
		K_r\left(x,\sum_{i=0}^{d}a_ix^i\right)&\geq K_r^D(\mathbf{a},x)-K_{\frac{r-sh_j}{d},r}\left(\mathbf{a}\middle|x,\sum_{i=0}^{d}a_ix^i\right)-\frac{3\gamma}{8}(d+1)^2r\\
		&\geq\eta r+sh_j-\frac{\gamma}{4}(d+1)^2r-\frac{\gamma}{8}(d+1)^2r-\frac{3\gamma}{8}(d+1)^2r\\
		&\geq\rho r-\frac{\gamma}{4}(d+1)^2r+sh_j-\frac{3\gamma}{4}(d+1)^2r\\
		&\geq\left(s+\rho-\gamma(d+1)^2\right)r.
	\end{align*}
\end{proof}
\begin{proof}[Proof of \cref{lem:lessright}]
	Let $\hat{s}\in\mathbb{Q}\cap(0,s)$ such that $\frac{\gamma}{8}\leq s-\hat{s}\leq\frac{\gamma}{4}(d+1)^2$, and $\hat{\rho}\in\mathbb{Q}\cap(0,\rho)$ such that $\frac{\gamma}{8}\leq \rho-\hat{\rho}\leq\frac{\gamma}{4}(d+1)^2$.\\
	\\
	Define $\alpha:=\frac{s(r-h_j)+\rho h_j}{r}$, and $\eta\in\mathbb{Q}\cap(0,\alpha)$ as $\eta:=\frac{\hat{s}(r-h_j)+\hat{\rho}h_j}{r}$.\\
	\\
	Observe that 
	\begin{align*}
		\alpha-\eta&=\frac{s(r-h_j)+\rho h_j-\hat{s}(r-h_j)-\hat{\rho}h_j}{r}\\
		&=\frac{(s-\hat{s})(r-h_j)+(\rho-\hat{\rho})h_j}{r}\\
		&\geq\frac{1}{r}\left(\frac{\gamma}{8}(r-h_j)+\frac{\gamma}{8}h_j\right)\quad=\frac{\gamma}{8}.
	\end{align*}
	Similarly,
	\begin{align*}
		\alpha-\eta
		&=\frac{(s-\hat{s})(r-h_j)+(\rho-\hat{\rho})h_j}{r}\\
		&\leq\frac{1}{r}\left(\frac{\gamma(d+1)^2}{4}\left(r-h_j\right)+\frac{\gamma(d+1)^2}{4}h_j\right)
		\quad=\frac{\gamma}{4}(d+1)^2.
	\end{align*}
	Lastly, with $\epsilon=\frac{\gamma^2}{128}$, observe that $\frac{4\epsilon}{\alpha-\eta}\leq\frac{\gamma}{4}$. 
	
	Let $D$ be the oracle for \cref{lem:oraclebound}. For a close by polynomial $\sum_{i=0}^db_ix^i\in\mathbb{R}[x]$ such that $t:=\|\mathbf{b}-\mathbf{a}\|\geq h_j$ and $\sum_{i=0}^{d}a_ix^i=\sum_{i=0}^{d}b_ix^i$, by \cref{lem:errbound}, \cref{lem:oraclebound}, and \cref{lem:lemma11} respectively, we have, 
	\begin{align*}
          K_r(\mathbf{b})
          &\geq K_t(\mathbf{a})+K_{r-t,t}(x|\mathbf{a})-O(\log r)\\
          &\geq K_t^D(\mathbf{a})+K_{r-t,t}^D(x|\mathbf{a})-O(\log r)\\
		&\geq K_t^D(\mathbf{a})+K_{r-t,t}(x|\mathbf{a})-O(\log r)\\
		&\geq K_t^D(\mathbf{a})+s(r-t)-O(\log r).
	\end{align*}
	To ensure the conditions of \cref{lem:oraclebound}, we have 2 cases, $K_t^D(\mathbf{a})=\eta r$, and $K_t^D(\mathbf{a})=K_t(\mathbf{a})$. 
	
	For $K_t^D(\mathbf{a})=\eta r$, we have, 
	\begin{align*}
		K_r(\mathbf{b})&\geq\eta r+s(r-t)-O(\log r)\\
		&\geq\eta r+s(r-t)-\epsilon r\\
		&=(\eta-\epsilon)r+s(r-t)\\
		&\geq(\eta-\epsilon)r+(\alpha-\eta)(r-t).
	\end{align*}
	
	For $K_t^D(\mathbf{a})=K_t(\mathbf{a})$, we have, 
	\begin{align*}
		K_r(\mathbf{b})&\geq K_t(\mathbf{a})+s(r-t)-O(\log r)\\
		&\geq\rho t-o(t)+s(r-t)-O(\log r),&\textit{ by definition of $\dim$}\\
		&\geq\rho h_j+\rho(t-h_j)+s(r-t)-o(r)\\
		&=\rho h_j+\rho(t-h_j)+s(r-h_j)-s(t-h_j)-o(r)\\
		&=\alpha r+(\rho-s)(t-h_j)-o(r),&\textit{ by definition of $\alpha$}\\
		&=\eta r+(\alpha-\eta)r+(\rho-s)(t-h_j)-o(r)\\
		&\geq\eta r+(\alpha-\eta)(r-t)+(\rho-s)(t-h_j)-o(r)\\
		&\geq\eta r+(\alpha-\eta)(r-t)-o(r),&\textit{since }\textit{ $\rho\geq s$}\\
		&\geq\eta r-\epsilon r+(\alpha-\eta)(r-t)\\
		&=(\eta-\epsilon)r+(\alpha-\eta)(r-t).
	\end{align*}
	Hence, for oracle $D$, with $\delta=\alpha-\eta$, by \cref{lem:errorestimate}, we have, 
	\begin{align*}
		K_r\left(x,\sum_{i=0}^{d}a_ix^i\right)&\geq K_r(\mathbf{a},x)-\frac{4\epsilon}{\delta}(d+1)r-K(\epsilon)-K(\eta)-O_\mathbf{a}(\log r)\\
		&\geq K_r^D(\mathbf{a},x)-\frac{4\epsilon}{\delta}(d+1)r-K(\epsilon)-K(\eta)-O_\mathbf{a}(\log r)\\
		&\geq K_r^D(\mathbf{a},x)-K_{h_j,r}\left(\mathbf{a},x\middle|x,\sum_{i=0}^{d}a_ix^i\right)-\frac{4\epsilon}{\delta}(d+1)r-K(\epsilon,\eta)-O(\log r)\\
		&\geq K_r^D(\mathbf{a},x)-K_{h_j,r}\left(\mathbf{a},x\middle|x,\sum_{i=0}^{d}a_ix^i\right)-\frac{4\epsilon}{\alpha-\eta}(d+1)r-K(\epsilon,\eta)-O(\log r)\\
		&\geq K_r^D(\mathbf{a},x)-K_{h_j,r}\left(\mathbf{a},x\middle|x,\sum_{i=0}^{d}a_ix^i\right)-\frac{\gamma}{4}(d+1)r-\frac{\gamma}{8}(d+1)^2r\\
		&\geq K_r^D(\mathbf{a},x)-K_{h_j,r}\left(\mathbf{a},x\middle|x,\sum_{i=0}^{d}a_ix^i\right)-\frac{3\gamma}{8}(d+1)^2r.	\end{align*}
	By symmetry of information, we have, 
	\begin{align*}
		K_r^D(\mathbf{a},x)&=K_r^D(\mathbf{a})+K_{r,r}^D(x|\mathbf{a})-O(\log r)\\
		&=K_r^D(\mathbf{a})+K_{r,r}(x|\mathbf{a})-O(\log r),&\textit{ by \cref{lem:oraclebound}.1}\\
		&\geq\eta r+K_{r,r}(x|\mathbf{a})-O(\log r),&\textit{ by \cref{lem:oraclebound}.2}\\
		&\geq\eta r+sh_j+r-h_j-O(\log r)\\
		&\geq\alpha r-\frac{\gamma}{4}(d+1)^2r+sh_j+r-h_j-O(\log r)\\
		&\geq s(r-h_j)+\rho h_j-\frac{\gamma}{4}(d+1)^2r+sh_j+r-h_j-O(\log r),&\textit{ by definition of $\alpha$}\\
		&\geq(1+s)r-(1-\rho)h_j-\frac{\gamma}{2}(d+1)^2r.
	\end{align*}
	By \cref{lem:lessleft}, we have, 
	\begin{align*}
		K_{h_j,r}\left(\mathbf{a},x\middle|x,\sum_{i=0}^{d}a_ix^i\right)&\leq K_{h_j,h_j}\left(\mathbf{a},x\middle|x,\sum_{i=0}^{d}a_ix^i\right),&\textit{$\textit{since } r>h_j$}\\
		&=K_{h_j}(\mathbf{a},x)-K_{h_j}\left(x,\sum_{i=0}^{d}a_ix^i\right),&\textit{ by \cref{lem:soi}}\\
		&=K_{h_j}(\mathbf{a})+K_{h_j}(x|\mathbf{a})-K_{h_j}\left(x,\sum_{i=0}^{d}a_ix^i\right),&\textit{ by \cref{lem:soi}}\\
		&=K_{h_j}(\mathbf{a})+sh_j-K_{h_j}\left(x,\sum_{i=0}^{d}a_ix^i\right),&\textit{ by \cref{lem:lemma11}}\\
		&\leq K_{h_j}(\mathbf{a})+sh_j-\left(s+\rho-\frac{\gamma}{16}(d+1)^2\right)h_j,&\textit{ by \cref{lem:lessleft}}\\
		&\leq\rho h_j+\frac{h_j}{j}+sh_j-\left(s+\rho-\frac{\gamma}{16}(d+1)^2\right)h_j,&\textit{ definition of $h_j$}\\
		&\leq\frac{h_j}{j}+\frac{\gamma}{16}(d+1)^2r,&\textit{since }\textit{$h_j<r$}\\
		&\leq\frac{\gamma}{8}(d+1)^2r.&\textit{ for $j$ large}
	\end{align*}
	Hence, by combining the above results, we get,
	\begin{align*}
		K_r\left(x,\sum_{i=0}^{d}a_ix^i\right)&\geq K_r^D\left(\mathbf{a},x\right)-K_{h_j,r}\left(\mathbf{a},x\middle|x,\sum_{i=0}^{d}a_ix^i\right)-\frac{3\gamma}{8}(d+1)^2r\\
		&\geq(1+s)r-(1-\rho)h_j-\frac{\gamma}{2}(d+1)^2r-\frac{\gamma}{8}(d+1)^2r-\frac{3\gamma}{8}(d+1)^2r\\
		&=(1+s)r-(1-\rho)h_j-\gamma(d+1)^2r.
	\end{align*}
	Now, 
	\begin{align*}
		(1+s)r-(1-\rho)h_j-(s+\rho)r&=(1-\rho)r-(1-\rho)h_j\\
		&=(r-h_j)(1-\rho)\\
		&\geq0.&\textit { since $r\geq h_j$ and $\rho\leq1$}
	\end{align*}
	Therefore, 
	\begin{align*}
		K_r\left(x,\sum_{i=0}^{d}a_ix^i\right)&\geq\left(s+\rho-\gamma(d+1)^2\right)r.\qedhere
	\end{align*}
\end{proof}
\begin{proof}[Proof of \cref{lem:lessup}]
	\begin{align*}
		K_{h_j}\left(x,\sum_{i=0}^{d}a_ix^i\right)&\leq K_{h_j}(x,\mathbf{a})\\
		&=K_{h_j}(\mathbf{a})+K_{h_j}(x|\mathbf{a})\\
		&\leq\rho h_j+sh_j\qedhere
	\end{align*}
\end{proof}

\subsection{Error bounds for polynomial approximation}

\begin{proof}
  [Proof of \cref{lem:degree_d_approx}]
  If $a=b$, then the above bound holds trivially. Without loss of
  generality, assume that $|a|\ge|b|$. Then, we have
\begin{align*}
\left|a^k - b^k\right|
&< |a-b| \left|\left(\sum_{i=0}^{k-1}a^ib^{k-i-1}\right)\right|\\
&\le |a-b| \left(\sum_{i=0}^{k-1}\left|a^ib^{k-i-1}\right|\right)\\
&\le |a-b| \sum_{i=0}^{k-1} \left|a^{k-1}\right|\\
&\le 2^{-r} \sum_{i=0}^{k-1} \left|a^{k-1}\right|\\
&\le 2^{-r}k\times\max\{|a|,|a|^k\},
\end{align*}
where the third inequality follows since $|a|>|b|$. To establish the
third inequality, For the final inequality, it suffices to note
that if $|a|\le 1$, then $|a|\ge |a|^\ell$ for all $\ell \ge 1$,
otherwise, $|a|^\ell \le |a|^k$ for all $1 < \ell \le k$.
\end{proof}

The following lemma is used in upper bounding the number of non-root
points at which a polynomial assumes a small value. 

\begin{lemma}
  \label{lem:poly_bound}
  Let
  $p(x) = q(x) (x-\alpha)^m \prod_{\beta \in R(p)\setminus\{\alpha\}}
  (x-\beta)$ be a degree $d$ polynomial, where $R(p)$ is the set of
  real roots of $p$, $1 \le m \le d$and $q(x)$ is an irreducible
  polynomial over $\R$. Then, there is a rational constant $c_{p,q}$
  such that for all sufficiently large $r \in \mathbb{N}$, if
  $|y-\alpha|>2^{-r}$, then $|p(y)|>c_{p,q} 2^{-dr}$.
\end{lemma}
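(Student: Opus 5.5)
Write $p(x)=(x-\alpha)^m g(x)$, where $g(x)=q(x)\prod_{\beta\in R(p)\setminus\{\alpha\}}(x-\beta)$. The natural reading of the statement is that $y$ lies in a fixed bounded window around the real roots (the interval $(-\beta,\beta)$ from the Cauchy bound, as in \cref{alg:rootenum}), with $y$ at distance more than $2^{-r}$ from every real root of $p$, not only from $\alpha$. If $y$ were only far from $\alpha$, then $y$ could sit on another root $\beta$, where $p(y)=0$. So we read the hypothesis "$|y-\alpha|>2^{-r}$" as applying to each real root in turn. The plan is to bound each factor of $p(y)$ from below separately and multiply the bounds.

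\textbf{Step 1 (the irreducible factor).} Since $q$ is irreducible over $\R$ and $p$ has all its real roots accounted for, $q$ has no real zeros. It is therefore continuous and nonvanishing on the compact window $[-\beta-1,\beta+1]$, so $|q(y)|\ge m_q>0$ there, with $m_q$ depending only on $q$. If $q$ is a constant, it is its own lower bound.

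\textbf{Step 2 (the linear factors).} For each real root $\gamma$ with multiplicity $m_\gamma$, the hypothesis $|y-\gamma|>2^{-r}$ gives $|y-\gamma|^{m_\gamma}>2^{-m_\gamma r}$. The real-root multiplicities satisfy $\sum_\gamma m_\gamma\le d$. Multiplying over all real roots gives $\prod_\gamma |y-\gamma|^{m_\gamma}> 2^{-r\sum m_\gamma}\ge 2^{-dr}$, once $r\ge 0$ so that $2^{-r}\le 1$. In the statement's factorization only $\alpha$ carries a multiplicity, but the same count applies to any repeated real root.

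\textbf{Step 3 (combine).} Together, Steps 1 and 2 give $|p(y)|> m_q\,2^{-dr}$. Choose $c_{p,q}$ to be any positive rational below $m_q$, times the leading coefficient if $q$ is not taken monic. "Sufficiently large $r$" is used only to make $2^{-r}\le 1$, and to ensure the $2^{-r}$-neighbourhoods of distinct roots are disjoint, which is what the root-counting in \cref{lem:rootenum} needs.

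\textbf{Main obstacle.} The analytic content is routine. The main difficulty is the hypotheses: the bound is false unless $y$ is kept away from \emph{all} real roots and confined to a compact set. The first thing I would do is fix that formulation and check it matches how \cref{lem:rootenum} invokes the lemma. In that proof, the Sturm polynomials are evaluated only at grid points inside $(-\beta,\beta)$, and each small value is charged to a nearby root of that same polynomial, so the strengthened hypothesis is exactly what that argument uses.
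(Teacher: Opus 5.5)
Your proof is correct, but it repairs the statement differently from the paper and bounds the real-root factors differently. You are right that the lemma as literally stated fails as soon as $p$ has a second distinct real root.

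The paper's proof also strengthens the hypothesis, but it keeps $\alpha$ distinguished:
\begin{itemize}
\item it additionally assumes $|y-\beta|>\Delta/2$ for every other real root $\beta$, where $\Delta$ is the minimum separation of the real roots;
\item it takes $r$ large enough that $2^{-r}<\Delta/2$;
\item it bounds $\prod_{\beta\ne\alpha}|y-\beta|$ below by a constant depending on $\Delta$, so that only $|y-\alpha|^m>2^{-mr}$ carries the dependence on $r$;
\item it then multiplies by a positive lower bound for $|q|$ on all of $\R$.
\end{itemize}
You instead assume $|y-\gamma|>2^{-r}$ for every real root $\gamma$, charge each linear factor $2^{-r}$, and use $\sum_\gamma m_\gamma\le d$.

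The paper's localisation gives the finer exponent $mr$ near a root of multiplicity $m$. However, to reach the form that \cref{lem:rootenum} actually invokes (``if $|p(y)|\le c_p2^{-dr}$ then $y$ is within $2^{-r}$ of some root''), it would still need a case split between $y$ near some root and $y$ far from all of them. Your version is exactly that contrapositive. It needs neither the separation $\Delta$ nor large $r$, it allows arbitrary multiplicities for all real roots, and its only constant is a lower bound for $|q|$. In particular it avoids tracking the factor $(\Delta/2)^{k-m}$, which the paper bounds below by $\min\{1,\Delta/2\}$ when the correct bound is $\min\{1,\Delta/2\}^{k-m}$.

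One correction: confining $y$ to a compact window is unnecessary, so your claim that the bound fails without it is wrong. A polynomial $q$ with no real zeros satisfies $\inf_{x\in\R}|q(x)|>0$, because $|q(x)|\to\infty$ as $|x|\to\infty$ (or $q$ is a nonzero constant). This global infimum is what the paper uses, and your Step 2 holds for every $y\in\R$. Restricting to $[-\beta-1,\beta+1]$ does not hurt your proof, but it is not needed. Also, reusing $\beta$ for the Cauchy bound clashes with the statement's use of $\beta$ for roots.
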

\begin{proof}
  We first consider the term
  \begin{align*}
    p'(x) = (x-\alpha)^m \prod_{\beta \in R(p)\setminus\{\alpha\}}
    (x-\beta). 
  \end{align*}
  Let
  $\Delta = \min \{\beta_i - \beta_j \mid \beta_i \ne \beta_j,
  \beta_i, \beta_j \in R(p)\}$ be the minimum separation between
  distinct roots of $p$. Note that $q(x)$ does not have any real root,
  so the real roots of $p$ and $p'$ are the same. Assume that the
  degree of $p'$ is $k$, where $1 \le m \le k \le d$.

  Suppose $r$ is sufficiently large that $2^{-r}<\frac{\Delta}{2}$.
  Let $|x - \beta| > \frac{\Delta}{2}$ for all roots $\beta$ distinct
  from $\alpha$. We have
  \begin{align*}
    |p'(x)|
    &= |x-\alpha|^m \prod_{\beta \in R(p)\setminus\{\alpha\}} |x -
      \beta|\\
    &\ge |x-\alpha|^m \left(\frac{\Delta}{2}\right)^{k-m}\\
    &\ge |x-\alpha|^m \min\left\{1,\frac{\Delta}{2}\right\}\\
    &\ge 2^{-mr} \min\left\{1,\frac{\Delta}{2}\right\}\\
    &\ge 2^{-dr} \min\left\{1,\frac{\Delta}{2}\right\}.
  \end{align*}
  Now, the second term can be bounded by the coefficients of $p'$
  using well-known bounds. This bound can then be lower bounded by a
  positive rational. Hence there is a rational constant $c_{p'}$,
  which depends only on $p'$, and not on $r$ and $x$, such that
  \begin{align*}
    |p'(x)| \ge  c_{p'} 2^{-dr}.
  \end{align*}

  Now, consider the irreducible polynomial $q(x)$. Since it has no
  real roots, there is a minimum positive value or a maximum negative
  value that it attains. Let $c_q$ be a positive rational which is
  less than $\min_{x \in \R} |q(x)|$.

  Letting $c_p=c_{p'}c_q$, we get
  \begin{align*}
    |p(x)| \ge c_{p'} c_{q} 2^{-dr} = c_{p} 2^{-dr}.
  \end{align*}
\end{proof}
\end{document}